\newtheorem{tw}{Theorem}[subsection]
\newtheorem{lm}[tw]{Lemma}
\newtheorem{wn}[tw]{Corollary}
\newtheorem{stw}[tw]{Proposition}
\newenvironment{dow}{\it Proof.\rm}{\hfill $\Box$}
\theoremstyle{definition}
\newtheorem*{df}{Definition}
\newtheorem{uw}[tw]{Remark}
\newtheorem{prz}[tw]{Example}
\newcommand{\BN}{{\mathbb N}}
\newcommand{\BR}{{\mathbb R}}
\newcommand{\BX}{{\mathbb X}}
\newcommand{\FF}{{\mathcal{F}}}
\newcommand{\BB}{{\mathcal{B}}}
\newcommand{\DM}{{\mathcal{D}}}
\newcommand{\FD}{{\mathcal{FD}}}
\newcommand{\MM}{{\mathcal{M}}}
\newcommand{\TT}{{\mathcal{T}}}
\newcommand{\intt}{{\int_{t}^{T}}}
\newcommand{\into}{{\int_{0}^{T}}}
\newcommand{\intot}{{\int_{0}^{t}}}
\newcommand{\BRD}{{\mathbb{R}^{d}}}
\newcommand{\intz}{{\int_{t\wedge\zeta}^{T\wedge\zeta}}}
\newcommand{\intzt}{{\int_{t\wedge\tau\wedge\zeta}^{T\wedge\tau\wedge\zeta}}}
\newcommand{\VV}{{\mathcal{V}}}
\newcommand{\EE}{{\mathcal{E}}}
\newcommand{\sgn}{{\mbox{sgn}}}
\newcommand{\nsubsection}{\setcounter{equation}{0}\subsection}
\begin{document}

\title {Dirichlet forms and semilinear elliptic equations with measure data}
\author{Tomasz Klimsiak and Andrzej Rozkosz\footnote{Corresponding author.
Tel.: +48-56 611 2953; fax: +48-56 611 2987.}
\mbox{}\\[2mm]
{\small  Faculty of Mathematics and Computer Science,
Nicolaus Copernicus University},\\
{\small Chopina 12/18, 87-100 Toru\'n, Poland}}
\date{}
\maketitle
\begin{abstract}
We propose a probabilistic definition of solutions of semilinear
elliptic equations with (possibly nonlocal) operators associated
with regular Dirichlet forms and with measure data. Using the
theory of backward stochastic differential equations  we prove the
existence and uniqueness of solutions in the case where the
right-hand side of the equation is monotone and satisfies mild
integrability assumption, and the measure is smooth. We also study
regularity of solutions under the assumption that the measure is
smooth and has finite total variation. Some applications of our
general results are given.
\end{abstract}
{\em Keywords:} Semilinear elliptic equation, Measure data,
Dirichlet form, Backward sto\-chastic differential equation.
\medskip\\
{\em Mathematics Subject Classifications (2010):} Primary 35J61,
35R06; Secondary 60H30.

\footnotetext{{\em Email addresses:} tomas@mat.uni.torun.pl (T.
Klimsiak), rozkosz@mat.uni.torun.pl (A. Rozkosz). }
\nsubsection{Introduction}

Let $E$ be a locally compact separable metric space, $m$ be a
Borel measure on $E$ such that $\mbox{supp\,}[m]=E$, and let
$(\mathcal{E},D[\EE])$ be a regular Dirichlet form on
$L^{2}(E;m)$. Let $A$ denote the operator corresponding to
$(\mathcal{E},D[\EE])$, i.e. $A$ is a nonpositive self-adjoint
operator on $L^{2}(E;m)$ such that
\[
D(A)\subset D[\EE], \quad \EE(u,v)=(-Au,v),\,u\in D(A),v\in D[\EE]
\]
(see \cite{Fukushima}). In the present paper we investigate
semilinear elliptic equations of the form
\begin{equation}
\label{eq1.01} -Au=f(\cdot,u)+\mu,
\end{equation}
where $f:E\times\BR\rightarrow\BR$ is a measurable function and
$\mu$ is a smooth measure on $E$. Equations of the form
(\ref{eq1.01}) include semilinear equations for local operators
(the model example is the Laplace operator subject to the
Dirichlet or Neumann boundary conditions) as well as for nonlocal
operators (the model example is the fractional Laplacian).

There are many papers devoted to equations of the form
(\ref{eq1.01}) in case $A$ is an elliptic second-order operator in
divergence form and $\mu$ is a Radon measure (see, e.g.,
\cite{BBGGPV,BGO,DMOP,MP} and the references given there). One of
the main problems one encounters when considering such equations
is to give proper definition of solutions which ensures
uniqueness. To tackle this problem the so-called renormalized
solutions (see \cite{BBGGPV,DMOP,MP}) and entropy solutions (see
\cite{BGO}) were introduced. Roughly speaking, these solutions are
measurable functions whose truncates belong to the energy space,
which satisfy an estimate on the decay of their energy on sets
where their are large and satisfy (\ref{eq1.01}) in the
distributional sense for some wide class of test functions.

Our approach to (\ref{eq1.01}) is quite different. In the paper we
consider generalized probabilistic solutions of the problem
(\ref{eq1.01}). Let $S$ denote the class of all smooth measures on
$E$ (see Section \ref{sec4} for the definition; in particular
every soft measure (see \cite{DPP}) belongs to $S$).
 We
first prove that if $\mu\in S$ and $f$ is continuous and monotone
with respect to the second variable and satisfies some mild
integrability assumptions then the probabilistic solution of
(\ref{eq1.01}) exists and is unique in some class of functions
having weak regularity properties. Then we show that if $\mu$
belongs to the class $\MM_{0,b}$ of smooth measures of finite
total variation and the form $(\EE,D[\EE])$ is transient then the
solution has additional regularity properties.

To be more specific, let us denote by
$\BX=(\Omega,\FF,\FF_t,X,P_x)$ a Hunt process with life-time
$\zeta$ associated with the form $(\EE,D[\EE])$ and let $A^{\mu}$
denote the continuous additive functional of $\BX$ which is in the
Revuz correspondence with $\mu\in S$ (see \cite{Fukushima}). By a
probabilistic solution of (\ref{eq1.01}) we mean a
quasi-continuous function $u:E\rightarrow\BR$ such that
\begin{equation}
\label{eq1.02} u(x)=E_{x}\int_{0}^{\zeta}f(X_{t},u(X_{t}))\,dt
+E_{x}\int_{0}^{\zeta}dA^{\mu}_{t}
\end{equation}
for q.e. $x\in E$, i.e. $u$ satisfies the nonlinear Feynman-Kac
formula naturally associated with $(\EE,D[\EE])$ and $\mu,f$. In
the main theorem we prove that if $\mu\in S$, $f$ satisfy the
assumptions
\begin{enumerate}
\item[(A1)]$f:E\times\BR\rightarrow\BR$ is measurable and
$y\mapsto f(x,y)$ is continuous for every $x\in E$,
\item [(A2)]$(f(x,y_{1})-f(x,y_{2}))(y_{1}-y_{2})\le 0$ for every
$y_{1}, y_{2}\in \BR$ and $x\in E$,
\item [(A3$'$)]for every $r>0$ the function
$F_r(x)=\sup_{|y|\le r}|f(x,y)|$, $x\in E$, is quasi-$L^{1}$ with
respect to $(\EE,D[\EE])$, i.e. $t\mapsto F_r(X_t)$ belongs to
$L^1_{loc}(\BR_+)$ $P_x$-a.s. for q.e. $x\in E$,
\item [(A4$'$)]$E_{x}\int_{0}^{\zeta}|f(X_{t},0)|\,dt<\infty$,
$E_{x}\int_{0}^{\zeta}d|A^{\mu}|_{t}<\infty$ for $m$-a.e. $x\in
E$,
\end{enumerate}
then there exists a unique solution of (\ref{eq1.02}) in the class
of quasi-continuous functions $u:E\rightarrow\BR$ such that the
process $t\mapsto u(X_t)$ is of Doob's class (D) under the measure
$P_x$ for q.e. $x\in E$. Moreover, for every $q\in(0,1)$,
$E_x\sup_{t\ge0}|u(X_t)|^q<\infty$ for q.e. $x\in E$. We also show
that (A3$'$) is implied by (A3) and if $(\EE,D[\EE])$ is transient
then (A4$'$) is implied by (A4), where
\begin{enumerate}
\item[(A3)]$F_r\in L^1(E;m)$,
\item [(A4)]$f(\cdot,0)\in L^1(E;m)$, $\mu\in\MM_{0,b}$.
\end{enumerate}

Let us remark that (A3$'$), (A4$'$) are the minimal conditions
which make it possible to define solutions of (\ref{eq1.01}) by
(\ref{eq1.02}). Conditions (A1)--(A4) are widely used in
$L^1$-theory of nonlinear elliptic equations (see, e.g.,
\cite{BBGGPV}).

We have already mentioned that transiency of $(\EE,D[\EE])$ and
additional assumptions on  $\mu$ imply better regularity
properties of the solution of (\ref{eq1.02}). Namely, for
transient forms, if $u$ is a solution of (\ref{eq1.02}) with
$\mu,f$ satisfying (A4) then $f_u\in L^1(E;m)$ and
\[
\|f_{u}\|_{L^{1}(E;m)}
\le\|f(\cdot,0)\|_{L^{1}(E;m)}+\|\mu\|_{TV},
\]
where $\|\mu\|_{TV}$ is the total variation norm of $\mu$.
Moreover, for every $k>0$ the truncation of $u$ defined by
$T_k(u)=\min\{k,\max\{-k,u\}\}$ belongs to the extended Dirichlet
space $\FF_e$ of $(\EE,D[\EE])$ and
\[
\EE(T_{k}(u),T_{k}(u))\le k(\|f_{u}\|_{L^{1}(E;m)}+\|\mu\|_{TV})
\]
as well as
\[
\EE(\Phi_{k}(u),\Phi_{k}(u))\le\int_{\{|u|\ge k\}}
|f_{u}(x)|\,m(dx) +\int_{\{|u|\ge k\}}\,d|\mu|,
\]
where $\Phi_{k}(u)=T_{1}(u-T_{k}(u))$.
These estimates are analogues of energy estimates for renormalized
solutions. Up to now they were known for some classes of local
operators (see, e.g., \cite{BBGGPV}). In general, $u$ is even not
locally integrable. We show that nevertheless $u\in L^1(E;m)$ in
many interesting situations.

Another remarkable feature of  probabilistic solutions in the
transient case is that for $\mu\in\MM_{0,b}$ they can be defined
in purely analytic way, which resembles Stampacchia's way to
defining solutions. Let $S^{(0)}_0$ denote the set of nonnegative
Radon measures on $E$ of finite $0$-order energy integral and let
$S^{(0)}_{00}$ be the subset of $S^{(0)}_0$ consisting of finite
measures $\mu$ such that $\|U\mu\|_{\infty}<\infty$, where $U\mu$
is the (0-order) potential of $\mu$ (see \cite{Fukushima}). We
show that if $\mu\in\MM_{0,b}$, $u$ is quasi-continuous and
$f(\cdot,u)\in L^1(E;m)$ then $u$ is a probabilistic solution of
(\ref{eq1.01}) if and only if $u$ is a solution of (\ref{eq1.01})
in the sense of duality, i.e.
$|\langle\nu,u\rangle|=|\int_Eu\,d\nu|<\infty$ for every $\nu\in
S^{(0)}_{00}$ and
\[
\langle\nu,u\rangle=(f(\cdot,u),U\nu)_{L^{2}(E;m)}+\langle\mu,U\nu\rangle,
\quad \nu\in S^{(0)}_{00},
\]
If, in addition, $\mu\in S^{(0)}_0$ and $f(\cdot,u)\cdot m\in
S^{(0)}_0$, then $u$ is a weak solution of (\ref{eq1.01}) in the
sense that $u$ belongs to the extended Dirichlet space $\FF_e$ and
\[
\EE(u,v)=(f(\cdot,u),v)_{L^2(E;m)}+\langle v,\mu\rangle,\quad
v\in\FF_e.
\]

To apply our general results to concrete operator, one has to
check that the form corresponding to it is a regular Dirichlet
form and, to get better regularity of solutions, that the form is
transient. In the paper we recall two classical examples of local
and nonlocal operators associated with such forms, namely
divergence form operators and L\'evy diffusion generators. In the
latter case our results lead to theorems on existence, uniqueness
and regularity of equations of the form
\[
-\psi(\nabla)u=f(\cdot,u)+\mu, \quad u_{|D^{c}}=0,
\]
where $D$ is an open subset of $\BR^d$ and $\psi$ is the
L\'evy-Khintchine symbol of some symmetric convolution semigroup
of measures on $\BR^d$. These theorems are new in the theory of
semilinear equations with measure data. Note, however, that linear
equations with fractional Laplacian and bounded smooth measure on
the right-hand side are considered in \cite{KarlsenPetittaUlusoy}.
The first example is provided mainly to illustrate that our
approach allows one to treat in a unified way many interesting
operators. It should be stressed, however, that even in the case
of divergence form operators our results are new, because
probabilistic approach enables us treat equations with measures
which are not necessarily Radon measures. To our knowledge, our
results for equations with Radon measures and possibly
degenerating operator are also new. Some other possible
applications of the main results of the paper are briefly
indicated in Section \ref{sec6}.

Our proof of the main result on existence and uniqueness of
solutions of (\ref{eq1.02}) is probabilistic in nature. The idea
is as follows. First we show  that there exists a progressively
measurable process $Y$ of class (D) and a martingale $M$ such that
$Y_{T\wedge\zeta}\rightarrow0$ as $T\rightarrow+\infty$ and for
every $T>0$,
\begin{equation}
\label{eq1.03}
Y_t=Y_{T\wedge\zeta}+\int^{T\wedge\zeta}_{t\wedge\zeta}f(X_s,Y_s)\,ds
+\int^{T\wedge\zeta}_{t\wedge\zeta}dA^{\mu}_s
-\int^{T\wedge\zeta}_{t\wedge\zeta}dM_s,\quad t\in[0,T].
\end{equation}
Then we set
\begin{equation}
\label{eq1.04} u(x)=E_xY_0,\quad x\in E
\end{equation}
and show that $u$ is quasi-continuous. Finally, using the Markov
property we show that $u(X_t)=Y_t$, $t\ge0$, $P_x$-a.s. for q.e.
$x\in E$, which leads to (\ref{eq1.02}). Let us point out that
(\ref{eq1.04}) means that the solution $u$ of (\ref{eq1.02}) is
given by the first component of the solution $(Y,M)$ of the
backward stochastic differential equation (\ref{eq1.03}). This
representation is useful. For instance, it allows one to prove
easily the comparison theorem for solutions of (\ref{eq1.01}) and
show that the solutions have some integrability properties.

The rest of the paper is organized as follows. In Sections
\ref{sec2} and \ref{sec3} we prove  theorems on existence,
uniqueness and comparison of $L^p$-solutions of some general
(non-Markovian) backward stochastic differential equations
(BSDEs). In Section \ref{sec4} we prove our main result on
existence and uniqueness of probabilistic solutions of
(\ref{eq1.01}) in case $\mu\in S$. In Section \ref{sec5} we
investigate regularity of probabilistic solutions of (\ref{eq1.1})
under the additional assumptions that $\EE$ is transient and
$\mu\in\MM_{0,b}$.  In Section \ref{sec6} some applications of
general theorems proved in Sections \ref{sec4} and {\ref{sec5} are
given.

\nsubsection{Generalized BSDEs with constant terminal time}
\label{sec2}

We assume as given a complete probability space $(\Omega,\FF,P)$
equipped with a complete right continuous filtration $\{\FF_{t},
t\ge 0\}$.

$\mathcal{S}$ (resp. $\DM$) is the space of all progressively
measurable continuous (resp. c\`adl\`ag) processes.
$\mathcal{S}^{p}$ (resp. $\DM^{p}$), $p>0$, is  the space of all
processes $X\in\mathcal{S}$ (resp. $X\in\DM$) such that
\[
E\sup_{t\ge 0}|X_{t}|^{p}<\infty.
\]

$\MM$ (resp. $\MM_{loc}$) is the space of all c\`adl\`ag
martingales (resp. c\`adl\`ag local martingales) and $\MM^{p}$,
$p>0$, is the subspace of $\MM$ consisting of all martingales such
that $E([M]_{\infty})^{p/2}<\infty$.

$\mathcal{V}$ is the space of all c\`adl\`ag  progressively
measurable processes of finite variation such that $V_{0}=0$. If
$V\in\mathcal{V}$ then by $|V|_t$ we denote the variation of $V$
on $[0,t]$ and by $dV$ the random measure generated by the
trajectories of $V$.

By $\mathcal{T}$ we denote the set of all finite stopping times
and by $\mathcal{T}_t$ the set of all stopping times with values
in $[0,t]$. We recall that a c\`adl\`ag adapted process $Y$ is
said to be of class (D) if the collection
$\{Y_{\tau},\tau\in\TT\}$ is uniformly integrable. For a process
$Y$ of class (D) we set
\[
\|Y\|_{1}=\sup\{E|Y_{\tau}|,\tau\in\mathcal{T}\}.
\]

For a  process $X\in\DM$ we set $X_{t-}= \lim_{s\nearrow t} X_{s}$
and $\Delta X_{t}= X_{t}-X_{t-}$ with the convention that
$X_{0-}=0$. Let $\{X^{n}\}\subset \DM$, $X\in\DM$. We say that
$X^{n}\rightarrow X$ in ucp (uniformly on compacts in probability)
if $\sup_{t\in [0,T]}|X^{n}_{t}-X_{t}|\rightarrow 0$ in
probability $P$ for every $T>0$.

In the whole paper all equalities and inequalities and other
relations between random elements are understood to hold $P$-a.s.
To avoid ambiguity we stress that writing $X_{t}=Y_{t},\, t\in
[0,T]$ we mean that $X_{t}=Y_{t}$, $t\in [0,T]$, $P$-a.s. whereas
writing $X_{t}=Y_{t}$ for a.e. (resp. for every) $t\in [0,T]$ we
mean that $X_{t}=Y_{t}$, $P$-a.s. for a.e. (resp. for every) $t\in
[0,T]$. We also adopt the convention that
$\int_{a}^{b}=\int_{(a,b]}$.

$T_{k}(x)=\min\{k,\max\{-k,x\}\}$, $x\in\BR$. $x^{+}=\max\{x,0\}$,
$x^{-}=\max\{-x,0\}$ and
\[
\hat x=\hat{\mbox{sgn}}(x),\quad
\hat{\mbox{sgn}}(x)=\mathbf{1}_{x\neq 0}\frac{x}{|x|},\quad
x\in\mathbb{R}^d.
\]

\begin{df}
Let $\xi\in\FF_{T}$, $V\in \mathcal{V}$ and let
$f:[0,T]\times\Omega\times\mathbb{R}\rightarrow \mathbb{R}$ be a
function such that $f(\cdot,y)$ is progressively measurable for
every $y\in\mathbb{R}$. We say that a pair $(Y,M)$ is a solution
of BSDE$(\xi,f+dV)$ on $[0,T]$ if $Y\in\DM,\, M\in
\mathcal{M}_{loc}$, $t\mapsto f(t,Y_{t})\in L^{1}(0,T)$ and
\begin{equation}
\label{eq2.14} Y_{t}=\xi+\intt f(s,Y_{s})\,ds+\intt dV_{s}-\intt
dM_{s},\quad t\in [0,T].
\end{equation}
\end{df}

We will need the following hypotheses.
\begin{enumerate}
\item [(H1)] For every $t\in [0,T]$ the mapping
$\mathbb{R}\ni y\mapsto f(t,y)$ is continuous.
\item[(H2)]$(f(t,y)-f(t,y'))(y-y')\le 0$ for every $t\ge0$, $y,y'\in\BR$.
\item[(H3)] For every $r>0$ the mapping
$[0,T]\ni t\mapsto\sup_{|y|\le r}|f(t,y)-f(t,0)|$ belongs to
$L^{1}(0,T)$.
\item[(H4)]$E|\xi|^{p}+E(\int_{0}^{T}|f(t,0)|\,dt)^{p}+E(\into
d|V|_{t})^{p}<\infty$.
\item[(A)]There exists a nonnegative progressively measurable
process $\{f_{t}\}$ such that
\[
\forall (t,y)\in [0,T]\times\mathbb{R},
\quad \hat{y}f(t,y)\le f_{t}.
\]
\end{enumerate}

Uniqueness of solutions of (\ref{eq2.14}) follows from the
following comparison result.
\begin{stw}
\label{lm0.1}  Let $(Y^{1},M^{1}), (Y^{2},M^{2})$ be solutions  of
BSDE$(\xi^{1},f^{1}+dV^{1})$ and  BSDE$(\xi^{2},f^{2}+dV^{2})$,
respectively, such that $Y^{1}, Y^{2}$ are of class \mbox{\rm(D)}.
Assume that $\xi^{1}\le \xi^{2}$, $dV^{1}\le dV^{2}$ and that
\begin{equation}
\label{eq2.01} f^{2}\mbox{ satisfies {\rm(H2)} and
}f^{1}(t,Y^{1}_{t})\le f^{2}(t,Y^{1}_{t})\mbox{ for a.e. }t\in
[0,T]
\end{equation}
or
\begin{equation}
\label{eq2.02} f^{1}\mbox{ satisfies {\rm(H2)} and
}f^{1}(t,Y^{2}_{t})\le f^{2}(t,Y^{2}_{t})\mbox{ for a.e. }t\in
[0,T].
\end{equation}
Then
\[
Y^{1}_{t}\le Y^{2}_{t},\quad t\in [0,T].
\]
\end{stw}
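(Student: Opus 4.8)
The plan is to reduce the comparison to showing that $(\hat Y_{t})^{+}\equiv 0$ for $\hat Y:=Y^{1}-Y^{2}$, by applying the Tanaka--Meyer (Meyer--It\^o) formula for the convex function $x\mapsto x^{+}$ and exploiting the sign conditions (H2), (\ref{eq2.01})/(\ref{eq2.02}), $\xi^{1}\le\xi^{2}$, $dV^{1}\le dV^{2}$, together with the class (D) assumption, which will be needed only at the very end.

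First I would put $\hat Y=Y^{1}-Y^{2}$, $\hat M=M^{1}-M^{2}$, $\hat V=V^{1}-V^{2}$; then $\hat Y$ is a c\`adl\`ag semimartingale of class (D), $\hat M\in\MM_{loc}$, $\hat V$ is nonincreasing, $(\hat Y_{T})^{+}=(\xi^{1}-\xi^{2})^{+}=0$, and subtracting the two equations gives, for $0\le t\le u\le T$,
\[
\hat Y_{t}=\hat Y_{u}+\int_{(t,u]}\bigl(f^{1}(s,Y^{1}_{s})-f^{2}(s,Y^{2}_{s})\bigr)\,ds+\int_{(t,u]}d\hat V_{s}-\int_{(t,u]}d\hat M_{s}.
\]
Applying the Meyer--It\^o formula to $(\hat Y_{t})^{+}$ on $[t,u]$, the local time of $\hat Y$ at $0$ and the jump corrections are nonnegative, $\int_{(t,u]}\mathbf{1}_{\{\hat Y_{s-}>0\}}\,d\hat V_{s}\le 0$ because $d\hat V\le 0$, and the crucial observation is that the drift contribution $\int_{(t,u]}\mathbf{1}_{\{\hat Y_{s-}>0\}}(f^{1}(s,Y^{1}_{s})-f^{2}(s,Y^{2}_{s}))\,ds$ is nonpositive: $ds$-a.e.\ the indicator equals $\mathbf{1}_{\{Y^{1}_{s}>Y^{2}_{s}\}}$, and on $\{Y^{1}_{s}>Y^{2}_{s}\}$ one writes $f^{1}(s,Y^{1}_{s})-f^{2}(s,Y^{2}_{s})=(f^{1}(s,Y^{1}_{s})-f^{2}(s,Y^{1}_{s}))+(f^{2}(s,Y^{1}_{s})-f^{2}(s,Y^{2}_{s}))\le 0$ by (\ref{eq2.01}) and the monotonicity (H2) of $f^{2}$ (under (\ref{eq2.02}) one splits the other way, using (H2) for $f^{1}$ and $f^{1}(s,Y^{2}_{s})\le f^{2}(s,Y^{2}_{s})$). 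Since $-\int_{(t,u]}\mathbf{1}_{\{\hat Y_{s-}>0\}}\,d\hat Y_{s}$ equals $\int_{(t,u]}\mathbf{1}_{\{\hat Y_{s-}>0\}}((f^{1}(s,Y^{1}_{s})-f^{2}(s,Y^{2}_{s}))\,ds+d\hat V_{s}-d\hat M_{s})$, this should give, with $N_{t}:=\int_{0}^{t}\mathbf{1}_{\{\hat Y_{s-}>0\}}\,d\hat M_{s}\in\MM_{loc}$,
\[
(\hat Y_{t})^{+}\le(\hat Y_{u})^{+}+N_{t}-N_{u},\qquad 0\le t\le u\le T,
\]
and, taking $u=T$, an exact identity $(\hat Y_{t})^{+}+D_{t}=N_{t}-N_{T}$ in which $D_{t}\ge 0$ collects the omitted nonnegative terms on $(t,T]$; in particular $N_{T}\le-(\hat Y_{0})^{+}\le 0$.

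The hard part will be passing to expectations, since $N$ is a priori only a local martingale (the data are not assumed integrable enough to make $M^{1}-M^{2}$ a genuine martingale). Here I would argue that the displayed inequality makes $t\mapsto(\hat Y_{t})^{+}-N_{t}$ nondecreasing, hence $N_{t}\le(\hat Y_{t})^{+}$, so that $N^{+}$ is dominated by the class (D) process $(\hat Y)^{+}$; a localization of $N$ combined with the conditional Fatou lemma then shows that a local martingale whose positive part is of class (D) must be a submartingale, so $N$ is a submartingale. Then $0=EN_{0}\le EN_{t}\le EN_{T}\le-E(\hat Y_{0})^{+}\le 0$ forces $EN_{t}=0$ for every $t\in[0,T]$, and taking expectations in $(\hat Y_{t})^{+}+D_{t}=N_{t}-N_{T}$ gives $E(\hat Y_{t})^{+}+ED_{t}=0$, whence $E(\hat Y_{t})^{+}=0$, i.e.\ $\hat Y_{t}\le 0$ a.s., for each $t\in[0,T]$. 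Right-continuity of $\hat Y$ then yields $Y^{1}_{t}\le Y^{2}_{t}$ for all $t\in[0,T]$ simultaneously. The two points requiring care are the sign bookkeeping in the Meyer--It\^o expansion (including the jump terms) and this submartingale step, which is the only place where the class (D) hypothesis is genuinely used.
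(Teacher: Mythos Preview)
Your proof is correct and follows the same core strategy as the paper: apply the It\^o--Tanaka formula to $(Y^{1}-Y^{2})^{+}$, use (H2) together with (\ref{eq2.01}) or (\ref{eq2.02}) and $dV^{1}\le dV^{2}$ to discard the finite-variation contributions, and invoke class (D) to control the remaining local-martingale term. The only difference is in how that last step is carried out. You labour to show that $N=\int_{0}^{\cdot}\mathbf{1}_{\{\hat Y_{s-}>0\}}\,d\hat M_{s}$ is a genuine submartingale (via the domination $N^{+}\le(\hat Y)^{+}$ and conditional Fatou) and then squeeze $EN_{t}$ between $0$ and $EN_{T}\le 0$. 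The paper's route is more direct: take a localizing sequence $\{\tau_{k}\}$ for $M^{1}-M^{2}$, so that the stochastic integral in the Tanaka inequality has zero expectation, obtain
\[
E\bigl(Y^{1}_{t\wedge\tau_{k}}-Y^{2}_{t\wedge\tau_{k}}\bigr)^{+}\le E\bigl(Y^{1}_{\tau_{k}}-Y^{2}_{\tau_{k}}\bigr)^{+},
\]
and let $k\to\infty$ using class (D) on both sides (the right-hand side tends to $E(\xi^{1}-\xi^{2})^{+}=0$). This avoids the auxiliary submartingale claim entirely; what you flagged as the ``hard part'' is in fact the standard one-line localization argument.
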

\begin{dow}
We give the proof in  case (\ref{eq2.01}) is satisfied. In case
(\ref{eq2.02}) is satisfied the proof is analogous and hence left
to the reader. Let $\tau\in\mathcal{T}_{T}$. By the It\^o-Tanaka
formula,
\begin{align*}
(Y^{1}_{t\wedge\tau}-Y^{2}_{t\wedge\tau})^{+}
&\le (Y^{1}_{\tau}-Y^{2}_{\tau})^{+}
+\int_{t\wedge\tau}^{\tau}
\mathbf{1}_{\{Y^{1}_{s-}>Y^{2}_{s-}\}}
(f^{1}(s,Y^{1}_{s})-f^{2}(s,Y^{2}_{s}))\,ds\\
&\quad+\int_{t\wedge\tau}^{\tau}
\mathbf{1}_{\{Y^{1}_{s-}>Y^{2}_{s-}\}}\,d(V^{1}_{s}-V^{2}_{s})
-\int_{t\wedge\tau}^{\tau}\mathbf{1}_{\{Y^{1}_{s-}>Y^{2}_{s-}\}}
\,d(M^{1}_{s}-M^{2}_{s}).
\end{align*}
From the above and the assumptions,
\[
(Y^{1}_{t\wedge\tau}-Y^{2}_{t\wedge\tau})^{+} \le
(Y^{1}_{\tau}-Y^{2}_{\tau})^{+}
-\int_{t\wedge\tau}^{\tau}\mathbf{1}_{\{Y^{1}_{s-}>Y^{2}_{s-}\}}\,
d(M^{1}_{s}-M^{2}_{s}),\quad t\in [0,T].
\]
Let $\{\tau_{k}\}$ be a fundamental sequence for the local
martingale $M^{1}-M^{2}$. Since $Y^{1},Y^{2}$ are of class (D),
taking expectation of both sides of the above inequality with
$\tau$ replaced by $\tau_{k}$ and then letting
$k\rightarrow\infty$ show that $E(Y^{1}_{t}-Y^{2}_{t})^{+}\le 0$,
$t\in[0,T]$. This proves the proposition since $Y^{1}, Y^{2}$ are
c\`adl\`ag processes.
\end{dow}

\begin{wn}
\label{cor2.2} Assume \mbox{\rm(H2)}. Then there exists at most
one solution $(Y,M)$ of BSDE$(\xi,f+dV)$ such that $Y$ is of class
\mbox{\rm(D)}.
\end{wn}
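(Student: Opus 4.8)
The plan is to deduce the corollary directly from the comparison result (Proposition \ref{lm0.1}). Suppose $(Y^{1},M^{1})$ and $(Y^{2},M^{2})$ are two solutions of BSDE$(\xi,f+dV)$ on $[0,T]$ with $Y^{1},Y^{2}$ of class (D). First I would apply Proposition \ref{lm0.1} with $\xi^{1}=\xi^{2}=\xi$, $f^{1}=f^{2}=f$ and $V^{1}=V^{2}=V$. The hypotheses are met trivially: $\xi^{1}\le\xi^{2}$ and $dV^{1}\le dV^{2}$ hold with equality, $f^{2}=f$ satisfies (H2) by assumption, and $f^{1}(t,Y^{1}_{t})=f^{2}(t,Y^{1}_{t})$ for every $t$, so condition (\ref{eq2.01}) is satisfied. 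Hence $Y^{1}_{t}\le Y^{2}_{t}$ for $t\in[0,T]$.

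Next I would apply Proposition \ref{lm0.1} once more, but now with $(Y^{2},M^{2})$ playing the role of the first solution and $(Y^{1},M^{1})$ the role of the second (and still $\xi^{1}=\xi^{2}=\xi$, $f^{1}=f^{2}=f$, $V^{1}=V^{2}=V$). Exactly as before all the assumptions, in particular (\ref{eq2.01}), hold, so this time we obtain $Y^{2}_{t}\le Y^{1}_{t}$ for $t\in[0,T]$. Combining the two inequalities gives $Y^{1}_{t}=Y^{2}_{t}$ for all $t\in[0,T]$, $P$-a.s.

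It then remains to identify the martingale parts. Writing $Y:=Y^{1}=Y^{2}$ and rearranging (\ref{eq2.14}) yields
\[
\int_{t}^{T}dM^{i}_{s}=\xi+\int_{t}^{T}f(s,Y_{s})\,ds+\int_{t}^{T}dV_{s}-Y_{t},\qquad i=1,2,
\]
and the right-hand side does not depend on $i$. Hence $M^{1}_{T}-M^{1}_{t}=M^{2}_{T}-M^{2}_{t}$ for every $t\in[0,T]$, so $M^{1}-M^{2}$ is constant in time and therefore, under the usual normalization $M_{0}=0$, $M^{1}=M^{2}$. I do not anticipate any genuine obstacle: the whole argument is merely a two-fold application of the comparison theorem, and the only point requiring a little attention is checking that the hypotheses of Proposition \ref{lm0.1} really are symmetric in the two solutions, together with the minor bookkeeping needed to pin down the martingale component.
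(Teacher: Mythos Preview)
Your argument is correct and is precisely the intended derivation: the paper states Corollary \ref{cor2.2} immediately after Proposition \ref{lm0.1} without proof, and the two-fold application of the comparison result you describe is the standard way to extract uniqueness from it. The only minor remark is that the definition of a solution does not explicitly normalize $M_{0}=0$, so uniqueness of $M$ holds up to an additive constant, exactly as you note.
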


The following a priori estimates will be needed in the proof of
existence of solutions of (\ref{eq2.14}).
\begin{lm}
\label{lm0.2} Let $p>0$ and let $(Y,M)$ be a solution of
BSDE$(\xi,f+dV)$ such that $(Y,M)\in\DM^{p}\otimes\MM^{p}$ if
$p\neq1$ and $Y$ is of class \mbox{\rm(D)}, $M\in\MM_{loc}$ if
$p=1$. If \mbox{\rm (H2), (H4)} are satisfied then
\begin{align*}
E(\int_{0}^{T}|f(t,Y_{t})|\,dt)^{p} \le
c_{p}E\left(|\xi|^{p}+(\into|f(t,0)|\,dt)^{p} +(\into
d|V|_{t})^{p}+\mathbf{1}_{\{p\neq1\}}[M]^{p/2}_{T}\right).
\end{align*}
\end{lm}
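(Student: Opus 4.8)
The plan is to start from the BSDE \eqref{eq2.14} written on $[t,T]$ and solve for the "forcing" term $\intt f(s,Y_s)\,ds$ in terms of $Y_t$, $\xi$, $\intt dV_s$ and $\intt dM_s$. The key device is the sign condition coming from monotonicity: by (H2) we have $(f(s,Y_s)-f(s,0))\,\hat Y_s\le 0$, hence $\hat Y_s f(s,Y_s)\le |f(s,0)|$, so multiplying the local dynamics by $\hat Y_s$ (or using the It\^o--Tanaka formula applied to $|Y|$) yields
\[
|Y_{t}|\le|\xi|+\intt|f(s,0)|\,ds+\intt d|V|_s-\intt \hat Y_{s-}\,dM_s
-\intt f(s,Y_s)\hat Y_s\,ds+\intt f(s,0)\hat Y_s\,ds,
\]
and a second, complementary inequality obtained by keeping the term $-\intt f(s,Y_s)\hat Y_s\,ds\ge 0$ on the left. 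Roughly: $\intt (-f(s,Y_s)\hat Y_s)\,ds\le |\xi|+2\intt|f(s,0)|\,ds+\intt d|V|_s + \intt \hat Y_{s-}\,dM_s$. Since $-f(s,Y_s)\hat Y_s=|f(s,Y_s)|-(|f(s,Y_s)|+f(s,Y_s)\hat Y_s)$ and $|f(s,Y_s)|+f(s,Y_s)\hat Y_s\le 2|f(s,0)|$ again by monotonicity, one controls $\intt|f(s,Y_s)|\,ds$ by $|\xi|$, $\intt|f(s,0)|\,ds$, $\intt d|V|_s$ and the stochastic integral term.

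Next I would raise everything to the power $p$ and take expectations, treating the cases $p\ge 1$ and $0<p<1$ separately. For $p\ge 1$ use the convexity inequality $(\sum a_i)^p\le c_p\sum a_i^p$; for $0<p<1$ use subadditivity $(\sum a_i)^p\le\sum a_i^p$. The only term not already on the right-hand side of the claimed estimate is the stochastic integral $N_t:=\intt \hat Y_{s-}\,dM_s$ (a local martingale, since $|\hat Y_{s-}|\le 1$). Here the argument bifurcates exactly as the hypotheses of the lemma suggest. If $p\ne 1$: by the Burkholder--Davis--Gundy inequality, $E\sup_t|N_t|^p\le c_p E[N]_T^{p/2}=c_p E\big(\int_0^T\hat Y_{s-}^2\,d[M]_s\big)^{p/2}\le c_p E[M]_T^{p/2}$, which is exactly the term $\mathbf 1_{\{p\ne1\}}[M]_T^{p/2}$ appearing in the statement; this is why the hypothesis $M\in\MM^p$ (equivalently $E[M]_T^{p/2}<\infty$) is imposed in that case. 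If $p=1$: we are not allowed to invoke BDG (we only assume $M\in\MM_{loc}$ and $Y$ of class (D)), so instead localize. Let $\{\tau_k\}$ be a fundamental sequence for $N$ (or for $M$); then $E N_{t\wedge\tau_k}=0$, and from the displayed inequality at the stopping time $\tau_k$ one gets $E\int_{t\wedge\tau_k}^{T}|f(s,Y_s)|\,ds\le E(|\xi|+2\int_0^T|f(s,0)|\,ds+\int_0^T d|V|_s)$ plus a boundary term $E|Y_{t\wedge\tau_k}|$ or $E|Y_{T}|$ that is handled by the class (D) property; letting $k\to\infty$ and applying Fatou (or monotone convergence in $s$) removes the localization and yields the estimate with no $[M]$ term.

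The main obstacle, and the place requiring care, is the $p=1$ case: one must pass from the stopped inequality to the unstopped one using only uniform integrability of $\{Y_\tau\}$ and Fatou's lemma, making sure the negative (good) term $\int (-f(s,Y_s)\hat Y_s)\,ds\ge 0$ is kept on the correct side so that monotone convergence applies as $\tau_k\uparrow\infty$, and that the stochastic-integral term genuinely vanishes in expectation along the fundamental sequence despite the lack of $L^p$ control on $M$. A secondary technical point is the correct bookkeeping in the It\^o--Tanaka formula for $|Y|$ when $Y$ has jumps: the jump part of $|Y|$ contributes a term of the form $\sum_{t<s\le T}(|Y_s|-|Y_{s-}|-\hat Y_{s-}\Delta Y_s)$, which is nonnegative and can be absorbed (or, more simply, one replaces $|Y|$ by a smooth approximation $\phi_\varepsilon(Y)$, applies It\^o, and passes to the limit). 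Once these two points are dispatched, collecting terms and relabeling the constant $c_p$ gives the claim.
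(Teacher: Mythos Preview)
Your plan is correct and follows essentially the same route as the paper: apply the It\^o--Tanaka formula to $|Y|$ on $[0,\tau]$, use (H2) to turn $-\hat Y_s f(s,Y_s)$ into $|f(s,Y_s)|$ up to a $|f(s,0)|$ correction, and then split into $p\neq1$ (BDG on $\int\hat Y_{s-}\,dM_s$, bounded by $[M]_T^{p/2}$) versus $p=1$ (localize and use class~(D)). The paper does exactly this, only more tersely --- it writes the key pathwise inequality and then says ``from which one can easily deduce the desired inequality''.

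Two small points worth tightening in your write-up. First, the sentence ``keeping the term $-\intt f(s,Y_s)\hat Y_s\,ds\ge0$ on the left'' is not literally correct: $-\hat Y_s f(s,Y_s)$ need not be nonnegative. What is nonnegative is $-\hat Y_s(f(s,Y_s)-f(s,0))$, and your subsequent decomposition $|f(s,Y_s)|\le -\hat Y_s f(s,Y_s)+2|f(s,0)|$ is the right way to phrase it. Second, in the $p=1$ endgame you should note that a fundamental sequence $\{\tau_k\}$ for $N=\int\hat Y_{s-}\,dM_s$ on $[0,T]$ can be taken so that $\tau_k=T$ eventually a.s.\ (e.g.\ $\tau_k=\inf\{t:[N]_t\ge k\}\wedge T$, using $[N]_T\le[M]_T<\infty$ a.s.), so that $Y_{\tau_k}\to Y_T=\xi$ a.s.\ and class~(D) gives $E|Y_{\tau_k}|\to E|\xi|$; this is how the boundary term becomes $E|\xi|$ rather than merely $\|Y\|_1$.
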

\begin{dow}
Let $\tau\in\TT_{T}$. By the It\^o-Tanaka formula,
\begin{equation}
\label{eq2.03}
-\int_{0}^{\tau}\hat{\mbox{sgn}}(Y_{t})f(t,Y_{t})\,dt \le
|Y_{\tau}|-|Y_{0}|-\int_{0}^{\tau}\hat\sgn(Y_{t-})\,dM_{t}
+\int_{0}^{\tau}d|V|_{t}.
\end{equation}
By (H3),
\[
0\le -\int_{0}^{\tau}\hat\sgn(Y_{t})(f(t,Y_{t})-f(t,0))\,dt.
\]
Combining this with (\ref{eq2.03}) we get
\[
\int_{0}^{\tau}|f(t,Y_{t})|\,dt\le \int_{0}^{\tau}|f(t,0)|\,dt
+|Y_{\tau}|-\int_{0}^{\tau}\hat\sgn(Y_{t-})\,dM_{t}
+\int_{0}^{\tau}d|V|_{t}\,,
\]
from which one can easily deduce the desired inequality.
\end{dow}

\begin{uw}
In case $p\neq1$ the statement of Lemma \ref{lm0.2} remains valid
if we replace the  condition $Y\in\DM^p$ by the condition that
$|Y|^{p}$ is of class (D).
\end{uw}

\begin{lm}
\label{lm0.3} Let $p>0$ and let $(Y,M)$ be a solution of
BSDE$(\xi,f+dV)$. Assume that \mbox{\rm(A)} is satisfied and that
\[
E(\into f_{t}\,dt)^{p}+E(\into d|V|_{t})^{p}<\infty,\quad
E\sup_{0\le t\le T}|Y_{t}|^{p}<\infty.
\]
If $p\in (0,2]$ or $p>2$ and $M$ is locally in $\MM^{p}$, then
\[
E[M]^{p/2}_{T}\le c_{p}E\left(\sup_{0\le t\le T}
|Y_{t}|^{p}+(\into f_{t}\,dt)^{p}+(\into d|V|_{t})^{p}\right).
\]
\end{lm}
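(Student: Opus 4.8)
The plan is to express $[M]_T$ via It\^o's formula applied to $|Y|^2$ and then to bound the resulting terms using \mbox{\rm(A)} and the moment hypotheses. Writing the equation in differential form, $dY_t=-f(t,Y_t)\,dt-dV_t+dM_t$. The absolutely continuous part of $Y$ does not contribute to the quadratic variation, and $\Delta Y_t=\Delta M_t-\Delta V_t$, so $[Y]_T=[M]_T-\sum_{s\le T}\big(2\Delta M_s\Delta V_s-(\Delta V_s)^2\big)$. Combining this with It\^o's formula for $Y^2$ gives
\[
[M]_T=Y_T^2-Y_0^2+2\into Y_sf(s,Y_s)\,ds+2\into Y_{s-}\,dV_s-2\into Y_{s-}\,dM_s+\sum_{s\le T}\big(2\Delta M_s\Delta V_s-(\Delta V_s)^2\big).
\]
By \mbox{\rm(A)}, $Y_sf(s,Y_s)=|Y_s|\,\hat\sgn(Y_s)f(s,Y_s)\le|Y_s|\,f_s$, hence the $ds$-integral is at most $\sup_{t\le T}|Y_t|\into f_s\,ds$; clearly $|\into Y_{s-}\,dV_s|\le\sup_{t\le T}|Y_t|\into d|V|_s$; and, writing $\Delta M_s=\Delta Y_s+\Delta V_s$ and using $\sum_{s\le T}|\Delta V_s|\le\into d|V|_s$, the jump sum is bounded in absolute value by $4\sup_{t\le T}|Y_t|\into d|V|_s+(\into d|V|_s)^2$. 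With $2ab\le a^2+b^2$ we get, for every $\tau\in\TT_T$,
\[
[M]_\tau\le c\,\Xi-2\int_0^\tau Y_{s-}\,dM_s,\qquad\Xi:=\sup_{t\le T}|Y_t|^2+\Big(\into f_s\,ds\Big)^2+\Big(\into d|V|_s\Big)^2,
\]
and the hypotheses ensure $E\Xi^{p/2}<\infty$.

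Next I would localize. Choose $\tau_n\uparrow\infty$ with $\sup_{t\le\tau_n}|Y_{t-}|\le n$ and $M^{\tau_n}$ a bounded (hence square-integrable) martingale; when $p>2$, since $M$ is assumed locally in $\MM^p$, choose $\tau_n$ in addition so that $M^{\tau_n}\in\MM^p$, i.e.\ $E[M]_{T\wedge\tau_n}^{p/2}<\infty$. With such $\tau_n$ the local martingale $N:=\int_0^{\cdot}Y_{s-}\,dM_s$ stopped at $\tau_n$ is a true martingale vanishing at $0$, so taking expectations in the last display at $\tau=T\wedge\tau_n$ eliminates the stochastic integral. For $p=2$ this already gives $E[M]_{T\wedge\tau_n}\le cE\Xi$, and monotone convergence as $n\to\infty$ finishes the proof.

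For $p\neq2$, before taking expectations I would raise the inequality to the power $p/2$, using subadditivity of $x\mapsto x^{p/2}$ when $p<2$ and $(a+b)^{p/2}\le c_p(a^{p/2}+b^{p/2})$ when $p>2$, to obtain $[M]_{T\wedge\tau_n}^{p/2}\le c_p\Xi^{p/2}+c_p|N_{T\wedge\tau_n}|^{p/2}$. By the Burkholder--Davis--Gundy inequality for $N^{\tau_n}$ together with $[N^{\tau_n}]_T=\int_0^{T\wedge\tau_n}|Y_{s-}|^2\,d[M]_s\le\sup_{t\le T}|Y_t|^2\,[M]_{T\wedge\tau_n}$, we get $E|N_{T\wedge\tau_n}|^{p/2}\le c_pE\big(\sup_{t\le T}|Y_t|^{p/2}\,[M]_{T\wedge\tau_n}^{p/4}\big)$, which Young's inequality bounds by $\varepsilon E[M]_{T\wedge\tau_n}^{p/2}+c_\varepsilon E\sup_{t\le T}|Y_t|^p$. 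Since $E[M]_{T\wedge\tau_n}^{p/2}<\infty$ by the choice of $\tau_n$, choosing $\varepsilon$ small lets us absorb this term on the left, giving $E[M]_{T\wedge\tau_n}^{p/2}\le c_pE\Xi^{p/2}$ uniformly in $n$; Fatou's lemma and $E\Xi^{p/2}\le c_pE(\sup_{t\le T}|Y_t|^p+(\into f_s\,ds)^p+(\into d|V|_s)^p)$ then yield the claim.

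The main obstacle is the absorption step: moving $\varepsilon E[M]_{T\wedge\tau_n}^{p/2}$ to the left is legitimate only because this quantity is known to be finite a priori, and securing that for $p>2$ is exactly where the hypothesis ``$M$ locally in $\MM^p$'' enters (for $p\le2$ the boundedness of $M^{\tau_n}$ and Jensen's inequality suffice). A secondary point requiring care is the jump bookkeeping in It\^o's formula, namely tracking $\Delta V$ and $\Delta M$ so that the jump sum is genuinely controlled by $\into d|V|_s$ and $\sup_{t\le T}|Y_t|$.
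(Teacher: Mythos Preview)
Your proof is correct and follows essentially the same route as the paper: apply It\^o's formula to $Y^2$, invoke (A) to bound $Y_sf(s,Y_s)$ by $|Y_s|f_s$, raise to the power $p/2$, control the stochastic integral via BDG and Young's inequality, and localize so that $E[M]^{p/2}_{\tau_n}<\infty$ can be absorbed. Your treatment is in fact more careful than the paper's on one point: you track the jump contributions $[M,V]$ and $[V]$ explicitly, whereas the paper's displayed It\^o identity writes $[M]_\tau$ on the left with no jump correction and has a coefficient $1$ (rather than $2$) on $\int Y_{t-}\,dV_t$, which is only literally correct when $V$ is continuous. Your localization differs slightly (you bound $|Y_{s-}|$ and make $M^{\tau_n}$ square-integrable, while the paper takes a fundamental sequence for $\int Y_{s-}\,dM_s$), but both choices yield the needed a~priori finiteness of $E[M]^{p/2}_{\tau_n}$ for $p\le 2$.
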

\begin{dow}
Let $\tau\in\TT_{T}$. By It\^o's formula,
\begin{align}
\label{eq.fv1} |Y_{0}|^{2}+[M]_{\tau}=|Y_{\tau}|^{2}
+2\int_{0}^{\tau} Y_{t}f(t,Y_{t})\,dt
+\int_{0}^{\tau}Y_{t-}\,dV_{t} -2\int_{0}^{\tau}Y_{t-}\,dM_{t}.
\end{align}
By the above and (A),
\begin{align*}
[M]_{\tau}\le \sup_{0\le t\le T}|Y_{t}|^{2} +2\sup_{0\le t\le T}
|Y_{t}|\into f_{t}\,dt +2\sup_{0\le t\le T}|Y_{t}|\into
d|V|_{t}-2\int_{0}^{\tau}Y_{t-}\,dM_{t}.
\end{align*}
By Young's inequality,
\begin{align}
\label{eq0.1} [M]^{p/2}_{\tau}\le b_{p} \left(\sup_{0\le t\le T}
|Y_{t}|^{p}+(\into f_{t}\,dt)^{p} +(\into d|V|_{t})^{p}
+|\int_{0}^{\tau} Y_{t-}\,dM_{t}|^{p/2}\right).
\end{align}
Suppose that $E[M]^{p/2}_{\tau}<\infty$ for some $\tau\in\TT_{T}$.
Then by the Burkholder-Davis-Gundy inequality, It\^o's isometry
and again Young's inequality,
\begin{align*}
b_{p}E|\int_{0}^{\tau}Y_{t-}\,dM_{t}|^{p/2}& \le
c_{p}E[\int_{0}^{\cdot}Y_{t-}\,dM_{t}]_{\tau}^{p/4}
=c_{p}E(\int_{0}^{\tau}Y_{t-}^{2}\,d[M]_{t})^{p/4}\\
&\le c_{p}E(\sup_{0\le t\le T}|Y_{t}|^{p/2}[M]_{\tau}^{p/4}) \le
\frac{c^{2}_{p}}{2} E\sup_{0\le t\le T}|Y_{t}|^{p}+\frac12
E[M]_{\tau}^{p/2}.
\end{align*}
Combining this with (\ref{eq0.1}) gives
\begin{align}
\label{sotsop}
E[M]^{p/2}_{\tau}\le d_{p}E\left(\sup_{0\le t\le T}
|Y_{t}|^{p}+(\into f_{t}\,dt)^{p}+(\into d|V|_{t})^{p}\right).
\end{align}
To complete the proof it is enough to show that for every $p>0$
there exists a stationary sequence $\{\tau_{k}\}\subset\TT_{T}$
such that $M^{\tau_{k}}\in\MM^{p}$, because then (\ref{sotsop})
holds true with $\tau$ replaced by $\tau_k$, so letting
$k\rightarrow\infty$ and using Fatou's lemma we obtain the
required inequality. If $p>2$ then the existence of $\{\tau_k\}$
follows from the assumption on $M$. If $p\in(0,2]$ then any
fundamental sequence for the local martingale $\int_{0}^{\cdot}
Y_{t-}\,dM_{t}$ has the desired property. Indeed, if $\{\tau_k\}$
is such a sequence then by (\ref{eq.fv1}),
\[
E[M]^{p/2}_{\tau_{k}} \le c E\left(\sup_{0\le t\le T}
|Y_{t}|^{p}+(\int_{0}^{T}|f(t,0)|\,dt)^{p}
+|\int_{0}^{\tau_{k}}Y_{t-}\,dM_{t}|\right)
\]
and the right-hand side of the above inequality is finite by the
assumptions of the lemma and the very definition of the
fundamental sequence.
\end{dow}

\begin{lm}
\label{lm0.4} Assume that \mbox{\rm(H1)--(H3)} are satisfied and
there exists $C>0$ such that $\sup_{0\le t\le
T}|f(t,0)|+|V|_{T}+|\xi|\le C$. Then there exists a unique
solution $(Y,M)\in\DM^{2}\otimes\MM^{2}$ of BSDE$(\xi,f+dV)$.
\end{lm}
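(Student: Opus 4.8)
The plan is to prove existence by a standard truncation-and-approximation scheme adapted to the bounded-data setting, and to get uniqueness from Corollary \ref{cor2.2}. Since we assume (H2), uniqueness of a class (D) solution is immediate from Corollary \ref{cor2.2}, so the whole work is in producing one solution in $\DM^2\otimes\MM^2$. First I would regularize the generator to obtain Lipschitz approximations: for $n\ge1$ set $f_n(t,y)=f(t,T_n(y))-\frac1n\,y$ (or a Moreau--Yosida type regularization in $y$), which is Lipschitz in $y$ uniformly in $(t,\omega)$ because, roughly, the inner part is bounded by $\sup_{|y|\le n}|f(t,y)-f(t,0)|+|f(t,0)|$, a process in $L^1(0,T)$ by (H3), and $-y/n$ supplies a genuine Lipschitz/monotone drift. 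For such Lipschitz generators with square-integrable data $\sup_{t}|f_n(t,0)|+|V|_T+|\xi|\le C$ one has a classical existence and uniqueness result for BSDE$(\xi,f_n+dV)$ with $(Y^n,M^n)\in\DM^2\otimes\MM^2$ (fixed point in the space of $L^2$ processes with the usual Bihari/Gronwall estimate over $[0,T]$, the finite-variation term $dV$ causing no trouble). This gives the approximating sequence.

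Next I would derive uniform estimates independent of $n$. Applying It\^o's formula to $|Y^n_t|^2$ exactly as in the proof of Lemma \ref{lm0.3}, together with the sign condition coming from (H2) in the form $\hat{\mbox{sgn}}(y)(f_n(t,y)-f_n(t,0))\le0$ and $y f_n(t,y)\le |y||f_n(t,0)|$, one obtains
\[
E\sup_{0\le t\le T}|Y^n_t|^2+E[M^n]_T\le c\Big(E|\xi|^2+E(\into|f_n(t,0)|\,dt)^2+E(\into d|V|_t)^2\Big)\le cC^2,
\]
and then Lemma \ref{lm0.2} (applied with generator $f_n$, which satisfies (H2)) gives a uniform bound on $E(\into|f_n(t,Y^n_t)|\,dt)^2$ as well. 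So $\{Y^n\}$ is bounded in $\DM^2$, $\{M^n\}$ in $\MM^2$, and $\{f_n(\cdot,Y^n)\}$ is bounded in $L^2(\Omega\times(0,T))$.

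Then I would pass to the limit. The key is to upgrade weak compactness to a form of strong convergence of $Y^n$ so that the continuity hypothesis (H1) can be used on the nonlinearity. Here the monotonicity (H2) is essential: applying It\^o--Tanaka (or It\^o to $|Y^n-Y^k|^2$) to the difference of two approximants and using that $(f_n(t,Y^n_t)-f_k(t,Y^k_t))(Y^n_t-Y^k_t)$ splits into a nonpositive monotone part plus a part controlled by $\frac1n+\frac1k$ times uniformly bounded quantities, one shows $\{Y^n\}$ is Cauchy in $\DM^2$; consequently $\{M^n\}$ is Cauchy in $\MM^2$ (again via the It\^o identity as in Lemma \ref{lm0.3}), and a further application of Lemma \ref{lm0.2} to the difference shows $f_n(\cdot,Y^n)$ is Cauchy in $L^1(\Omega\times(0,T))$. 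Let $(Y,M)$ be the limit; along a subsequence $Y^n\to Y$ $dt\otimes P$-a.e., so $f_n(t,Y^n_t)=f(t,T_n(Y^n_t))-\frac1n Y^n_t\to f(t,Y_t)$ a.e. by (H1), and combined with the $L^1$ Cauchy property this identifies the limit of the integral term as $\intt f(s,Y_s)\,ds$. Passing to the limit in each term of \eqref{eq2.14} written for $(Y^n,M^n)$ yields that $(Y,M)$ solves BSDE$(\xi,f+dV)$, with $(Y,M)\in\DM^2\otimes\MM^2$ and $t\mapsto f(t,Y_t)\in L^1(0,T)$.

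The main obstacle is the limit passage on the nonlinear term: weak $L^2$ bounds alone do not let one identify $\lim_n f_n(\cdot,Y^n)$ with $f(\cdot,Y)$, since $f$ is merely continuous, not Lipschitz. The device that resolves it is to exploit (H2) to show the approximating sequence $\{Y^n\}$ is Cauchy in $\DM^2$ (hence a.e. convergent along a subsequence), at which point (H1) gives pointwise convergence of $f_n(t,Y^n_t)$ and the uniform $L^1$ (via Lemma \ref{lm0.2}) upgrades this to convergence of the integrals; one must be a little careful that the regularization $-y/n$ and the truncation $T_n$ both vanish in the limit on the region where $Y$ lives, which is where the uniform sup-norm bound on $Y^n$ is used.
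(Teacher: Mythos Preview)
Your overall architecture---regularize the generator to a Lipschitz one, solve by contraction, derive uniform estimates via Lemmas~\ref{lm0.2} and \ref{lm0.3}, then pass to the limit---is exactly the paper's. But there is a genuine gap in your first step: the function $f_n(t,y)=f(t,T_n(y))-\tfrac1n y$ is \emph{not} Lipschitz in $y$. Hypotheses (H1)--(H2) give only that $y\mapsto f(t,y)$ is continuous and nonincreasing; such a function can fail to be Lipschitz (take $f(t,y)=-y^{1/3}$), and neither truncating the argument nor adding the linear term $-y/n$ repairs this. Your justification (``the inner part is bounded \dots\ and $-y/n$ supplies a genuine Lipschitz/monotone drift'') conflates boundedness with Lipschitz continuity. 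Consequently the contraction-mapping existence result you invoke for BSDE$(\xi,f_n+dV)$ is not available for this $f_n$, and the approximation step fails as written.

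The paper uses the alternative you mention only parenthetically: the inf-convolution $f_n(t,y)=\inf_{x\in\mathbb{Q}}\{n|y-x|+f(t,x)\}$, which \emph{is} $n$-Lipschitz, preserves (H2), satisfies $|f_n(t,0)|\le|f(t,0)|$, and increases to $f$ locally uniformly in $y$. With that choice the Lipschitz existence step is legitimate. For the limit the paper does not run a Cauchy argument in $\DM^2$; instead, since $f_n\le f_{n+1}$, Proposition~\ref{lm0.1} gives $Y^n_t\le Y^{n+1}_t$, so $Y_t:=\sup_n Y^n_t$ exists and $Y^n\nearrow Y$ pointwise. The uniform bound $|Y^n_t|\le C$ (from It\^o--Tanaka and the bounded data), together with (H3) and dominated convergence, yields $\int_0^T|f_n(t,Y^n_t)-f(t,Y_t)|\,dt\to0$ a.s., which the uniform $L^2$ bounds upgrade to $L^p$ for $p\in(1,2)$; Doob's inequality then gives $Y^n\to Y$ in $\DM^p$. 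Your proposed Cauchy estimate, by contrast, is built on the $-y/n$ perturbation (that is where your ``$\tfrac1n+\tfrac1k$ times uniformly bounded quantities'' comes from); once you switch to the inf-convolution the cross term $(f_n(\cdot,Y^k)-f_k(\cdot,Y^k))(Y^n-Y^k)$ is controlled only through the $\omega$-dependent convergence $f_n\nearrow f$ on $[-C,C]$, which does not directly produce a $\DM^2$ Cauchy bound. The monotone-convergence route via comparison sidesteps this entirely.
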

\begin{dow}
We first assume additionally that there is $L>0$ such that
\begin{equation}
\label{eq2.05} |f(t,y)-f(t,y')|\le L|y-y'|
\end{equation}
for $t\in[0,T]$, $y,y'\in\BR$. For $U\in\DM^{2}$ let $Y^{U},M^{U}$
denote c\`adl\`ag versions of the processes $\tilde Y^{U},\tilde
M^{U}$ defined by
\[
\tilde Y^{U}_{t}=E(\xi+\into f(s,U_{s})+\into
dV_{s}|\FF_{t})-\intot f(s,U_{s})\,ds -\intot dV_{s}
\]
and
\[
\tilde M^{U}_{t}=E(\xi+\into f(s,U_{s})+\into
dV_{s}|\FF_{t})-\tilde Y^{U}_{0}.
\]
Then $(Y^{U},M^{U})$ is a unique solution, in the class
$\DM^{2}\otimes\MM^{2}$, of the BSDE
\begin{equation}
\label{eq0.2}
Y^{U}_{t}=\xi+\intt f(s,U_{s})\,ds+\intt dV_{s}-\intt dM^{U}_{s},
\quad t\in [0,T].
\end{equation}
Therefore we may define the mapping $\Phi:\DM^{2}\otimes
\MM^{2}\rightarrow\DM^{2}\otimes\MM^{2}$ by putting
\[
\Phi (U,N)=(Y^{U},M^{U}).
\]
By standards arguments (see, e.g., the proof of \cite[Proposition
2.4 ]{Pardoux}) one can show that $\Phi$ is contractive on the
Banach space $(\DM^{2}\otimes\MM^{2}, \|\cdot\|_{\lambda})$, where
\[
\|(Y,M)\|_{\lambda}=E\sup_{0\le t\le T}e^{\lambda t}|Y_{t}|^{2}
+E[\int_{0}^{\cdot}e^{\lambda t}\,dM_{t}]_{T}
\]
with suitably chosen $\lambda>0$. Consequently, $\Phi$ has a fixed
point $(Y,M)\in\DM^{2}\otimes\MM^{2}$. Obviously $(Y,M)$ is a
unique solution of BSDE$(\xi,f+V)$. We now show how to dispense
with the assumption (\ref{eq2.05}). For $n\in\BN$ put
\[
f_{n}(t,y)=\inf_{x\in\mathbb{Q}}\{n|y-x|+f(t,x)\}.
\]
It is an elementary check that
\begin{enumerate}
\item [(a)]$|f_{n}(t,0)|\le C$, $|f_{n}(t,y)-f_{n}(t,y')|\le n|y-y'|$ for
all $t\in [0,T]$, $y,y'\in\mathbb{R}$,
\item [(b)]$f_{1}(t,y)\le f_{n}(t,y)\le f(t,y)$ for all $t\in
[0,T]$, $y\in\mathbb{R}$ and $f_{n}(t,\cdot)\nearrow f(t,\cdot)$
uniformly on compact subsets of $\mathbb{R}$,
\item [(c)]$\sup_{|y|\le r}|f_{n}(t,y)|\le r+C+\sup_{|y|\le r}|f(t,y)|$
for every $r>0$ and $f_n$ satisfies (H2).
\end{enumerate}
By what has already been proved, for each $n\in\BN$ there exists a
unique solution $(Y^{n}, M^{n})\in\DM^{2}\otimes\MM^{2}$ of
BSDE$(\xi,f_{n}+dV)$. By the It\^o--Tanaka formula and (H2),
\begin{align*}
|Y^{n}_{t}|&\le |\xi|
+\intt\hat\sgn(Y^{n}_{s})f_n(s,Y^{n}_{s})\,ds
+\intt\hat\sgn(Y_{s-})\,dV_{s}-\intt\hat\sgn(Y^{n}_{s-})\,dM_{s}\\
&\le |\xi|+\into |f_n(s,0)|\,ds +\into
d|V|_{s}-\intt\hat\sgn(Y^{n}_{s-})\,dM_{s}.
\end{align*}
By the above and the assumptions on $\xi,f,V$,
\begin{equation}
\label{eq0.3}
|Y^{n}_{t}|\le E(|\xi|+\into |f(s,0)|\,ds+\into d|V|_{s}|\FF_{t})\le C.
\end{equation}
Moreover, by Proposition \ref{lm0.1}, $Y^{n}_{t}\le Y^{n+1}_{t}$,
$t\in [0,T]$. Therefore defining $Y_{t}=\sup_{n\ge 1} Y^{n}_{t}$,
$t\in [0,T]$, we see that
\begin{equation}
\label{eq0.5} Y^{n}_{t}\nearrow Y_{t},\quad t\in [0,T], \qquad
E\into|Y^{n}_{t}-Y_{t}|^{p}\,dt\rightarrow 0,\quad p\ge0.
\end{equation}
By (\ref{eq0.3}), (\ref{eq0.5}), (H3) and (b), (c),
$\into|f_{n}(t,Y^{n}_{t})-f(t,Y_{t})|\,dt\rightarrow0$, while by
Lemmas \ref{lm0.2} and \ref{lm0.3}, $\sup_{n\ge 1}
E(\into|f_{n}(t,Y^{n}_{t})|\,dt)^{2}<\infty$. Hence
\begin{equation}
\label{eq0.7} E(\into
|f_{n}(t,Y^{n}_{t})-f(t,Y_{t})|\,dt)^{p}\rightarrow 0
\end{equation}
for every $p\in (1,2)$. Next, by Doob's $L^{p}$-inequality,
\begin{align}
\label{eq0.8}
\nonumber E\sup_{0\le t\le T}|Y^{n}_{t}-Y^{m}_{t}|^{p}
&\le E\sup_{0\le t\le T}\left(E(\into|f_{n}(s,Y^{n}_{s})-f_{m}(s,Y^{m}_{s})|\,ds
|\FF_{t})\right)^{p}\\
&\le c(p) E(\into |f_{n}(s,Y^{n}_{s})-f_{m}(s,Y^{m}_{s})|\,ds)^{p},
\end{align}
which when combined with (\ref{eq0.3})--(\ref{eq0.7}) shows that
$Y\in\DM^{2}$ and $Y^{n}\rightarrow Y$ in $\DM^{p},\, p\in (1,2)$.
Since
\[
Y^{n}_{t}=E(\xi+\intt f_{n}(s,Y^{n}_{s})\,ds +\intt
dV_{s}|\FF_{t}),
\]
using the fact that $Y^{n}\rightarrow Y$ in $\DM^{p},\,p\in(1,2)$ and
(\ref{eq0.7}) we conclude that
\begin{equation}
\label{eq2.11} Y_{t}=E(\xi+\intt f(s,Y_{s})\,ds+\intt
dV_{s}|\FF_{t}),\quad t\in[0,T].
\end{equation}
Therefore the pair $(Y,M)$, where
\begin{equation}
\label{eq2.12} M_{t}=E(\xi+\into f(s,Y_{s})\,ds +\into
dV_{s}|\FF_{t})-Y_{0},\quad t\in[0,T]
\end{equation}
is a solution of BSDE$(\xi,f+dV)$. The desired integrability
properties of $(Y,M)$ follow immediately from
Lemma \ref{lm0.3}.
\end{dow}

\begin{tw}
\label{th0.1} Assume that \mbox{\rm(H1)--(H3)} and \mbox{\rm(H4)}
with $p=1$ are satisfied. Then there exists a unique solution
$(Y,M)$ of BSDE$(\xi,f+dV)$ such that $(Y,M)\in
\DM^{q}\otimes\MM^{q}$, $q\in(0,1)$, $M$ is uniformly integrable
and $Y$ is of class \mbox{\rm(D)}.
\end{tw}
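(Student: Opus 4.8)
The plan is to obtain the solution as a limit of the bounded-data solutions constructed in Lemma \ref{lm0.4}, combined with the a priori estimates of Lemmas \ref{lm0.2} and \ref{lm0.3}, and to use the comparison result (Proposition \ref{lm0.1}) to get monotone approximation. Uniqueness is already settled by Corollary \ref{cor2.2}, so only existence and the stated integrability require work.

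First I would set up the approximation. For $n\in\BN$ put $\xi^n=T_n(\xi)$, $V^n_t=\int_0^t\mathbf{1}_{\{|V|_s\le n\}}\,dV_s$, and truncate the driver: for instance let $g_n(t,y)=f(t,y)-f(t,0)+T_n(f(t,0))$, so that $g_n$ still satisfies (H1)--(H3) and now has $\sup_t|g_n(t,0)|\le n$, while the terminal value and the measure are bounded by $n+|V|_T\wedge n$ in the relevant norms. (One must be slightly careful that $|V|_T$ need not be bounded, so one truncates $V$ as above; note $dV^n\to dV$ in total variation and $|V^n|_T\le|V|_T$.) By Lemma \ref{lm0.4} there is a unique $(Y^n,M^n)\in\DM^2\otimes\MM^2$ solving BSDE$(\xi^n,g_n+dV^n)$. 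The main point is then to show $(Y^n,M^n)$ is Cauchy in an appropriate sense and that the limit solves the equation.

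Next I would derive uniform estimates and Cauchy estimates. Applying the It\^o--Tanaka formula to $|Y^n_t-Y^m_t|$ and using (H2) for the difference of the drivers (the term $(f(s,Y^n_s)-f(s,Y^m_s))\,\hat\sgn(Y^n_s-Y^m_s)\le 0$ is discarded) yields, after taking conditional expectations,
\[
|Y^n_t-Y^m_t|\le E\Big(|\xi^n-\xi^m|+\into|g_n(s,0)-g_m(s,0)|\,ds+\into d|V^n-V^m|_s\,\Big|\,\FF_t\Big),
\]
so by Doob's inequality in $L^q$, $q\in(0,1)$ (or a weak-$L^1$/Garsia-type argument at $p=1$), $\{Y^n\}$ is Cauchy in $\DM^q$ for every $q\in(0,1)$ and the limit $Y$ is of class (D) because the right-hand data are $L^1$-bounded and uniformly integrable (one bounds $\sup_\tau E|Y^n_\tau|$ by $E(|\xi|+\into|f(s,0)|\,ds+\into d|V|_s)$ uniformly in $n$). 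Having $Y^n\to Y$ in $\DM^q$ and $Y^n\to Y$ also in measure $dt\otimes dP$, hypotheses (H1)--(H3) together with the uniform bound $\sup_n E(\into|g_n(s,Y^n_s)|\,ds)<\infty$ from Lemma \ref{lm0.2} (note the data in that estimate are dominated by the $L^1$-data for $f$, $\xi$, $V$) give, via Vitali's theorem, that $\into|g_n(s,Y^n_s)-f(s,Y_s)|\,ds\to0$ in $L^1(P)$. Passing to the limit in $Y^n_t=E(\xi^n+\intt g_n(s,Y^n_s)\,ds+\intt dV^n_s\mid\FF_t)$ identifies $Y$ as satisfying $Y_t=E(\xi+\intt f(s,Y_s)\,ds+\intt dV_s\mid\FF_t)$; defining $M$ by the martingale on the right minus $Y_0$ as in \eqref{eq2.12} gives a solution $(Y,M)$ of BSDE$(\xi,f+dV)$, and $M$ is uniformly integrable since it is a closed martingale with $L^1$ terminal value.

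Finally I would record the integrability: $M^n-M^m$ is, up to the finite-variation and $Y$-difference terms, controlled by $Y^n-Y^m$, so $[M^n-M^m]^{q/2}_T$ is Cauchy by Lemma \ref{lm0.3} applied to the difference equation (which has zero driver after discarding the monotone term, hence (A) holds with $f_t\equiv0$), giving $M^n\to M$ in $\MM^q$ and $(Y,M)\in\DM^q\otimes\MM^q$ for all $q\in(0,1)$; that $Y_{T\wedge\zeta}$-type tail behaviour is not needed here since $T$ is fixed. I expect the main obstacle to be the passage to the limit at the critical exponent $p=1$: Doob's $L^1$ inequality fails, so one cannot directly conclude $\DM^1$-convergence, and the argument must be organized so that (i) class-(D) of $Y$ is obtained from uniform integrability of the driving data rather than from an $L^1$ bound on $\sup_t|Y^n_t|$, and (ii) the $\DM^q$ ($q<1$) convergence plus uniform integrability of $\{Y^n\}$ suffices to pass to the limit in the defining conditional expectation. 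Handling the truncation of $V$ (so that $V^n$ has bounded variation while $dV^n\to dV$ and $|V^n|_T$ stays dominated by $|V|_T\in L^1$) is a routine but necessary technical point.
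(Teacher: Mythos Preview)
Your proposal is correct and follows essentially the same route as the paper: the same truncations $\xi^n=T_n(\xi)$, $g_n(t,y)=f(t,y)-f(t,0)+T_n(f(t,0))$, $V^n_t=\int_0^t\mathbf{1}_{\{|V|_s\le n\}}\,dV_s$, the same It\^o--Tanaka/\mbox{(H2)} estimate $|\delta Y_t|\le E(\Psi^n|\FF_t)$ giving Cauchy-ness in $\|\cdot\|_1$ and in $\DM^q$ for $q\in(0,1)$, and the same identification of $M$ via (\ref{eq2.12}). One small caution: your Vitali step as written (``$L^1$ bound $\Rightarrow$ $L^1$ convergence'') is not justified, since $\sup_n E\into|g_n(s,Y^n_s)|\,ds<\infty$ gives only boundedness, not uniform integrability; the paper sidesteps this exactly as you anticipate in your final paragraph, using the $L^q$ ($q<1$) bounds from Lemmas \ref{lm0.2}--\ref{lm0.3} and \cite[Lemma 6.1]{BDHPS} rather than $L^1$ convergence of the driver.
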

\begin{dow}
Write
\[
\xi^{n}=T_{n}(\xi),\quad f_{n}(t,y)=f(t,y)-f(t,0)+T_{n}(f(t,0)),
\quad  V^{n}_{t}=\intot \mathbf{1}_{\{|V|_{s}\le n\}}\,dV_{s}\,.
\]
By Lemma \ref{lm0.4}, for each $n\in\BN$ there exists a unique
solution $(Y^{n},M^{n})\in\DM^{2}\otimes\MM^{2}$ of
BSDE$(\xi^{n},f_{n}+dV^{n})$. In particular,
\begin{equation}
\label{eq2.13} Y^{n}_{t}=E(\xi^{n}+\intt
f_{n}(s,Y^{n}_{s})\,ds+\intt dV^{n}_{s}|\FF_{t}),\quad t\in[0,T].
\end{equation}
Write $\delta Y=Y^{m}-Y^{n}$, $\delta M=M^{m}-M^{n}$, $\delta
\xi=\xi^{m}-\xi^{n}$, $\delta V=V^{m}-V^{n}$ for $m\ge n$. By the
It\^o--Tanaka formula and (H2),
\begin{align*}
|\delta Y_{t}|&\le |\delta \xi|+\int_{t}^{T}\hat\sgn(\delta
Y_{s-})(f_{m}(s,Y^{m}_{s})-f_{n}(s,Y^{n}_{s}))\,ds\\
&\quad+ \int_{t}^{T}\hat\sgn(\delta Y_{s-})d\delta
V_{s}+\int_{t}^{T}\hat\sgn(\delta Y_{s-})\,d\delta M_{s}\\
& \le|\delta \xi|+\into|f_{m}(s,Y^{n}_{s})-f_{n}(s,Y^{n}_{s})|\,ds
+\into d|\delta V|_{s}+\int_{t}^{T}\hat\sgn(\delta
Y_{s-})\,d\delta M_{s}.
\end{align*}
Conditioning both sides of the above inequality with respect to
$\FF_{t}$ and using the definitions of $\xi^{n}, f^{n}, V^{n}$ we
get
\[
|\delta Y_{t}|\le E(\Psi^n|\FF_t),
\]
where
\[
\Psi^n=|\xi|\mathbf{1}_{\{|\xi|>n\}}+\into
|f(t,0)|\mathbf{1}_{\{|f(t,0)|>n\}}\,dt+\into
\mathbf{1}_{\{|V|_{t}>n\}}\,d|V|_{t}\,.
\]
From the above one can deduce that
\[
\|\delta Y\|_{1}\le E\Psi^n
\]
and, using \cite[Lemma 6.1]{BDHPS} (see also \cite[Proposition
IV.4.7]{RY}), that
\[
E\sup_{0\le t\le T}|\delta Y_{t}|^{q}\le \frac{1}{1-q}E(\Psi^n)^q
\]
for every $q\in (0,1)$. Therefore there exists $Y\in \DM^{q}$,
$q\in (0,1)$, such that $Y$ is of class (D) and $Y^{n}\rightarrow
Y$ in the norm $\|\cdot\|_{1}$ and in $\DM^{q}$ for $q\in (0,1)$.
From the last convergence and (H1), (H3) we conclude that
\[
\into |f_{n}(t,Y^{n}_{t})-f(t,Y_{t})|\,dt\rightarrow 0.
\]
By Lemmas \ref{lm0.2} and \ref{lm0.3},
\[
\sup_{n\ge 1} E\left( \into
|f_{n}(t,Y^{n}_{t})|\,dt\right)^{q}<\infty
\]
for every $q\in(0,1)$. Therefore applying once again \cite[Lemma
6.1]{BDHPS} and letting $n\rightarrow\infty$ in (\ref{eq2.13}) we
see that $Y$ satisfies (\ref{eq2.11}) and hence the pair $(Y,M)$,
where $M$ is defined by (\ref{eq2.12}), is a solution of
BSDE$(\xi,f+dV)$. The integrability properties of $M$ follow from
Lemma \ref{lm0.2}, Lemma \ref{lm0.3} and (\ref{eq2.12}).
\end{dow}

\nsubsection{Generalized BSDEs with random terminal time}
\label{sec3}

In this section $\zeta\in\TT$, $V\in\VV$ and
$f:\mathbb{R}_{+}\times\Omega\times\mathbb{R}\rightarrow
\mathbb{R}$ is a function such that $f(\cdot,y)$ is progressively
measurable for every $y\in\mathbb{R}$.

\begin{df}
We say that a pair $(Y,M)$ is a solution of BSDE$(\zeta,f+dV)$ if
\begin{enumerate}
\item [(a)] $Y\in\DM$, $Y_{t\wedge \zeta}\rightarrow 0$
as $t\rightarrow\infty$ and $M\in\MM_{loc}$\,,
\item [(b)]for every $T>0$, $t\mapsto f(t,Y_{t})\in L^{1}(0,T)$ and
\begin{equation}
\label{eq3.1} Y_{t}=Y_{T\wedge\zeta}+\intz f(s,Y_{s})\,ds +\intz
dV_{s}-\intz dM_{s},\quad t\in [0,T].
\end{equation}
\end{enumerate}
\end{df}

Let us observe that from the above definition it follows that
$Y_{t}=Y_{t\wedge\zeta}$ for every $t\ge 0$.

We first state the analogues of Proposition \ref{lm0.1} and
Corollary \ref{cor2.2}.
\begin{stw}
\label{th1.1} Let $(Y^{1},M^{1}), (Y^{2},M^{2})$ be solutions of
BSDE$(\zeta,f^{1}+dV^{1})$ and BSDE$(\zeta,f^{2}+dV^{2})$,
respectively, such that $Y^{1}, Y^{2}$ are of class \mbox{\rm(D)}.
If $dV^{1}\le dV^{2}$ and either \mbox{\rm(\ref{eq2.01})} or
\mbox{\rm(\ref{eq2.02})} is satisfied then $Y^{1}_{t}\le
Y^{2}_{t}$, $t\ge 0$.
\end{stw}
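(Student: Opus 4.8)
The plan is to localise in time, carry out on each interval $[0,T]$ the comparison computation of Proposition~\ref{lm0.1}, and then let $T\to\infty$, using the terminal behaviour $Y^i_{T\wedge\zeta}\to0$ and the class~(D) hypothesis to close the estimate. Write $\delta Y=Y^1-Y^2$, $\delta M=M^1-M^2\in\MM_{loc}$, $\delta V=V^1-V^2$; recall that, as observed right after the definition of a solution, $Y^i_t=Y^i_{t\wedge\zeta}$, so $\delta Y_t=\delta Y_{t\wedge\zeta}$. Subtracting the two equations (\ref{eq3.1}) written at $t$ and at $0$ gives
\[
\delta Y_t=\delta Y_0-\int_0^{t\wedge\zeta}\bigl(f^1(s,Y^1_s)-f^2(s,Y^2_s)\bigr)\,ds-\int_0^{t\wedge\zeta}d\delta V_s+\int_0^{t\wedge\zeta}d\delta M_s,\qquad t\ge0.
\]

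Fix $T>0$ and let $\{\tau_k\}$ be a fundamental sequence for the local martingale $\int_0^{\cdot}\mathbf{1}_{\{\delta Y_{s-}>0\}}\,d\delta M_s$. Applying the It\^o--Tanaka formula to $(\delta Y)^+$ on $[t\wedge\zeta\wedge\tau_k,\,T\wedge\zeta\wedge\tau_k]$ and dropping the nonnegative local time at $0$ and the (nonnegative) jump correction terms, exactly as in the proof of Proposition~\ref{lm0.1}, one gets
\[
(\delta Y_{t\wedge\zeta\wedge\tau_k})^+\le(\delta Y_{T\wedge\zeta\wedge\tau_k})^+
+\int_{t\wedge\zeta\wedge\tau_k}^{T\wedge\zeta\wedge\tau_k}\mathbf{1}_{\{\delta Y_{s-}>0\}}\bigl(f^1(s,Y^1_s)-f^2(s,Y^2_s)\bigr)\,ds
+\int_{t\wedge\zeta\wedge\tau_k}^{T\wedge\zeta\wedge\tau_k}\mathbf{1}_{\{\delta Y_{s-}>0\}}\,d\delta V_s
-\int_{t\wedge\zeta\wedge\tau_k}^{T\wedge\zeta\wedge\tau_k}\mathbf{1}_{\{\delta Y_{s-}>0\}}\,d\delta M_s.
\]
On $\{\delta Y_s>0\}$ — which agrees with $\{\delta Y_{s-}>0\}$ outside a Lebesgue-null set of times, so the two are interchangeable inside the $ds$-integral — I would split $f^1(s,Y^1_s)-f^2(s,Y^2_s)=\bigl(f^1(s,Y^1_s)-f^2(s,Y^1_s)\bigr)+\bigl(f^2(s,Y^1_s)-f^2(s,Y^2_s)\bigr)$: under (\ref{eq2.01}) the first term is $\le0$ a.e.\ and the second is $\le0$ since $f^2$ satisfies (H2) and $Y^1_s>Y^2_s$ there (under (\ref{eq2.02}) one argues symmetrically, splitting off $f^1$); hence the $ds$-integral is $\le0$, and the $d\delta V$-integral is $\le0$ because $dV^1\le dV^2$. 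Since $\bigl(\int_0^{\cdot}\mathbf{1}_{\{\delta Y_{s-}>0\}}\,d\delta M\bigr)^{\tau_k}$ is a martingale and $t\wedge\zeta\wedge\tau_k$, $T\wedge\zeta\wedge\tau_k$ are bounded stopping times, taking expectations gives $E(\delta Y_{t\wedge\zeta\wedge\tau_k})^+\le E(\delta Y_{T\wedge\zeta\wedge\tau_k})^+$.

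It remains to pass to the limit. Letting $k\to\infty$: $\tau_k\uparrow\infty$, so the stopped times converge a.s.\ to $t\wedge\zeta$ and $T\wedge\zeta$, and since $Y^1,Y^2$ are of class~(D) the family $\{(\delta Y_\sigma)^+:\sigma\in\TT\}$ is uniformly integrable; hence $E(Y^1_t-Y^2_t)^+=E(\delta Y_{t\wedge\zeta})^+\le E(\delta Y_{T\wedge\zeta})^+=E(Y^1_{T\wedge\zeta}-Y^2_{T\wedge\zeta})^+$ for every $T\ge t$. Now let $T\to\infty$: by part~(a) of the definition $Y^i_{T\wedge\zeta}\to0$ a.s., and uniform integrability (once more from class~(D)) promotes this to $L^1$-convergence, so the right-hand side tends to $0$. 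Therefore $E(Y^1_t-Y^2_t)^+=0$ for every $t\ge0$, and since $Y^1,Y^2$ are c\`adl\`ag this yields $Y^1_t\le Y^2_t$ for all $t\ge0$. The one-line computation is precisely that of Proposition~\ref{lm0.1}; the only delicate points are the two successive limit passages, and both are handled by the class~(D) assumption.
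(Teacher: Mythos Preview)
Your proof is correct and follows essentially the same route as the paper: apply the It\^o--Tanaka formula to $(\delta Y)^+$ on $[t,T]$ localised by a fundamental sequence, kill the drift terms using the hypotheses, take expectations, let $k\to\infty$ via class~(D), and then let $T\to\infty$ using $Y^i_{T\wedge\zeta}\to0$ together with class~(D). Your write-up is in fact a bit more careful than the paper's---you keep the $d\delta V$ term explicit and spell out the splitting of the generator term---but the argument is the same.
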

\begin{dow}
Assume that (\ref{eq2.01}) is satisfied. Let $Y=Y^{1}-Y^{2},\,
M=M^{1}-M^{2}$ and let $\tau\in\mathcal{T}$. By the It\^o--Tanaka
formula and (H2), for every $T>0$ we have
\begin{align*}
Y^{+}_{t}&\le Y^{+}_{T\wedge\tau\wedge\zeta}
+\intzt \mathbf{1}_{\{Y^{1}_{s-}>Y^{2}_{s-}\}}(f(s,Y^{1}_{s})-f(s,Y^{2}_{s}))\,ds\\
&\quad+\intzt\mathbf{1}_{\{Y^{1}_{s-}>Y^{2}_{s-}\}}\,dM_{s} \le
Y^{+}_{T\wedge\tau\wedge\zeta}+\intzt
\mathbf{1}_{\{Y^{1}_{s-}>Y^{2}_{s-}\}}\,dM_{s},\quad t\ge 0.
\end{align*}
Let $\{\tau_{k}\}$ be a fundamental sequence for $M$. Since $Y$ is
of class (D), taking expectation of both sides of the above
inequality with $\tau$ replaced by $\tau_k$ and then letting
$k\rightarrow\infty$ we see that $EY^{+}_{t}\le
EY^{+}_{T\wedge\zeta}$, $t\ge 0$. Therefore letting
$T\rightarrow\infty$ and using once again the fact that $Y$ is of
class (D) we conclude that $Y_{t}=0$, $t\ge 0$.  In case
(\ref{eq2.02}) is satisfied the proof is analogous.
\end{dow}

\begin{wn}
\label{wn3.2}
Assume \mbox{\rm(H2)}. Then there exists at most one
solution $(Y,M)$ of BSDE$(\zeta,f+dV)$ such that $Y$ is of class
\mbox{\rm(D)}.
\end{wn}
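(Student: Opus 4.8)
The plan is to deduce the statement directly from the comparison result Proposition \ref{th1.1}, in exactly the same way that Corollary \ref{cor2.2} follows from Proposition \ref{lm0.1}; I do not expect a genuine obstacle, only a minor point to be spelled out about the martingale component.

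So, let $(Y^{1},M^{1})$ and $(Y^{2},M^{2})$ be two solutions of BSDE$(\zeta,f+dV)$ with $Y^{1},Y^{2}$ of class (D). First I would apply Proposition \ref{th1.1} with $f^{1}=f^{2}=f$ and $V^{1}=V^{2}=V$. The hypothesis $dV^{1}\le dV^{2}$ holds with equality, and since $f$ satisfies (H2) and $f^{1}(t,Y^{1}_{t})=f(t,Y^{1}_{t})=f^{2}(t,Y^{1}_{t})$ for every $t$, condition (\ref{eq2.01}) is satisfied; hence $Y^{1}_{t}\le Y^{2}_{t}$ for all $t\ge0$. Interchanging the roles of the two solutions --- so that now (\ref{eq2.02}) is the condition in force, again valid because $f$ satisfies (H2) and $f^{1}(t,Y^{2}_{t})=f^{2}(t,Y^{2}_{t})$ --- I obtain $Y^{2}_{t}\le Y^{1}_{t}$ for all $t\ge0$. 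Therefore $Y^{1}$ and $Y^{2}$ are indistinguishable.

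To finish, I would identify the martingale parts. Subtracting the two instances of (\ref{eq3.1}) and using $Y^{1}=Y^{2}$ together with $f(\cdot,Y^{1})=f(\cdot,Y^{2})$, I get $\intz dM^{1}_{s}=\intz dM^{2}_{s}$ for all $0\le t\le T$; taking $t=0$ and letting $T$ vary, this forces $M^{1}$ and $M^{2}$ to coincide on $[0,\zeta]$, which is all that the definition of a solution constrains. The only point calling for a word of care --- and it is the same one already implicit in Corollary \ref{cor2.2} --- is that uniqueness of the martingale component is to be understood modulo its behaviour after the random time $\zeta$, i.e. under the normalization $M_{t}=M_{t\wedge\zeta}$ that is built into equation (\ref{eq3.1}).
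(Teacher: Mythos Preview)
Your proof is correct and is precisely the argument the paper has in mind: the corollary is stated without proof, as it is meant to follow immediately from Proposition~\ref{th1.1} (exactly in the way Corollary~\ref{cor2.2} follows from Proposition~\ref{lm0.1}), and your write-up makes this explicit, including the natural remark that the martingale component is determined only on $[0,\zeta]$.
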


\begin{lm}
\label{lm1.1} Let $(Y,M)$ be a solution of BSDE$(\xi,f+dV)$ on
$[0,T]$ such that $Y$ is of class \mbox{\rm(D)}. Assume
additionally that $\xi$ is $\FF_{\tau}$-measurable for some
$\tau\in\TT_{T}$, $f(\cdot,y)=0$ on the interval $(\tau,T]$ and
$\int_{\tau}^{T} d|V|_{t}=0$. Then
$(Y_{t\wedge\tau},M_{t\wedge\tau})=(Y_{t},M_{t})$, $t\in [0,T]$.
\end{lm}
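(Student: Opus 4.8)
My plan is to prove that the process $\bar Y_t:=Y_t-Y_{t\wedge\tau}$, $t\in[0,T]$, vanishes identically. By the very definition of $\bar Y$ this gives $Y_t=Y_{t\wedge\tau}$, and a by-product of the argument will be the identity $\bar Y_t=M_t-M_{t\wedge\tau}$, which then also yields $M_t=M_{t\wedge\tau}$.

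\emph{Step 1: $\bar Y$ is a local martingale.} Writing the defining equation (\ref{eq2.14}) at time $t$ and at the stopping time $\tau$ and subtracting, I get, for a.e.\ $\omega$ and every $t\in[\tau,T]$,
\[
Y_t-Y_\tau=-\int_\tau^t f(s,Y_s)\,ds+(V_\tau-V_t)+(M_t-M_\tau).
\]
Since $f(s,\cdot)\equiv 0$ on $(\tau,T]$, the integral on the right is zero; since $\int_\tau^T d|V|_t=0$ and $|V|$ is nondecreasing, $V$ is constant on $[\tau,T]$, so $V_\tau-V_t=0$. Hence $Y_t-M_t=Y_\tau-M_\tau$ for $t\in[\tau,T]$, while trivially $Y_t-M_t=Y_{t\wedge\tau}-M_{t\wedge\tau}$ for $t\le\tau$; altogether
\[
\bar Y_t=Y_t-Y_{t\wedge\tau}=M_t-M_{t\wedge\tau},\qquad t\in[0,T].
\]
Thus $\bar Y$ is the difference of $M$ and the stopped local martingale $M^\tau$, hence a local martingale, and moreover $\bar Y_t=0$ for $t\le\tau$.

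\emph{Step 2: $\bar Y$ is a uniformly integrable martingale.} For any $\sigma\in\TT$ we have $|\bar Y_\sigma|\le|Y_\sigma|+|Y_{\sigma\wedge\tau}|$, and since $Y$ is of class (D) and $\sigma\wedge\tau\in\TT$, both $\{Y_\sigma:\sigma\in\TT\}$ and $\{Y_{\sigma\wedge\tau}:\sigma\in\TT\}$ are uniformly integrable; hence $\bar Y$ is of class (D). A local martingale of class (D) is a uniformly integrable martingale, so on $[0,T]$ we have $\bar Y_t=E(\bar Y_T|\FF_t)$ and, by optional sampling, $\bar Y_\tau=E(\bar Y_T|\FF_\tau)$.

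\emph{Step 3: identification of the terminal value.} Taking $t=T$ in (\ref{eq2.14}) gives $Y_T=\xi$, so (using $\tau\le T$) $\bar Y_T=Y_T-Y_{T\wedge\tau}=\xi-Y_\tau$. This random variable is $\FF_\tau$-measurable, because $\xi$ is $\FF_\tau$-measurable by hypothesis and $Y$ is adapted and c\`adl\`ag. Therefore $E(\bar Y_T|\FF_\tau)=\bar Y_T$, whence $\bar Y_T=\bar Y_\tau$; but $\bar Y_\tau=0$ by Step 1, so $\bar Y_T=0$, and consequently $\bar Y_t=E(\bar Y_T|\FF_t)=0$ for every $t\in[0,T]$. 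By the two expressions for $\bar Y$ obtained in Step 1 this is exactly $Y_t=Y_{t\wedge\tau}$ and $M_t=M_{t\wedge\tau}$, $t\in[0,T]$. The only point requiring care is the role of the class (D) assumption in Step 2: a local martingale vanishing on $[0,\tau]$ need not vanish afterwards, and it is precisely the class (D) property that upgrades $\bar Y$ to a uniformly integrable martingale, so that the $\FF_\tau$-measurability of $\bar Y_T$ forces $\bar Y_T=\bar Y_\tau=0$; the remaining steps are routine manipulations of the BSDE identity together with the three structural assumptions on $\xi$, $f$, and $V$.
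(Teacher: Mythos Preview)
Your proof is correct and a bit more streamlined than the paper's. The paper first shows that $Y_\delta=\xi$ for every stopping time $\delta\ge\tau$ (using a fundamental sequence for $M$ together with the class~(D) property and the $\FF_\tau$-measurability of $\xi$), and then applies It\^o's formula to $|Y|^2$ to obtain $\int_\tau^{t\vee\tau}d[M]_s=-2\int_\tau^{t\vee\tau}Y_{s-}\,dM_s$, from which it deduces that $[M]$ is flat after $\tau$. You bypass the It\^o/quadratic-variation step entirely: once you observe that $\bar Y=M-M^{\tau}$ is a local martingale, the class~(D) assumption upgrades it to a uniformly integrable martingale, and then the $\FF_\tau$-measurability of $\bar Y_T=\xi-Y_\tau$ together with optional sampling forces $\bar Y_T=\bar Y_\tau=0$. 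Both arguments hinge on exactly the same two ingredients (class~(D) and $\FF_\tau$-measurability of $\xi$); your route just packages them more directly and avoids the second-order calculus.
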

\begin{dow}
Let $\{\sigma_{k}\}$ be a fundamental sequence for $M$. By the
assumptions, for every $k\in\BN$ and $\delta\in\TT_{T}$ such that
$\delta\ge\tau$,
\[
Y_{\delta}=Y_{\delta\vee\sigma_k}-\int_{\delta}^{\delta\vee\sigma_k}\,dM_{s}.
\]
Since $Y$ is of class (D) and $\xi$ is $\FF_{\tau}$-measurable, it
follows that
\begin{equation}
\label{eq1.1}
Y_{\delta}=E(Y_{\delta\vee\sigma_{k}}|\FF_{\delta})
\rightarrow E(\xi|\FF_{\delta})=\xi.
\end{equation}
By It\^o's formula,
\[
|Y_{\tau}|^{2}+\int_{\tau}^{t\vee\tau}\,d[M]_{s}
=|Y_{t\vee\tau}|^{2}-2\int_{\tau}^{t\vee\tau}Y_{s-}\,dM_{s}.
\]
By the above and (\ref{eq1.1}),
\[
\int_{\tau}^{t\vee\tau}\,d[M]_{s} =-2\int_{\tau}^{t\vee\tau}
Y_{s-}\,dM_{s},\quad t\in [0,T],
\]
which implies that $M_{t\wedge\tau}=M_{\tau},\, t\in [0,T]$. Since
$Y_{t}=\xi-\int_{t}^{T}\,dM_{s}$ for $t\in [\tau,T]$, we get the
desired result.
\end{dow}
\medskip

We can now prove our main result on existence and uniqueness of
solutions of (\ref{eq3.1}).

\begin{tw}
\label{th1.2} Assume that
$E\int_{0}^{\zeta}d|V|_{t}+E\int_{0}^{\zeta}|f(t,0)|\,dt<\infty$
and that $f$ satisfies \mbox{\rm(H1)--(H3)} for every $T>0$. Then
there exists a unique solution $(Y,M)$ of BSDE$(\zeta,f+dV)$ such
that $(Y,M)\in\DM^{q}\otimes\MM^{q}$ for $q\in (0,1)$, $M$ is a
uniformly integrable martingale and $Y$ is of class \mbox{\rm(D)}.
\end{tw}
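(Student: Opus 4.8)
The plan is to construct the solution on the random interval $[0,\zeta)$ by truncating the terminal time: for each $n\in\BN$ set $\zeta_n=n$ (or $\zeta\wedge n$) and solve the fixed-time BSDE on $[0,n]$ with terminal value $0$ using Theorem \ref{th0.1}, then pass to the limit $n\to\infty$. Concretely, first I would define $f^n(t,y)=\mathbf{1}_{\{t\le\zeta\wedge n\}}f(t,y)$ and $V^n_t=V_{t\wedge\zeta\wedge n}$, which inherit (H1)--(H3) and, thanks to the standing integrability hypothesis $E\int_0^\zeta d|V|_t+E\int_0^\zeta|f(t,0)|\,dt<\infty$, also (H4) with $p=1$ on $[0,n]$ with terminal value $\xi=0$. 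Theorem \ref{th0.1} then yields a unique solution $(Y^n,M^n)$ on $[0,n]$ with $Y^n$ of class (D), $(Y^n,M^n)\in\DM^q\otimes\MM^q$ and $M^n$ uniformly integrable; I extend $Y^n$ to $[0,\infty)$ by $Y^n_t=Y^n_{t\wedge\zeta\wedge n}$ and $M^n$ correspondingly constant after $\zeta\wedge n$. By Lemma \ref{lm1.1} (applied on $[0,n+k]$ with $\tau=\zeta\wedge n$) the solution on a longer interval restricted to $[0,\zeta\wedge n]$ coincides with $(Y^n,M^n)$, so the family $\{(Y^n,M^n)\}$ is consistent.

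Next I would establish that $\{Y^n\}$ is Cauchy in $\|\cdot\|_1$ and in $\DM^q$. For $m\ge n$, write $\delta Y=Y^m-Y^n$; since $Y^m$ and $Y^n$ agree up to $\zeta\wedge n$ and $Y^n$ vanishes after $\zeta\wedge n$, we have $\delta Y_t=Y^m_t\mathbf{1}_{\{t>\zeta\wedge n\}}$ for $t\le m$, and applying the It\^o--Tanaka formula together with (H2) exactly as in the proof of Theorem \ref{th0.1} gives
\[
\|\delta Y\|_1\le E\Psi^n,\qquad E\sup_{t\ge0}|\delta Y_t|^q\le\frac{1}{1-q}E(\Psi^n)^q,
\]
where now $\Psi^n=\int_{\zeta\wedge n}^{\zeta}|f(t,0)|\,dt+\int_{\zeta\wedge n}^{\zeta}d|V|_t$ (and one uses $\hat\sgn(\delta Y_{s-})f(s,0)\le|f(s,0)|$ plus the monotonicity term having the right sign). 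The integrability hypothesis forces $\Psi^n\to0$ in $L^1$, hence $E\Psi^n\to0$ and $E(\Psi^n)^q\to0$ for $q\in(0,1)$. Therefore there is $Y\in\DM^q$ of class (D) with $Y^n\to Y$ in $\|\cdot\|_1$ and in $\DM^q$, and $Y_{t\wedge\zeta}\to0$ as $t\to\infty$ because $\|Y_{\cdot\wedge\zeta}-Y^n_{\cdot\wedge\zeta}\|_1$ is small uniformly and each $Y^n$ is eventually $0$.

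It then remains to identify the equation. Fix $T>0$; for $n>T$ the pair $(Y^n,M^n)$ satisfies $Y^n_t=Y^n_{T\wedge\zeta}+\int_{t\wedge\zeta}^{T\wedge\zeta}f(s,Y^n_s)\,ds+\int_{t\wedge\zeta}^{T\wedge\zeta}dV_s-\int_{t\wedge\zeta}^{T\wedge\zeta}dM^n_s$. Using $Y^n\to Y$ in $\DM^q$, the a priori bounds of Lemmas \ref{lm0.2} and \ref{lm0.3} (which give $\sup_n E(\int_0^{T\wedge\zeta}|f(s,Y^n_s)|\,ds)^q<\infty$), and (H1), (H3), I would show $\int_{t\wedge\zeta}^{T\wedge\zeta}f(s,Y^n_s)\,ds\to\int_{t\wedge\zeta}^{T\wedge\zeta}f(s,Y_s)\,ds$ in $L^q$, so that $M^n$ converges and the limit $M$ satisfies \eqref{eq3.1}; the uniform integrability of $M$ and $(Y,M)\in\DM^q\otimes\MM^q$ follow from Lemmas \ref{lm0.2}--\ref{lm0.3} and the defining conditional-expectation formula, exactly as at the end of the proof of Theorem \ref{th0.1}. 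Uniqueness is immediate from Corollary \ref{wn3.2}. The main obstacle I anticipate is the bookkeeping around the random terminal time: making sure that the truncated data $(f^n,V^n,0)$ genuinely give a solution whose restriction is consistent (this is where Lemma \ref{lm1.1} is essential), and that the tail quantity $\Psi^n$ is correctly identified so that the integrability hypothesis delivers $\Psi^n\to0$; once those are in place the limiting argument is a direct transcription of the fixed-time case.
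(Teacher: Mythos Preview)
Your consistency claim is false, and this is a genuine gap. You assert that Lemma~\ref{lm1.1} applied on $[0,n+k]$ with $\tau=\zeta\wedge n$ shows that $(Y^{n+k},M^{n+k})$ restricted to $[0,\zeta\wedge n]$ coincides with $(Y^n,M^n)$, and hence that $\delta Y_t=Y^m_t\mathbf{1}_{\{t>\zeta\wedge n\}}$. But Lemma~\ref{lm1.1} requires the driver and $dV$ to vanish on $(\tau,T]$, and for the pair $(Y^{n+k},M^{n+k})$ the driver is $\mathbf{1}_{[0,\zeta]}f$, which does \emph{not} vanish on $(\zeta\wedge n,n+k]$ when $\zeta>n$. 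In fact on the event $\{\zeta>n\}$ one has $Y^n_n=0$ while $Y^m_n$ is in general nonzero (there is still driver and $dV$ on $(n,\zeta\wedge m]$), so $Y^m\neq Y^n$ on $[0,n]$. A concrete check: take $f\equiv0$, $\zeta=+\infty$, $V$ deterministic increasing with $V_\infty<\infty$; then $Y^n_t=V_n-V_t$, so $Y^m_t-Y^n_t=V_m-V_n$ for all $t\le n$, which is nonzero whenever $V$ is strictly increasing on $[n,m]$.

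The paper does not attempt any such consistency. Instead, it applies the It\^o--Tanaka formula to $|\delta Y|$ on the whole of $[0,m]$ and then splits the resulting drift integral at $t=n$. On $[n,m]$ only $Y^m$ contributes (since $Y^n\equiv0$ there), and (H2) bounds this piece by $\int_{n\wedge\zeta}^{\zeta}|f(s,0)|\,ds+\int_{n\wedge\zeta}^{\zeta}d|V|_s$; on $[0,n]$ both solutions carry the \emph{same} indicator $\mathbf{1}_{[0,\zeta]}$, so the drift difference is $\mathbf{1}_{[0,\zeta]}(f(s,Y^m_s)-f(s,Y^n_s))$, whose product with $\hat\sgn(\delta Y_{s-})$ is nonpositive by (H2). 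This is exactly the extra use of monotonicity that your shortcut tries to avoid but cannot. Once you replace the false consistency step by this two-interval estimate (equations~(\ref{eq1.3})--(\ref{eq1.7}) in the paper), the rest of your outline---convergence in $\|\cdot\|_1$ and $\DM^q$, passage to the limit in the equation via (H1), (H3) and Lemmas~\ref{lm0.2}--\ref{lm0.3}, and the closedness of $M$---is correct and matches the paper.
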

\begin{dow}
By Theorem \ref{th0.1}, for each  $n\in\mathbb{N}$ there exists a
unique solution $(Y^{n},M^{n})$ of the BSDE
\begin{equation}
\label{eq3.07}
Y^{n}_{t}=\int_{t}^{n}\mathbf{1}_{[0,\zeta]}(s)f(s,Y^{n}_{s})\,ds
+\int_{t}^{n}\mathbf{1}_{[0,\zeta]}(s)\,dV_{s}-\int_{t}^{n}dM^{n}_{s},
\quad t\in [0,n]
\end{equation}
such that $(Y^n,M^n)\in\DM^{q}\otimes\MM^{q}$, $q\in (0,1)$, $M^n$
is uniformly integrable martingale and $Y^n$ is of class (D). Let
us put $(Y^{n}_{t},M^{n}_{t})=(0,M^{n}_{n})$ for $t\ge n$. Then by
Lemma \ref{lm1.1},
\begin{equation}
\label{T1}
(Y^{n}_{t},M^{n}_{t})=(Y^{n}_{t\wedge\zeta},M^{n}_{t\wedge\zeta}),\quad
t\ge0.
\end{equation}
For $m\ge n$ put $\delta Y=Y^{m}-Y^{n}$, $\delta M=M^{m}-M^{n}$
and
\begin{align*}
\varphi(t)&=\intot\mathbf{1}_{[0,\zeta\wedge m]}(s)f(s,Y^{m}_{s})\,ds
+\intot\mathbf{1}_{[0,\zeta\wedge m]}(s)\,dV_{s}\\&\quad
-\intot\mathbf{1}_{[0,\zeta\wedge n]}(s)f(s,Y^{n}_{s})\,ds
-\intot\mathbf{1}_{[0,\zeta\wedge n]}(s)\,dV_{s}.
\end{align*}
Then by the It\^o-Tanaka formula,
\[
|\delta Y_{t}|\le \int_{t}^{m}\hat\sgn(\delta Y_{s-})\,d\varphi(s)
-\int_{t}^{m}\hat\sgn(\delta Y_{s-})\,d\delta M_{s},\quad t\in
[0,m].
\]
Conditioning both sides of the above inequality with respect to
$\FF_{t}$ we get
\begin{equation}
\label{eq1.2} |\delta Y_{t}|\le E(\int_{t}^{m}\hat\sgn(\delta
Y_{s-}))\,d\varphi(s)|\FF_{t}),\quad t\in [0,m].
\end{equation}
Since for every $t\in [n,m]$,
\begin{equation}
\label{eq1.3} \int_{t}^{m}\hat\sgn(\delta Y_{s-})\,d\varphi(s)
=\int_{t\wedge\zeta}^{m\wedge\zeta}
\hat\sgn(Y^{m}_{s-})f(s,Y^{m}_{s})\,ds
+\int_{t\wedge\zeta}^{m\wedge\zeta}\hat\sgn(Y^{m}_{s-})\,dV_{s},
\end{equation}
using \cite[Lemma 6.1]{BDHPS} and (H2) we deduce from
(\ref{eq1.2}) that for every $q\in(0,1)$,
\begin{equation}
\label{eq1.4} E\sup_{n\le t\le m}|\delta Y_{t}|^{q}
\le\frac{1}{1-q}E(\int_{n\wedge\zeta}^{\zeta}|f(s,0)|\,ds
+\int_{n\wedge\zeta}^{\zeta}d|V|_{s})^{q}.
\end{equation}
Observe that for $t\in[0,n]$,
\begin{equation}
\label{eq1.5} \int_{t}^{m}\hat\sgn(\delta
Y_{s-})\,d\varphi(s)=\int_{n}^{m}\hat\sgn(\delta
Y_{s-})\,d\varphi(s)+\int_{t}^{n}\hat\sgn(\delta
Y_{s-})\,d\varphi(s)
\end{equation}
and
\begin{equation}
\label{eq1.6}
\varphi(t)=\intot \mathbf{1}_{[0,\zeta]}(s)
(f(s,Y^{m}_{s})-f(s,Y^{n}_{s}))\,ds.
\end{equation}
From (\ref{eq1.2}), (\ref{eq1.5}), (\ref{eq1.6}) and \cite[Lemma
6.1]{BDHPS} it follows that for every $q\in (0,1)$,
\begin{equation}
\label{eq1.7} E\sup_{0\le t\le n}|\delta Y_{t}|^{q}
\le\frac{1}{1-q}E(\int_{n\wedge\zeta}^{\zeta}|f(s,0)|\,ds
+\int_{n\wedge\zeta}^{\zeta}d|V|_{s})^{q}.
\end{equation}
Combining (\ref{eq1.4}) with  (\ref{eq1.7}) we see that for every
$q\in(0,1)$,
\begin{equation}
\label{eq1.8} E\sup_{t\ge 0}|\delta Y_{t}|^{q}\le
\frac{1}{1-q}E(\int_{n\wedge\zeta}^{\zeta}|f(s,0)|\,ds
+\int_{n\wedge\zeta}^{\zeta}d|V|_{s})^{q}.
\end{equation}
Using once again (\ref{eq1.3}), (\ref{eq1.5}), (\ref{eq1.6}) we
deduce from (\ref{eq1.2}) that
\[
\|\delta Y\|_{1}\le
\frac{1}{1-q}E(\int_{n\wedge\zeta}^{\zeta}|f(s,0)|\,ds
+\int_{n\wedge\zeta}^{\zeta} d|V|_{s})^{q}.
\]
Therefore there exists $Y\in\DM^{q}$, $q\in (0,1)$, such that $Y$
is of class (D), $Y^{n}\rightarrow Y$ in the norm $\|\cdot\|_{1}$
and in $\DM^{q}$ for  $q\in (0,1)$. By the latter convergence and
(\ref{T1}), $Y_{t\wedge\zeta}\rightarrow 0$ as $t\rightarrow
\infty$. By Lemma \ref{lm0.3}, for any $m\ge n\ge T>0$,
\[
E[\delta M]_{T}^{q/2} \le c_{q}E\sup_{0\le t\le T}|\delta
Y_{t}|^{q},\quad q\in (0,1).
\]
From this and (\ref{eq1.8}) it follows  that there exists
$M\in\MM$ such that for every $q\in(0,1)$ and $T>0$,
\begin{equation}
\label{eq1.10} E[M^{n}-M]_{T}^{q/2}\rightarrow0
\end{equation}
as $n\rightarrow\infty$. By (H1), (H3), (\ref{eq1.8}) and the
Lebesgue dominated convergence theorem,
\begin{equation}
\label{eq1.11} \into|f(s,Y^{n}_{s})-f(s,Y_{s})|\,ds\rightarrow 0
\end{equation}
as $n\rightarrow\infty$. By the definition of the processes
$(Y^{n},M^{n})$, for every $T>0$,
\[
Y^{n}_{t}=Y^{n}_{T\wedge\zeta}+\intz f(s,Y^{n}_{s})\,ds
+\intz dV_{s}-\intz dM^{n}_{s},\quad t\ge 0.
\]
Therefore letting $n\rightarrow\infty$ and using
(\ref{eq1.8})--(\ref{eq1.11}) we see that $(Y,M)$ satisfies
(\ref{eq3.1}). What is left is to show integrability properties of
$M$.  That $M\in\MM^{q}$, $q\in (0,1)$, follows from the fact that
$Y\in\DM^{q}$ for $q\in (0,1)$ and Lemma \ref{lm0.3}. By Lemmas
\ref{lm0.2} and \ref{lm0.3},
$E\int_{0}^{\zeta}|f(s,Y_{s})|\,ds<\infty$. Using this, the fact
that $Y$ is of class (D) and $Y_{t\wedge\zeta}\rightarrow0$ as
$t\rightarrow\infty$ it is easy to deduce from (\ref{eq3.1}) that
$M$ has the form
\[
M_{t}=E(\int_{0}^{\zeta} f(s,Y_{s})\,ds
+\int_{0}^{\zeta} dV_{s}|\FF_{t})-Y_{0},\quad t\ge 0.
\]
Thus, $M$ is closed and hence uniformly integrable.
\end{dow}
\medskip

Let $E$ be a Radon space (see \cite{Sharpe}) and let
$\BX=(\Omega,\FF,\FF_t,X,\theta_t,\zeta,P_{x},)$ be a right
process (with translation operators $\theta_t$ and life-time
$\zeta$) on $E$. Suppose we are given a measurable function
$f:E\times\mathbb{R}\rightarrow\BR$ and a finite variation
additive functional $V$ of $\BX$. Then for $x\in E$, $r\ge 0$ we
put $\zeta^{r}=\zeta+r, V^{r}=V_{\cdot-r}$ and we define
$f^{r}:[r,\infty)\times\Omega\times\BR\rightarrow\BR$ by putting
$f^{r}(t,y)(\omega)=f(X_{t-r}(\omega),y)$.

\begin{stw}
\label{prop2.1} Assume that for every $r\ge0$ the function $f^{r}$
satisfies \mbox{\rm(H2)} and that there exists a solution
$(Y^{r,x},M^{r,x})=(Y^{r},M^{r})$ of BSDE$(\zeta^{r},
f^{r}+dV^{r})$ on $[r,\infty)$, defined on the space
$(\Omega,\FF,\FF_{\cdot-r},P_{x})$, such that $Y^{r}$ is of class
\mbox{\rm(D)}. Then for every $h\ge 0$,
\begin{enumerate}
\item[\rm(i)]$(M^{r+h}_{t}\circ\theta_{h},\FF_{t-r}, t\ge r+h)$ is a
local martingale under $P_{x}$,
\item[\rm(ii)]$(Y^{r+h}_{t}\circ\theta_{h}, M^{r+h}_{t}\circ\theta_{h})
=(Y^{r}_{t},M^{r}_{t})$, $t\ge r+h$, $P_{x}$-\mbox{\rm a.s.},
\item[\rm(iii)] $(Y^{r+h}_{t+h},M^{r+h}_{t+h})=(Y^{r}_{t},M^{r}_{t})$,
$t\ge r\ge 0$, $h\ge 0$, $P_{x}$-\mbox{\rm a.s.}.
\end{enumerate}
\end{stw}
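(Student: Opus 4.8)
The plan is to reduce each of (i)--(iii) to the uniqueness statement of Corollary \ref{wn3.2} after reparametrising time, the only genuinely probabilistic ingredient being the Markov property of $\BX$, which enters only in (i). First I would record how the data of the BSDE transform under $\theta_h$: using $X_s\circ\theta_h=X_{s+h}$, $\zeta\circ\theta_h=\zeta-h$ and the additivity of $V$ (so that $V_s\circ\theta_h=V_{s+h}-V_h$), one gets $\zeta^{r+h}\circ\theta_h=\zeta^r$, $f^{r+h}(s,y)\circ\theta_h=f(X_{s-r},y)=f^r(s,y)$ and $dV^{r+h}_s\circ\theta_h=dV^r_s$ as random measures in $s$ (the constant $V_h$ is killed by $d$). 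Composing (\ref{eq3.1}) for $(Y^{r+h},M^{r+h})$ with $\theta_h$ therefore yields that $(Z_t,N_t):=(Y^{r+h}_t\circ\theta_h,M^{r+h}_t\circ\theta_h)$, $t\ge r+h$, satisfies
\[
Z_t=Z_{T\wedge\zeta^r}+\int_{t\wedge\zeta^r}^{T\wedge\zeta^r}f^r(s,Z_s)\,ds+\int_{t\wedge\zeta^r}^{T\wedge\zeta^r}dV^r_s-\int_{t\wedge\zeta^r}^{T\wedge\zeta^r}dN_s,\quad t\in[r+h,T],
\]
with $Z_{t\wedge\zeta^r}\to 0$. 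In the same way, substituting $t\mapsto t+h$ in the BSDE for $(Y^{r+h},M^{r+h})$ with no shift of $\omega$ (using $\zeta^{r+h}=\zeta^r+h$, $f^{r+h}(s+h,y)=f^r(s,y)$, $V^{r+h}_{s+h}=V^r_s$) shows that $(Y^{r+h}_{t+h},M^{r+h}_{t+h})$, $t\ge r$, solves the same equation, now on all of $[r,T]$, with $Y^{r+h}_{(t+h)\wedge\zeta^{r+h}}=Y^{r+h}_{h+(t\wedge\zeta^r)}\to0$ as $t\to\infty$.

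\emph{Proof of (iii).} Plain time-reindexing turns the $(\FF_{\cdot-r-h})$-local martingale $M^{r+h}$ into the $(\FF_{\cdot-r})$-local martingale $t\mapsto M^{r+h}_{t+h}$ under $P_x$, and $t\mapsto Y^{r+h}_{t+h}$ is of class (D) because $Y^{r+h}$ is. Hence $(Y^{r+h}_{\cdot+h},M^{r+h}_{\cdot+h})$ is a solution of BSDE$(\zeta^r,f^r+dV^r)$ on $[r,\infty)$ on the space $(\Omega,\FF,\FF_{\cdot-r},P_x)$, with first component of class (D). The time-shifted form of Corollary \ref{wn3.2} gives uniqueness of such solutions, so this pair equals $(Y^r,M^r)$; that the martingale parts (and not only the $Y$-parts) agree follows from the equation together with the normalisation $M^{r+h}_{r+h}=M^r_r=0$ inherited from the construction in Theorem \ref{th1.2}. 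This is (iii).

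\emph{Proof of (i) and (ii).} For (i) I would invoke the Markov property of $\BX$. Fix a localising sequence $\{\sigma_k\}$ for $M^{r+h}$, chosen measurable in the starting point (which is possible because the solution of Theorem \ref{th1.2} is built as a limit of conditional expectations, hence has a jointly measurable version). The random times $\sigma_k\circ\theta_h+h$ are $(\FF_{\cdot-r})$-stopping times by $\theta_h^{-1}\FF_u\subseteq\FF_{u+h}$, and the martingale identity for $N$ stopped at these times is obtained by conditioning on $\FF_h$ and using $E_x[\,\cdot\circ\theta_h\,|\,\FF_h]=E_{X_h}[\,\cdot\,]$ together with optional sampling of $M^{r+h}$ under $P_{X_h}$; letting $k\to\infty$ gives (i). Granting (i), the pair $(Z,N)$ from the algebraic step is a genuine solution of BSDE$(\zeta^r,f^r+dV^r)$ on $[r+h,\infty)$ on $(\Omega,\FF,\FF_{\cdot-r},P_x)$, and $Z=Y^{r+h}\circ\theta_h$ is of class (D) under $P_x$, again by the Markov property since $Y^{r+h}$ is of class (D) under every $P_y$. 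The uniqueness argument of Proposition \ref{th1.1}/Corollary \ref{wn3.2} localises verbatim to $[r+h,\infty)$: one runs the It\^o--Tanaka estimate for $t\ge r+h$ only and lets $T\to\infty$, using class (D) and $Y_{t\wedge\zeta^r}\to0$. Hence $(Z,N)$ coincides on $[r+h,\infty)$ with the restriction of $(Y^r,M^r)$, which is (ii).

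\emph{Main obstacle.} Everything involving reindexing and the uniqueness already available from Sections \ref{sec2}--\ref{sec3} is routine; the delicate point is (i), i.e.\ transporting the local-martingale property through $\theta_h$ into the enlarged filtration $\FF_{\cdot-r}$. The care needed there concerns the $x$-dependence of $(Y^{r+h},M^{r+h})$: the Markov property can only be applied to a version that is jointly measurable in starting point and $\omega$, so one should work with the version coming from the approximation scheme of Theorem \ref{th1.2}. A lesser nuisance is the bookkeeping at the lifetime (the behaviour on $\{\zeta\le h\}$, where $\theta_h\omega$ is the cemetery), which is absorbed into the standing convention that all processes are stopped at $\zeta$ and vanish there.
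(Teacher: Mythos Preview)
Your overall strategy matches the paper's: reduce (ii) and (iii) to the uniqueness Corollary~\ref{wn3.2} after verifying that the shifted/reindexed pair satisfies the same BSDE, with (i) supplying the local-martingale ingredient needed for (ii). The paper proves (i) first, then (ii), and omits (iii) as analogous to (ii).

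The one substantive difference is in (i). Rather than building a localising sequence by hand and invoking the Markov property explicitly---which forces you to worry about joint measurability of $(Y^{r+h},M^{r+h})$ in the starting point---the paper simply reindexes $N_t:=M^{r+h}_{t+r+h}$ so that $(N_t,\FF_t,t\ge0)$ is a local martingale under $P_x$, and then cites \cite[Proposition~50.19]{Sharpe} to conclude that $(N_{t-h}\circ\theta_h\,\mathbf{1}_{[h,\infty)}(t),\FF_t,t\ge0)$ is again a local martingale. That general fact about right processes packages exactly the Markov-property argument you sketch, so the citation dispatches your ``main obstacle'' without any appeal to the construction in Theorem~\ref{th1.2}. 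Your direct route is not wrong, but it imports hypotheses (a jointly measurable version, solutions under every $P_y$) that the black-box citation renders unnecessary at this point.

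On the lifetime bookkeeping: your identity $\zeta\circ\theta_h=\zeta-h$ holds only on $\{\zeta>h\}$. The paper writes the shifted lifetime as $\zeta^{r,h}=(\zeta-h)^{+}+r+h$ and then argues the two cases separately: on $\{\zeta\ge h\}$ one has $\zeta^{r,h}=\zeta^r$, while on $\{\zeta\le h\}$ both $T\wedge\zeta^r=t\wedge\zeta^r$ and $T\wedge\zeta^{r,h}=t\wedge\zeta^{r,h}$ for $t\in[r+h,T]$, so all integrals vanish and the equations trivially agree. This is precisely the ``lesser nuisance'' you anticipated, handled explicitly rather than by convention.
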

\begin{dow}
(i) Let $N_{t}=M^{r+h}_{t+r+h}$. By the assumption,
$(N_{t},\FF_{t}, t\ge 0)$ is a local martingale. Hence, by
\cite[Proposition 50.19]{Sharpe},
$(N_{t-h}\circ\theta_{h}\mathbf{1}_{[h,+\infty)}(t),\FF_{t},t\ge0)$
is again a local martingale. But
$N_{t-h}\circ\theta_{h}=M^{r+h}_{t+r}\circ\theta_{h},\, t\ge h$
which implies (i).

(ii) By the assumption,
\[
Y^{r}_{t}=Y^{r}_{T}+\int_{t\wedge\zeta^{r}}^{T\wedge\zeta^{r}}
f(X_{s-r},Y^{r}_{s})\,ds
+\int_{t\wedge\zeta^{r}}^{T\wedge\zeta^{r}}dV^{r}_{s}
-\int_{t\wedge\zeta^{r}}^{T\wedge\zeta^{r}}dM^{r}_{s},\quad
t\in [r,T]
\]
and
\begin{align*}
Y^{r+h}_{t}&=Y^{r+h}_{T}+\int_{t\wedge\zeta^{r+h}}^{T\wedge\zeta^{r+h}}
f(X_{s-r-h},Y^{r+h}_{s})\,ds\\
&\quad+\int_{t\wedge\zeta^{r+h}}^{T\wedge\zeta^{r+h}} dV^{r+h}_{s}
-\int_{t\wedge\zeta^{r+h}}^{T\wedge\zeta^{r+h}}dM^{r+h}_{s},\quad
t\in [r+h,T].
\end{align*}
Hence
\begin{align*}
Y^{r+h}_{t}\circ\theta_{h}&=Y^{r+h}_{T}\circ\theta_{h}
+\int_{t\wedge\zeta^{r,h}}^{T\wedge\zeta^{r,h}}
f(X_{s-r},Y^{r+h}_{s}\circ\theta_{h})\,ds\\
&\quad+\int_{t\wedge\zeta^{r,h}}^{T\wedge\zeta^{r,h}}
d(V^{r+h}_{s}\circ\theta_{h})
-\int_{t\wedge\zeta^{r,h}}^{T\wedge\zeta^{r,h}}
d(M^{r+h}_{s}\circ\theta_{h}),\quad t\in [r+h,T],
\end{align*}
where $\zeta^{r,h}=(\zeta-h)^{+}+r+h$. By part (i),
$(M^{r+h}_{t}\circ\theta_{h},\FF_{t-r}, t\ge r+h)$ is a local
martingale. Since $V$ is additive,
$d(V^{r+h}_{s}\circ\theta_{h})=d V^{r}_{s}$. Observe also that if
$\zeta\ge h$ then $\zeta^{r,h}=\zeta^{r}$ and if $\zeta\le h$ then
$T\wedge\zeta^{r}=t\wedge\zeta^{r},
T\wedge\zeta^{h,r}=t\wedge\zeta^{h,r}$ for $t\in [r+h,T]$.
Therefore,
\begin{align*}
Y^{r+h}_{t}\circ\theta_{h}&=Y^{r+h}_{T}\circ\theta_{h}
+\int_{t\wedge\zeta^{r}}^{T\wedge\zeta^{r}}
f(X_{s-r},Y^{r+h}_{s}\circ\theta_{h})\,ds\\
&\quad+\int_{t\wedge\zeta^{r}}^{T\wedge\zeta^{r}} dV^{r}_{s}
-\int_{t\wedge\zeta^{r}}^{T\wedge\zeta^{r}}d(M^{r+h}_{s}\circ\theta_{h}),
\quad t\in [r+h,T].
\end{align*}
We see that $(Y^{r},M^{r})$, $(Y^{r+h}\circ\theta_{h},
M^{r+h}\circ\theta_{h})$ are solutions of
BSDE$(\zeta^{r},f^{r}+dV^{r})$ on $[r+h,+\infty)$ defined on the
space $(\Omega,\FF,\FF_{\cdot-r},P_{x})$. Therefore (ii) follows
from Corollary \ref{wn3.2}. Since the proof of (iii) is analogous
to that of (ii), we omit it.
\end{dow}

\begin{uw}
\label{uw2.1} Let $B$ be a Borel subset of $E$ and for $x\in B$
let the pair $(Y^x,M^x)$ be a unique solution of BSDE$(\xi,f+dV)$
of Theorem \ref{th1.2} defined on the filtered probability space
$(\Omega,\FF,\FF_t,P_x)$. Then there exists a pair $(Y,M)$ of
$(\FF_t)$ adapted c\`adl\`ag processes such that $(Y_{t},M_{t})
=(Y^{x}_{t},M^{x}_{t})$, $t\ge 0$, $P_{x}$-a.s. for every $x\in
B$. This follows from the construction of solutions $(Y^x,M^x)$
and repeated application of Lemmas A.3.3 and A.3.5 in
\cite{Fukushima}. Indeed, let $(Y^{x,n},M^{x,n})$ be a solution of
(\ref{eq3.07}) on $(\Omega,\FF,\FF_t,P_x)$. Since
$Y^{x,n}\rightarrow Y^x$ in probability $P_x$ for $x\in B$, to
prove the desired result it suffices to show that there exists a
pair $(Y^n,M^n)$ of $(\FF_t)$ adapted c\`adl\`ag processes such
that $(Y^n_{t},M^n_{t})=(Y^{x,n}_{t},M^{x,n}_{t})$, $t\ge 0$,
$P_{x}$-a.s. for every $x\in B$. But the solution of
(\ref{eq3.07}) is a limit in probability of solutions of equations
considered in Lemma \ref{lm0.4} (see the proof of Theorem
\ref{th0.1}), and solutions of equations considered in Lemma
\ref{lm0.4} are limits of Picard iterations of solutions of linear
equations of the form (\ref{eq0.2}). Using \cite[Lemma
A.3.5]{Fukushima} one can find independent of $x$ solutions of
these linear equations. Consequently, using \cite[Lemma
A.3.3]{Fukushima} we can find $(Y^n,M^n)$ having the desired
properties.
\end{uw}

\nsubsection{Probabilistic solutions of equations with measure
data} \label{sec4}

In the rest of the paper we assume that
\begin{itemize}
\item $E$
is a locally compact separable metric space and $m$ is a positive
Radon measure on $E$ such that $\mbox{supp}[m]=E$, i.e. $m$ is a
nonnegative Borel measure on $E$ finite on compact sets and
strictly positive on nonempty open sets,
\item
$(\mathcal{E},D[\EE])$ is a regular Dirichlet form on
$L^{2}(E;m)$.
\end{itemize}

Let $D[\EE]$ be a dense linear subspace of $L^2(E;m)$ and let
$\EE$ be a nonnegative symmetric bilinear form on $D[\EE]\times
D[\EE]$. Set $\EE_{\alpha}(u,v)=\EE(u,v)+\alpha(u,v)$ for $u,v\in
D[\EE]$, $\alpha>0$.

Let us recall that $(\EE,D[\EE])$ is called a Dirichlet form if it
is closed, i.e. $D[\EE]$ is complete under the norm $\EE_{1}$, and
Markovian,  i.e. if $u\in D[\EE]$ and $v$ is a normal contraction
of $u$ then $v\in D[\EE]$ and $\EE(u,u)\le\EE(v,v)$ (a function
$v$ is called a normal contraction of $u$ if
$|v(x)-v(y)|\le|u(x)-u(y)|$ and $|v(x)|\le |u(x)|$ for $x,y\in
E$).

A Dirichlet form $(\EE,D[\EE])$ is called regular if the space
$D[\EE]\cap C_{0}(E)$ is dense in $D[\EE]$ with respect to the
norm $\EE_{1}$ and dense in $C_{0}(E)$ with respect to the uniform
convergence topology, where $C_{0}(E)$ is the space of continuous
functions on $E$ with compact support.

Let $\mbox{cap}:2^{E}\rightarrow \BR^{+}$ denote the Choquet
capacity associated with the  form $(\EE,D[\EE])$ (see
\cite[Chapter 2]{Fukushima}). In the sequel we  say that a
statement depending on $x\in E$ holds quasi-everywhere (``q.e.''
for short) on $E$ if there is a set $B\subset E$ of capacity zero
such that the statement is true for every $x\in B$.

A function $u:E\rightarrow\BR$ is called quasi-continuous if for
every $\varepsilon>0$ there exists an open set $U\subset E$ such
that $\mbox{cap}(U)<\varepsilon$ and $u_{|E\setminus U}$ is
continuous. It is known that each $u\in D[\EE]$ admits a
quasi-continuous $m$-version (see \cite[Theorem
2.1.3]{Fukushima}). In the sequel we always consider a
quasi-continuous version of $u$ if it has such a version.

Let $\BX=(\Omega,\FF,\FF_t,X,\theta_t,P_{x})$ be a (unique) Hunt
process associated with $(\EE,D[\EE])$ (see Chapter 7 and Appendix
A.2 in \cite{Fukushima}). In what follows by $\zeta$ we denote the
life-time of $X$, i.e. $\zeta=\inf\{t\ge 0; X_{t}=\Delta\}$, where
$\Delta$ is the one-point compactification of $E$. If $E$ is
already compact, $\Delta$ is adjoint as an isolated point.

For $B\subset E$ we set
\[
\sigma_{B}(\omega)=\inf\{t>0; X_{t}(\omega)\in B\}.
\]

A set $B\subset E$ is called nearly Borel if for each finite
nonnegative Borel measure $\nu$ on $E$ there exist Borel sets
$B_{1}, B_{2}$ such that $B_{1}\subset B\subset B_{2}$ and
$P_{\nu}(\exists\, t\ge 0;\, X_{t}\in B_{2}\setminus B_{1})=0$,
where $P_{\nu}(\cdot)=\int P_x(\cdot)\,\nu(dx)$. A set $N\subset
E$ is called exceptional if there exists a nearly Borel set
$\tilde{N}$ such that $N\subset \tilde{N}$ and
$P_{m}(\sigma_{\tilde{N}}<\infty)=0$. By \cite[Theorem
4.2.1]{Fukushima}, a set $N\subset E$ is exceptional iff
$\mbox{cap}(N)=0$.

Let $\BB(E)$ ($\mathcal{B}^{n}(E)$) denote the space of all Borel
(nearly Borel) measurable functions $u:E\rightarrow\BR$ and let
$\mathcal{C}$ denote the space of all $u\in\BB^{n}(E)$ for which
there exists an exceptional Borel set $B\subset E$ such that the
process $t\rightarrow u(X_{t})$ is right continuous and
$t\rightarrow u(X_{t-})$ is left continuous on $[0,\zeta)$ under
$P_{x}$ for every $x\in B^{c}$. By \cite[Theorem 4.2.2]{Fukushima}
and \cite[Theorem 5.29]{MR}, $u\in\BB^{n}(E)$ is quasi-continuous
iff it belongs to $\mathcal{C}$.

An increasing sequence $\{F_{n}\}$ of closed subsets of $E$ is
called a generalized nest if
\begin{equation}
\label{eq4.01} \lim_{n\rightarrow\infty}\mbox{cap}(K\setminus
F_{n})=0
\end{equation}
for any compact set $K\subset E$. $\{F_n\}$ is called a nest if
(\ref{eq4.01}) holds with $E$ in place of $K$.

Recall that a Borel measure $\mu$ on $E$ is called smooth if its
total variation $|\mu|$ charges no set of zero capacity and there
exists a generalized nest $\{F_{n}\}$ such that
$|\mu|(F_{n})<\infty$ for every $n\in\mathbb{N}$. The set of all
nonnegative smooth measures on $E$ will be denoted by $S$.

It is known (see \cite[Chapter 5]{Fukushima}) that for every
measure $\mu\in S$ there exists a unique positive continuous
additive functional (PCAF) $A$ of $\BX$ which is in the Revuz
correspondence with  $\mu$, i.e. for every bounded nonnegative
$f\in\BB(E)$,
\begin{equation}
\label{eq4.2} \lim_{t\searrow 0}
\frac{1}{t}E_{m}\int_{0}^{t}f(X_{s})\,dA_{s}
=\int_{E}f(x)\,\mu(dx).
\end{equation}
Moreover, any PCAF $A$ of $\BX$ admits a unique measure $\mu\in
S$, which is called the Revuz measure of $A$, such that
(\ref{eq4.2}) is satisfied. Thus, the Revuz correspondence
(\ref{eq4.2}) provides probabilistic description of $S$.

From the analytic description of $S$ it is easy to see that $S$
contains all positive Radon measures on $E$ charging no set of
zero capacity.  The following example shows that in general the
inclusion is strict.

\begin{prz}
(see \cite[Example 5.1.1]{Fukushima}) Let $E=\BR^d$ and let $m$ be
the $d$-dimensional Lebesgue measure. Consider the form
$\EE(u,v)=\frac12\sum^d_{i=1}\int_{\BR^d}\frac{\partial
u}{\partial x_i}\frac{\partial v}{\partial x_i}\,dx$,
$D[\EE]=H^1(\BR^d)$ on $L^2(\BR^d;dx)$. Then the process $\BX$
associated with $(\EE,D[\EE])$ is a standard Brownian motion on
$\BR^d$. \\
(i) Assume that $d\ge2$ and let $\mu(dx)=g(x)\,dx$, where
\[
g(x)=|x|^{-\alpha},\quad x\in \BR^d
\]
for some $\alpha\ge d$. Then  $\mu$ is smooth but not Radon. The
PCAF of $\BX$ corresponding to $\mu$ has the form
\[
A_t=\int^t_0g(X_s)\,ds,\quad t\ge0.
\]
(ii) If $d=1$ then $\mu\in S$ iff it is a positive Radon measure.
The corresponding PCAF is given by
\[
A_t=\int_{\BR}L^a_t\,\mu(da),\quad t\ge0,
\]
where $\{L^a_t,t\ge0,a\in\BR\}$ denotes the continuous (in the
variables $t$ and $a$) version of the local time of $X$.
\end{prz}

Let us also note that one can construct smooth measures that are
``nowhere Radon" in the sense that $\mu(U)=\infty$ for every
nonempty open set $U\subset E$. Many interesting examples of such
measures are to be find in \cite{AM}.

In the sequel, given a nonnegative Borel measure $\mu$ on $E$ and
$f\in\BB^{+}(E)$ we put
\[
\int_{E} f\,d\mu=\langle f,\mu\rangle.
\]
By $f\cdot\mu$ we denote the Borel measure such that
$\frac{df\cdot\mu}{d\mu}=f$.

\begin{lm}
\label{lm2.1} Let $A$ be a PCAF of $\BX$. Then for every stopping
time $\tau$, $E_{x}A_{\tau}$ is finite for $m$-a.e. $x\in E$ iff
$E_{x}A_{\tau}$ is finite for q.e. $x\in E$.
\end{lm}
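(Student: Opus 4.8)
The ``if'' direction is immediate: every set of zero capacity is $m$-negligible, so finiteness of $E_{x}A_{\tau}$ for q.e.\ $x$ forces finiteness for $m$-a.e.\ $x$. For the converse, put $u(x)=E_{x}A_{\tau}\in[0,+\infty]$ and $B=\{x\in E:u(x)=+\infty\}$. Then $B$ is nearly Borel (universal measurability of $x\mapsto E_{x}A_{\tau}$) and, by hypothesis, $m(B)=0$; by \cite[Theorem 4.2.1]{Fukushima} it suffices to show that $B$ is exceptional, i.e.\ (after passing to a nearly Borel version of $B$) that $P_{m}(\sigma_{B}<\infty)=0$. Since a PCAF does not increase on $[\zeta,\infty)$ and $B\subset E$, we may assume $\tau\le\zeta$, so that $\{\sigma_{B}<\infty\}=\{\sigma_{B}<\zeta\}$.

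The plan for the converse is to combine the additivity of $A$ with the strong Markov property. For a stopping time $\sigma$ one has, on $\{\sigma<\tau\}$, $A_{\tau}=A_{\sigma}+A_{\tau-\sigma}\circ\theta_{\sigma}$; when $\tau$ is a terminal time (e.g.\ $\tau=\zeta$, or a first hitting or exit time) one moreover has $\tau-\sigma=\tau\circ\theta_{\sigma}$ on $\{\sigma<\tau\}$, hence $A_{\tau}-A_{\sigma}=A_{\tau}\circ\theta_{\sigma}$ there, and the strong Markov property yields $E_{x}[\mathbf{1}_{\{\sigma<\tau\}}(A_{\tau}-A_{\sigma})]=E_{x}[\mathbf{1}_{\{\sigma<\tau\}}u(X_{\sigma})]$. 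Choosing a finely closed nearly Borel version of $B$ and taking $\sigma=\sigma_{B}$, so that $X_{\sigma_{B}}\in B$ on $\{\sigma_{B}<\zeta\}$ and $u(X_{\sigma_{B}})=+\infty$ there, we obtain $u(x)=E_{x}A_{\tau\wedge\sigma_{B}}+E_{x}[\mathbf{1}_{\{\sigma_{B}<\tau\}}u(X_{\sigma_{B}})]$, which equals $+\infty$ as soon as $P_{x}(\sigma_{B}<\tau)>0$. Hence $P_{x}(\sigma_{B}<\tau)=0$ whenever $u(x)<\infty$, that is for $m$-a.e.\ $x$. For $\tau=\zeta$ this reads $P_{m}(\sigma_{B}<\zeta)=0$, which is exactly the exceptionality of $B$; equivalently, the excessive function $h(x)=P_{x}(\sigma_{B}<\zeta)$ vanishes $m$-a.e.\ and hence q.e., so $\mbox{cap}(B)=0$.

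The step I expect to be the main obstacle is the passage from terminal times to an arbitrary finite stopping time $\tau$: then $\tau-\sigma$ need not be of the form $\rho\circ\theta_{\sigma}$, $u=E_{\cdot}A_{\tau}$ need not be excessive, and the identity above is no longer directly available. Two natural ways to deal with this: (i) lift everything to the space--time process $\hat{\BX}$ on $\BR_{+}\times E$ — a PCAF of $\BX$ induces a PCAF of $\hat{\BX}$, and capacity and exceptionality transfer under the spatial projection $\BR_{+}\times E\to E$ — where, after a suitable enlargement, $\tau$ is represented by a terminal time and the argument above applies verbatim; or (ii) approximate $A$ and $\tau$ by simpler objects for which $u$ is excessive and pass to the limit, using that an increasing limit of excessive functions is excessive. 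Either route reduces the general case to the terminal-time case treated above, completing the proof.
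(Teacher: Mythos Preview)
Your core argument---strong Markov plus additivity of the PCAF to show that the set $B=\{E_{\cdot}A_{\tau}=\infty\}$ is not hit from $m$-a.e.\ starting point---is exactly the paper's. The paper differs in one technical choice: rather than working with $\sigma_{B}$ directly, it fixes an arbitrary compact $K\subset B$, shows $P_{m}(\sigma_{K}<\infty)=0$, and then invokes the Choquet property of capacity to conclude $\mathrm{cap}(B)=0$. This sidesteps the step in your argument where you ``choose a finely closed nearly Borel version of $B$'' so that $X_{\sigma_{B}}\in B$ on $\{\sigma_{B}<\zeta\}$; you have not justified that such a version exists, and in general it need not. Compact $K$ are closed (hence finely closed) for free, which is what makes the paper's variant clean.

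On the terminal-time issue you raise: you are right that the identity $A_{\tau}\circ\theta_{\sigma}=A_{\tau}-A_{\sigma\wedge\tau}$ used in the key step requires $\sigma+\tau\circ\theta_{\sigma}=\tau$ on $\{\sigma<\tau\}$, i.e.\ that $\tau$ be terminal. The paper's displayed chain of equalities uses exactly this identity without comment, so the paper is no more general than your terminal-time argument; in practice the lemma is invoked only with $\tau=\zeta$. Your proposed extensions to arbitrary $\tau$ are not convincing as written: route~(i) does not work as stated, since an arbitrary $(\FF_{t})$-stopping time has no reason to become a terminal time for the space--time process (space--time makes deterministic times terminal, not random ones), and route~(ii) is too vague to assess. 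If you want a statement for general $\tau$, a separate argument is needed; for the purposes of the paper, the terminal case $\tau=\zeta$ suffices.
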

\begin{dow}
Sufficiency follows from the definition of the capacity. To prove
necessity let us assume that $E_{x}A_{\tau}<\infty$ for  $m$-a.e.
$x\in E$ and set $B=\{x\in E; E_{x}A_{\tau}<\infty\}$. Since $\BX$
is a Hunt process, $B$ is a nearly Borel set. Let $K$ be a compact
set such that $K\subset B$. Since $\BX$ is strong Markov and  $A$
is additive, for $m$-a.e. $x\in E$ we have
\begin{align*}
P_{x}(\sigma_{K}<\infty)
&=P_{x}(E_{X_{\sigma_{K}}}A_{\tau}=\infty)
=P_{x}(E_{x}(A_{\tau}\circ\theta_{\sigma_{K}}|\FF_{\sigma_{K}})
=\infty)\\
&=P_{x}(E_{x}(A_{\tau}-A_{\sigma_{k}\wedge\tau}|\FF_{\sigma_{K}})
=\infty)=0.
\end{align*}
Thus, $P_{m}(\sigma_{K}<\infty)=0$ or, equivalently,
$\mbox{cap}(K)=0$. Since  this holds for arbitrary compact set
$K\subset B$ and $\mbox{cap}$ is a Choquet capacity,
$\mbox{cap}(B)=0$.
\end{dow}

\begin{lm}
\label{lm2.2} Assume that $A$ is PCAF of $\BX$ such that
$E_{x}A_{\zeta}<\infty$ for $m$-a.e. $x\in E$. Then the function
\[
u(x)=E_{x}A_{\zeta},\quad x\in E
\]
is quasi-continuous.
\end{lm}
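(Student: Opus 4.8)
The plan is to exhibit $u$ as a quasi-continuous function by approximating the additive functional $A$ by truncated functionals whose potentials are well-behaved, and then invoking the characterization of quasi-continuity via the process $\BX$ (membership in the class $\mathcal{C}$). First I would reduce to the situation where the potential is bounded. For $n\in\BN$ set $A^n_t=A_{t}\wedge n$ or, better, let $\tau_n$ be the first time $A$ exceeds $n$ and consider $A$ stopped at $\tau_n$; equivalently one can look at the measures $\mu_n=\mathbf{1}_{F_n}\cdot\mu$ associated with a generalized nest $\{F_n\}$ with $|\mu|(F_n)<\infty$, whose PCAFs $A^n$ satisfy $A^n_t\nearrow A_t$. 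The key classical fact (see \cite[Chapter 5]{Fukushima}) is that the $0$-order potential $u_n(x)=E_x A^n_\zeta$ of such a "nice" PCAF is quasi-continuous — indeed $t\mapsto u_n(X_t)$ is right-continuous on $[0,\zeta)$ because $u_n(X_t)=E_x(A^n_\zeta-A^n_t\,|\,\FF_t)$ is (a càdlàg version of) a supermartingale of class (D), and right-continuity of the additive functional transfers to the potential. Hence each $u_n\in\mathcal{C}$.

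Next I would pass to the limit. By monotone convergence $u_n(x)\nearrow u(x)=E_xA_\zeta$ for every $x$, and by hypothesis $u(x)<\infty$ for $m$-a.e. $x$, hence (using additivity of $A$ and the strong Markov property exactly as in Lemma \ref{lm2.1}) $u(x)<\infty$ for q.e. $x\in E$. The remaining point is that an increasing q.e.-finite limit of quasi-continuous functions is quasi-continuous. I would argue this probabilistically: fix an exceptional Borel set $B$ off which all $u_n$ lie in $\mathcal{C}$ and $u$ is finite, and off which $X$ never hits $B$ (shrinking $B$ if necessary). For $x\in B^c$ the processes $t\mapsto u_n(X_t)$ are right-continuous and increase to $t\mapsto u(X_t)$; moreover $u_n(X_t)=E_x(A^n_\zeta - A^n_t\mid\FF_t)$ so $\sup_n u_n(X_t)=E_x(A_\zeta-A_t\mid\FF_t)=u(X_t)$ by monotone convergence for conditional expectations, and this limiting process is itself a càdlàg supermartingale (being a potential of class (D)). Thus one can choose a càdlàg version of $t\mapsto u(X_t)$, which forces $u\in\mathcal{C}$, and therefore $u$ is quasi-continuous by the cited equivalence \cite[Theorem 4.2.2]{Fukushima}, \cite[Theorem 5.29]{MR}.

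The main obstacle — and the step requiring the most care — is the uniform-over-$x$ choice of a càdlàg modification of $t\mapsto u(X_t)$: a priori each $u_n$ is quasi-continuous with its own exceptional set, and right-continuity of an increasing limit of right-continuous functions is automatic but left limits (hence the càdlàg property, needed for membership in $\mathcal{C}$) are not. This is precisely why the supermartingale/potential structure is essential: $u(X_t)=E_x(A_\zeta-A_t\mid\FF_t)$ is, for q.e. starting point, a potential of a PCAF and such potentials of class (D) admit càdlàg versions, so left limits exist. One must also be slightly careful that the nest $\{F_n\}$ is only a \emph{generalized} nest, so $A^n$ may fail to be finite at $\zeta$ on a small set; but $E_xA^n_\zeta\le E_xA_\zeta<\infty$ q.e., so this causes no difficulty. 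With these observations assembled, the conclusion follows.
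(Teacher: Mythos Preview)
Your approximation scheme and overall strategy are close to the paper's, but the passage to the limit contains a genuine gap. You assert that ``right-continuity of an increasing limit of right-continuous functions is automatic''; this is false (take $f_n(t)=(nt)\wedge 1$ on $[0,\infty)$, whose increasing limit is $\mathbf{1}_{(0,\infty)}$). You then try to repair this by noting that $u(X_t)=E_x(A_\zeta-A_t\mid\FF_t)$ is a supermartingale of class (D) and therefore \emph{admits} a c\`adl\`ag version. But membership in $\mathcal{C}$ requires that the process $t\mapsto u(X_t)$ itself be right-continuous with left limits along $X_{t-}$, not merely that some indistinguishable modification be c\`adl\`ag. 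The supermartingale regularization produces a process $Y$ with $Y_t=u(X_t)$ a.s.\ for each fixed $t$; it does not, without further input, identify $Y$ with $u\circ X$ pathwise. (If this step worked as stated, the approximating sequence $u_n$ would be unnecessary: your argument applies verbatim to $u$ directly.)

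The paper closes this gap by proving \emph{uniform} convergence of the approximants along paths: using the Markov property and the maximal inequality of \cite[Lemma~6.1]{BDHPS}, one obtains
\[
E_x\sup_{t\ge 0}|u_n(X_t)-u_m(X_t)|^q\le \frac{1}{1-q}\,E_x\Big(\int_0^\infty \mathbf{1}_{F_n\Delta F_m}(X_t)\,dA_t\Big)^q,
\]
and the right-hand side tends to zero. Uniform convergence in $t$ (in probability) does transfer the c\`adl\`ag property from $u_n(X_\cdot)$ to $u(X_\cdot)$, which is exactly what is needed for $u\in\mathcal{C}$. Note also that the paper's approximants carry an $e^{-t/n}$ factor, so that quasi-continuity of $u_n$ follows from the $\alpha$-potential theory in \cite[Theorems~5.1.1, 5.1.6]{Fukushima}; your $0$-order potentials $E_xA^n_\zeta$ are not covered by those statements as written. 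An alternative route that would salvage your argument is to invoke directly that $u$ is excessive and use the fine-continuity/right-continuity of excessive functions along Hunt paths, but that is a different citation and still leaves the left-limit condition in $\mathcal{C}$ to be checked.
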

\begin{dow}
By Lemma \ref{lm2.1}, $u(x)<\infty$ q.e. Hence, by \cite[Theorem
4.1.1]{Fukushima}, without loss of generality we may assume that
$B=\{x\in E; u(x)=\infty\}$ is properly exceptional. Since $A$ is
PCAF of $\BX$, by \cite[Theorem 5.1.4]{Fukushima} there exists a
unique measure $\mu\in S$ such that $A=A^{\mu}$. Let $\{F_{n}\}$
be a generalized nest such that $\mathbf{1}_{F_{n}}\mu\in S_{00}$
(see \cite[Theorem 2.2.4]{Fukushima}). Then for each $n\in\BN$ the
function $u_n$ defined by
\[
u_{n}(x)\equiv E_{x}\int_{0}^{\infty}
e^{-t/n}\,\mathbf{1}_{F_{n}}(X_{t})\,dA_{t}
=E_{x}\int_{0}^{\infty}
e^{-t/n}\,dA_t^{\mathbf{1}_{F_{n}}\mu}<\infty,\quad x\in E
\]
is quasi-continuous (see Theorems 5.1.1 and 5.1.6 in
\cite{Fukushima}). Let us observe that
\begin{equation}
\label{eq2.1}
u_{n}(x)\nearrow u(x),\quad x\in B^{c}.
\end{equation}
Indeed, since $E$ is locally compact separable metric space, to
prove (\ref{eq2.1}) it suffices to show that for every compact set
$K\subset E$,
\begin{equation}
\label{eq2.2}
\lim_{n\rightarrow\infty}E_{x}\int_{0}^{\infty}\mathbf{1}_{K\setminus
F_{n}}(X_{t})\,dA_{t}=0.
\end{equation}
By \cite[Theorem 4.2.1]{Fukushima}, $p^{1}_{K\setminus
F_{n}}(x)\rightarrow 0$, q.e., where $p^{1}_{K\setminus F_{n}}(x)
=E_{x}e^{-\sigma_{K\setminus F_{n}}},\, x\in E$. In view of
\cite[Theorem 4.1.1]{Fukushima}, without loss of generality we may
assume that $p^{1}_{K\setminus F_{n}}(x)\rightarrow 0$ for every
$x\in B^{c}$. The last convergence implies that for every $x\in
B^{c}$,
\begin{equation}
\label{eq2.3} \mathbf{1}_{K\setminus F_{n}}(X_{t})\rightarrow
0,\quad t\ge 0,\quad P_{x}\mbox{-a.s.}.
\end{equation}
Using this, the definition of $B$ and the Lebesgue dominated
convergence theorem we get (\ref{eq2.2}), and hence (\ref{eq2.1}).
From (\ref{eq2.3}) it also follows that for every $x\in B^{c}$,
\begin{equation}
\label{eq2.4}
\lim_{n,m\rightarrow\infty}E_{x}\int_{0}^{\infty}\mathbf{1}_{F_{n}\Delta
F_{m}}(X_{t})\,dA_{t}=0.
\end{equation}
By the Markov property and \cite[Lemma 6.1]{BDHPS},
\[
E_{x}\sup_{t\ge 0}|u_{n}(X_{t})-u_{m}(X_{t})|^{q} \le
\frac{1}{1-q}E_{x}(\int_{0}^{\infty} \mathbf{1}_{F_{n}\Delta
F_{m}}(X_{t})\,dA_{t})^{q},\quad q\in (0,1).
\]
Combining this with (\ref{eq2.1}), the fact that $B$ is properly
exceptional set and (\ref{eq2.4}) shows that for every $x\in B$,
\begin{equation}
\label{eq4.6} \lim_{n\rightarrow\infty}E_{x}\sup_{t\ge
0}|u_{n}(X_{t})-u(X_{t})|^{q}=0.
\end{equation}
Since $u_{n}$ is quasi-continuous, $u_{n}\in\mathcal{C}$. From
this and (\ref{eq4.6}) it may be concluded that $u\in\mathcal{C}$,
i.e. $u$ is quasi-continuous.
\end{dow}
\medskip

Let $A$ denote the unique nonpositive self-adjoint operator on
$L^{2}(E;m)$ such that
\[
D(A)\subset D[\EE], \quad \EE(u,v)=(-Au,v),\,u\in D(A),v\in D[\EE]
\]
(see \cite[Corollary 1.3.1]{Fukushima}) and let
$\BX=(\Omega,\FF,\FF_t,X,P_x)$ be a Hunt process with life-time
$\zeta$ associated with $(\EE,D[\EE])$.


\begin{df}
Let $\mu$ be a smooth measure such that
$E_x|A^{\mu}|_{\zeta}<\infty$ for $m$-a.e. $x\in E$, where
$A^{\mu}$ is the CAF of $\BX$ associated with $\mu$. We say that a
quasi-continuous function $u:E\rightarrow\BR$ is a probabilistic
solution of the equation
\begin{equation}
\label{eq2.5} -Au=f_u+\mu,
\end{equation}
where $f_u=f(\cdot,u)$, if
$E_{x}\int_{0}^{\zeta}|f_u(X_{t})|\,dt<\infty$ and
\begin{equation}
\label{eq2.7} u(x)=E_{x}\int_{0}^{\zeta}f_u(X_{t})\,dt
+E_{x}\int_{0}^{\zeta}dA^{\mu}_{t}
\end{equation}
for q.e. $x\in E$.
\end{df}

In Section \ref{sec6}  we provide a simple example of a Dirichlet
form and $\mu\in S$ such that $\mu$ is not Radon and
$E_x|A^{\mu}|_{\zeta}<\infty$ for $m$-a.e. $x\in E$.

We now introduce an important notion of quasi-$L^{1}$ functions on
$E$ (we recall that it appears in condition (A3$'$)).

\begin{df}
We say that a Borel function $f$ on $E$ is quasi-$L^{1}$ with
respect to the regular Dirichlet form $(\EE,D(\EE))$ on
$L^{2}(E;m)$  if the function $t\mapsto f(X_{t})$ belongs to
$L^{1}_{loc}(\BR_{+})$ $P_{x}$-a.s. for q.e. $x\in E$.
\end{df}

\begin{uw}
\label{uwww} If $f\in L^{1}(E;m)$ then $f$ is quasi-$L^{1}$.
Indeed, if $f\in L^{1}(E;m)$ then by \cite[Theorem
5.1.3]{Fukushima},
\[
E_{m}\into|f(X_{t})|\,dt=\into\langle|f|,p_{t}1\rangle\,dt\le
T\|f\|_{L^{1}(E;m)},
\]
where $\{p_{t},\,t\ge 0\}$ is the semigroup associated with the
operator $A$ corresponding to $\EE$. Therefore $E_{x}\into
|f(X_{t})|\,dt<\infty$ for $m$-a.e. $x\in E$, and hence, by Lemma
\ref{lm2.2}, for q.e. $x\in E$.
\end{uw}

\begin{uw}
\label{uw4.4} A different notion of quasi-integrability was
introduced in the paper \cite{OrsinaPonce} devoted to  semilinear
elliptic systems with measure data. In \cite{OrsinaPonce}, where
the Laplace operator $\Delta$ on a smooth bounded domain
$D\subset\BRD$ subject to the Dirichlet boundary conditions is
considered, a measurable function $f:D\rightarrow\BR$ is called
quasi-$L^1$ if for every $\varepsilon>0$ and compact set $K\subset
D$ there exists an open set $U\subset D$ such that
$\mbox{cap}(U)<\varepsilon$ and $f\in L^{1}(K\setminus U;dx)$. By
\cite[Proposition 2.3]{OrsinaPonce}, $f$ is quasi-$L^{1}$ on $D$
iff there exists a quasi-finite function $G$ on $D$ and $H\in
L^{1}(D;dx)$ such that $|f|\le G+H$, $m$-a.e., where $m$ is the
Lebesgue measure on $D$ and $\mbox{cap}$ is the Newtonian
capacity, i.e. the capacity associated with the form generating
$\Delta$ (see Section \ref{sec6}). Here ``quasi-finite'' means
that for every $\varepsilon>0$ and every compact set $K\subset D$
there exists $M>0$ and an open set $U\subset D$ such that
$\mbox{cap}(U)<\varepsilon$ and $|G|\le M$, $m$-a.e. on
$K\setminus U$.

Let us observe that if $f$ is quasi-$L^1$ in the sense of
\cite{OrsinaPonce} than for every compact subset $K\subset D$, the
function $f{|_K}$ is  quasi-$L^1$ in the sense defined in our
paper. To see this, let us first note that by Remark \ref{uwww},
$H$ is  quasi-$L^{1}$ in the sense of our definition. Since $G$ is
quasi-finite, there exists a decreasing sequence $\{U_{n}\}$ of
open subsets of $D$ and a sequence $\{M_{n}\}$ of positive
constants such that $\mbox{cap}(U_{n})\searrow0$ and
$G_{|K\setminus U_{n}}\le M_{n}$, $m$-a.e. In particular, $G\in
L^{1}(K\setminus U_{n};dx)$ for $n\in\mathbb{N}$. From this and
\cite[Theorem 4.2.1]{Fukushima} it follows that for q.e. $x\in E$,
\begin{align*}
&P_{x}(\int_{0}^{T}G_{|K}(X_{t})\,dt=\infty)\\
&\qquad \le P_{x}(\into G_{|K\setminus U_{n}}(X_{t})\,dt=\infty)
+P_{x}(\into G_{|U_{n}}(X_{t})\,dt=\infty)\\
&\qquad=P_{x}(\into G_{|U_{n}}(X_{t})\,dt=\infty)\\
&\qquad \le P_{x}(\exists_{t\in [0,T]} X_{t}\in
U_{n})=P_{x}(\sigma_{U_{n}}\le T)\le
e^{T}E_{x}e^{-\sigma_{U_{n}}},
\end{align*}
which converges to zero as $n\rightarrow\infty$. Thus, $G_{|K}$ is
quasi-$L^{1}$, which completes the proof that $f_{|K}$ is
quasi-$L^{1}$.

The class of quasi-$L^1$ functions defined in \cite{OrsinaPonce}
is well adjusted to the  Dirichlet problem with zero boundary
conditions. It is, however, too large to get existence results in
our general setting (for instance if the Dirichlet form leads to
equations with the Laplace operator subject to Neumann boundary
conditions). To overcome this difficulty one can define
analytically a bit narrower class of functions, say the class
$qL^{1}$, consisting of all measurable $f:D\rightarrow\BR$ such
that for every $\varepsilon>0$ there exists an open set $U\subset
D$ such that $\mbox{cap}(U)<\varepsilon$ and $f\in
L^{1}(D\setminus U;dx)$. Then in the same manner as above (with
$K=D$) one can show that if $f\in qL^{1}$ then $f$ is
quasi-$L^{1}$ in the sense of our definition. The class $qL^{1}$
is in general narrower then the class of quasi-$L^{1}$ defined in
the present paper. To see this it suffices to consider the
Dirichlet form (\ref{eq4.0}) with $D=\BRD$ and coefficients
$a_{ij}$ satisfying (\ref{eq4.1}) and condition (b). Then every
continuous function $f$ on $\BRD$ is quasi-$L^{1}$ but $f\equiv 1$
does not belong to $qL^{1}$.
\end{uw}

\begin{uw}
The space of quasi-$L^{1}$ functions is quite wide. It contains
many singular functions. In case $A$ is as in Remark \ref{uw4.4},
a typical example of such function is
$f:B^{d}(0,1)\rightarrow\BR$, $d\ge 2$, defined by
$f(x)=|x|^{-\alpha}$ for some $\alpha>0$.
\end{uw}

In order to state succinctly our main theorem on existence and
uniqueness of solutions of (\ref{eq2.5}), we introduce the
following terminology. We say that a function $u:E\rightarrow\BR$
is of class (FD) if the process $t\mapsto u(X_t)$ is of class (D)
under the measure $P_x$ for q.e. $x\in E$. Similarly, we say that
$u\in\FD^p$ if the process $t\mapsto u(X_t)$ belongs to $\DM^p$
under $P_x$ for q.e. $x\in E$.

Let us recall that $f^0(t,y)(\omega)=f(X_t(\omega),y)$,
$t\ge0,y\in\BR$.
\begin{tw}
\label{th2.1} Assume \mbox{\rm(A1), (A2), (A3$'$), (A4$'$)}. Then
there exists a unique solution $u$ of \mbox{\rm(\ref{eq2.5})} such
that $u$ is of class \mbox{\rm(FD)}. Actually, $u\in\FD^{q}$ for
$q\in (0,1)$.  Moreover, for q.e. $x\in E$ there exists a unique
solution $(Y^{x},M^{x})$ of BSDE$(\zeta,f^{0}+dA^{\mu})$ on
$(\Omega,\FF,\FF_t,P_{x})$. In fact,
\[
u(X_{t})=Y^{x}_{t},\quad t\ge 0, \quad P_{x}\mbox{\rm-a.s.}.
\]
\end{tw}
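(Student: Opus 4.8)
The plan is to reduce Theorem \ref{th2.1} to the results of Sections \ref{sec2}--\ref{sec3} by transferring the abstract BSDE theory to the probabilistic setting of the Hunt process $\BX$. First I would check that the data $(\zeta,f^{0}+dA^{\mu})$ satisfy the hypotheses of Theorem \ref{th1.2} under $P_{x}$ for q.e. $x\in E$. Assumption (A1) gives (H1), assumption (A2) gives (H2), and (A3$'$) gives (H3) (after subtracting $f(\cdot,0)$, using that $t\mapsto F_{r}(X_{t})$ is locally integrable $P_x$-a.s.\ for q.e. $x$). The integrability condition $E_{x}\int_{0}^{\zeta}|f(X_{t},0)|\,dt+E_{x}\int_{0}^{\zeta}d|A^{\mu}|_{t}<\infty$ is assumed in (A4$'$) for $m$-a.e. $x$; here I would invoke Lemma \ref{lm2.1} to upgrade ``$m$-a.e.'' to ``q.e.''. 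Thus for q.e. $x\in E$ there is, on $(\Omega,\FF,\FF_t,P_x)$, a unique solution $(Y^{x},M^{x})$ of BSDE$(\zeta,f^{0}+dA^{\mu})$ with $(Y^x,M^x)\in\DM^{q}\otimes\MM^{q}$, $M^x$ uniformly integrable, $Y^x$ of class (D). That settles the last assertion of the theorem modulo identifying $u(X_t)$ with $Y^x_t$.

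Next I would define $u$ by the candidate formula. Since $Y^{x}_{0}$ is deterministic ($\FF_0$ is trivial up to $P_x$-null sets), set $u(x)=E_{x}Y^{x}_{0}$ for q.e. $x$, and note from (\ref{eq3.1}) with $t=0$, $T\to\infty$, together with $Y^x_{T\wedge\zeta}\to0$ and the class (D) property of $Y^x$ and uniform integrability of $M^x$, that
\[
u(x)=E_{x}\int_{0}^{\zeta}f(X_{s},Y^{x}_{s})\,ds+E_{x}\int_{0}^{\zeta}dA^{\mu}_{s}.
\]
The heart of the proof is then to show that $u$ is quasi-continuous and that $u(X_{t})=Y^{x}_{t}$ for $t\ge0$, $P_x$-a.s.\ for q.e. $x$. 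For this I would use the Markov property exactly as in the construction sketched in the introduction: by Proposition \ref{prop2.1} (with $V=A^{\mu}$, which is an additive functional, so $dV^{r+h}_s\circ\theta_h=dV^r_s$) the shifted solutions satisfy $(Y^{r+h}_{t+h},M^{r+h}_{t+h})=(Y^{r}_{t},M^{r}_{t})$, and taking $r=0$ this yields $Y^{x}_{t}\circ\theta$-consistency, i.e. the process $t\mapsto Y^{x}_{t}$ is, up to the shift, the solution started from $X_t$. Taking $P_{x}(\cdot\,|\,\FF_t)$-expectations and using $E_{X_t}Y^{X_t}_0 = Y^x_t$ (the Markov property applied to the representation of $Y^x_t$ as a conditional expectation given in the proof of Theorem \ref{th1.2}) gives $u(X_t)=Y^x_t$, $P_x$-a.s., once we know $u$ is Borel (nearly Borel) — which follows from Remark \ref{uw2.1}, providing an $(\FF_t)$-adapted version $(Y,M)$ with $Y_t=Y^x_t$ $P_x$-a.s.\ for all $x$ in a Borel set of full capacity.

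Quasi-continuity of $u$ is the step I expect to be the main obstacle. The natural route is to approximate: write $A^{\mu}$ and $f$ through the truncations used in the proof of Theorem \ref{th1.2} (truncate $|A^{\mu}|$ on a generalized nest $\{F_n\}$ with $\mathbf 1_{F_n}\mu\in S_{00}$, and truncate $f(\cdot,0)$), obtaining solutions $u_n$ of equations with bounded data. For the truncated measures the corresponding potential-type functions are quasi-continuous by Lemma \ref{lm2.2} and the classical potential theory of \cite{Fukushima} (Theorems 5.1.1, 5.1.6), and for the bounded monotone nonlinearity one gets quasi-continuity of $u_n$ from the regularity of resolvents of $\BX$. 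Then the a priori estimates of Theorem \ref{th1.2} — in particular the bound $E_x\sup_{t\ge0}|u_n(X_t)-u(X_t)|^q\le\frac{1}{1-q}E_x(\Psi^n)^q$ with $\Psi^n\to0$ — show, via \cite[Lemma 6.1]{BDHPS} and Lemma \ref{lm2.2}-type arguments, that $u_n(X_t)\to u(X_t)$ uniformly in $t$, $P_x$-a.s.\ for q.e. $x$; hence $u\in\mathcal C$, i.e. $u$ is quasi-continuous. Finally, uniqueness: if $v$ is another solution of class (FD), then $t\mapsto v(X_t)$ is, by the nonlinear Feynman--Kac formula (\ref{eq2.7}) and the Markov property, a solution of BSDE$(\zeta,f^0+dA^\mu)$ of class (D) under $P_x$ for q.e. $x$; Corollary \ref{wn3.2} forces $v(X_t)=u(X_t)=Y^x_t$, and since both are quasi-continuous this gives $v=u$ q.e.
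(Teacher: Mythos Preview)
Your overall strategy coincides with the paper's: verify that (A1)--(A3$'$) give (H1)--(H3) for $f^{0}$ under $P_x$, upgrade (A4$'$) from $m$-a.e.\ to q.e.\ via Lemma \ref{lm2.1}, apply Theorem \ref{th1.2} to obtain $(Y^x,M^x)$ for $x$ outside a properly exceptional set $N$, glue these into a single $(\FF_t)$-adapted pair $(Y,M)$ by Remark \ref{uw2.1}, set $u(x)=E_xY_0$, and then run the Markov chain
\[
u(X_t)=E_{X_t}Y_0=E_{X_t}Y^t_t=E_x(Y^t_t\circ\theta_t\mid\FF_t)=E_x(Y^0_t\mid\FF_t)=Y^0_t
\]
using Proposition \ref{prop2.1}. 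This is exactly what the paper does, and your uniqueness argument via Corollary \ref{wn3.2} is also the right one.

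Where you diverge is the quasi-continuity step, and here your proposal has a gap. The paper does \emph{not} approximate: having obtained $u(X_t)=Y_t$ for each $t$ (hence, by Fubini, for a.e.\ $t$ $P_x$-a.s.), it concludes $u\in\mathcal C$ directly. Your proposed route through truncated approximants $u_n$ requires you to first prove $u_n$ quasi-continuous, and the tools you cite do not do this: Lemma \ref{lm2.2} handles only \emph{linear} potentials $x\mapsto E_xA_\zeta$, whereas each $u_n$ solves a genuinely nonlinear equation involving $f(\cdot,u_n)$, and the phrase ``regularity of resolvents of $\BX$'' is not an argument. Any honest proof that $u_n\in\mathcal C$ would have to go through the same Markov identity $u_n(X_t)=Y^n_t$ (so that $u_n$ becomes the potential of $f_{u_n}\cdot m+\mu_n$, to which Lemma \ref{lm2.2} can be applied) --- but that argument works verbatim for $u$ itself, so the approximation layer is redundant. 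The $\DM^q$-stability estimates you quote are the content of the proof of Theorem \ref{th1.2} at the BSDE level; they need not be replayed at the PDE level.
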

\begin{dow}
By Lemma \ref{lm2.1}, condition (A4$'$) is satisfied q.e. Let us
denote by $N$ the set of those $x\in E$ for which (A4$'$) is not
satisfied. In view of \cite[Theorem 4.1.1]{Fukushima} we may
assume that $N$ is properly exceptional. By Theorem \ref{th1.2},
for $x\in N^{c}$ there exists a unique solution $(Y^{x},M^{x})$ of
BSDE$(\zeta, f^{0}+dA^{\mu})$ on $(\Omega,\FF,\FF_t,P_{x})$ such
that $Y^{x}\in\DM^{q}, q\in (0,1)$, $Y^{x}$ is of class (D) and
$M^{x}\in\MM^{q}$, $q\in (0,1)$. By Remark \ref{uw2.1} there
exists a pair $(Y,M)$ of $(\FF_t)$ adapted c\`adl\`ag processes
which is a version of $(Y^{x},M^{x})$ under $P_{x}$ for every
$x\in N^{c}$. Let us put
\[
u(x)=E_{x}Y_{0},\quad x\in N^{c}, \qquad u(x)=0,\quad x\in N.
\]
By the Markov property, Proposition \ref{prop2.1} and the fact
that $N$ is properly exceptional, for every $x\in N^{c}$ we have
\[
u(X_{t})=E_{X_{t}}Y_{0}=E_{X_{t}}Y^{t}_{t}
=E_{x}(Y^{t}_{t}\circ\theta_{t}|\FF_{t})
=E_{x}(Y^{0}_{t}|\FF_{t})=Y^{0}_{t},\quad P_{x}\mbox{-a.s.}.
\]
Since $u\in\mathcal{C}$, $u(X_{t})=Y^{x}_{t}$, $t\ge 0$,
$P_{x}$-a.s. for  q.e. $x\in E$, and the proof is complete.
\end{dow}
\medskip

Let us note that (A3$'$), (A4$'$) are minimal assumptions under
which there exists an $m$-a.e. finite solution of (\ref{eq2.5}).
In the next section we formulate some purely analytic conditions
on $f,\mu$ which for transient Dirichlet forms imply (A3$'$),
(A4$'$).

\begin{uw}
(i) A remarkable feature of Theorem \ref{th2.1} is that it can be
used in situations in which the underlying Dirichlet form is not
transient. For instance, it applies to Dirichlet problem with
Laplace operator in dimensions 1 and 2.\\
(ii) Suppose that $f$ does not depend on $y$ and $\mu\equiv 0$.
One of the equivalent conditions ensuring transiency of
$(\EE,D[\EE])$ is that for every nonnegative $f\in L^{1}(E;m)$
condition (A4$'$) is satisfied. This shows that if $d=1$ or $d=2$
then one can find $f\in L^{1}(E;m)$ such that there is no solution
of (\ref{eq2.5}).
\end{uw}

\begin{stw}
Let $u_{1}, u_{2}$ be solutions of \mbox{\rm(\ref{eq2.5})} with
the data $(f^{1},\mu_{1})$, $(f^{2},\mu_{2})$, respectively, such
that $u_{1}, u_{2}$ are of class \mbox{\rm(FD)}. Assume that
$\mu_{1}\le\mu_{2}$ and either $f^{1}(x,u_{1}(x))\le
f^{2}(x,u_{1}(x))$ $m$-a.e. and $f^{2}$ satisfies \mbox{\rm(H2)}
or $f^{1}(x,u_{2}(x))\le f^{2}(x,u_{2}(x))$ $m$-a.e. and $f^{1}$
satisfies \mbox{\rm(H2)}. Then $u_{1}(x)\le u_{2}(x)$ for q.e.
$x\in E$.
\end{stw}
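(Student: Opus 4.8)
The plan is to reduce the statement to the probabilistic comparison result of Proposition~\ref{th1.1} by passing from the PDE level to the level of the associated BSDEs with random terminal time~$\zeta$. By Theorem~\ref{th2.1}, each $u_i$ is realized through a solution $(Y^{i,x},M^{i,x})$ of BSDE$(\zeta,f^{i,0}+dA^{\mu_i})$ on $(\Omega,\FF,\FF_t,P_x)$, in the sense that $u_i(X_t)=Y^{i,x}_t$, $t\ge0$, $P_x$-a.s.\ for q.e.\ $x\in E$; moreover $Y^{i,x}$ is of class (D) because $u_i$ is of class (FD). Here $f^{i,0}(t,y)(\omega)=f^i(X_t(\omega),y)$ and $A^{\mu_i}$ is the CAF in Revuz correspondence with $\mu_i$. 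Fix $x$ outside the exceptional sets attached to $u_1$, $u_2$ and both representations.

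First I would verify that the hypotheses of Proposition~\ref{th1.1} hold $P_x$-a.s.\ for this $x$. Since $\mu_1\le\mu_2$ are smooth measures, the corresponding PCAFs satisfy $dA^{\mu_1}_t\le dA^{\mu_2}_t$ (monotonicity of the Revuz correspondence on PCAFs), which gives $dV^1\le dV^2$ in the notation of Section~\ref{sec3} with $V^i=A^{\mu_i}$. Next, in the case where $f^2$ satisfies (H2) and $f^1(x,u_1(x))\le f^2(x,u_1(x))$ $m$-a.e., I would argue that $f^{2,0}$ satisfies (H2) pathwise (it is inherited directly from (A2) for $f^2$), and that $f^{1,0}(t,Y^{1,x}_t)\le f^{2,0}(t,Y^{1,x}_t)$ for a.e.\ $t$, $P_x$-a.s. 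The latter is the inequality $f^1(X_t,u_1(X_t))\le f^2(X_t,u_1(X_t))$, which we would need to deduce from the $m$-a.e.\ inequality $f^1(\cdot,u_1)\le f^2(\cdot,u_1)$; this is where one invokes that $m$-negligible sets are hit only on a Lebesgue-null set of times $P_x$-a.s.\ for q.e.\ $x$ (a consequence of the fact that a properly exceptional enlargement of an $m$-null set is exceptional, together with Fubini applied to $E_m\int_0^T \mathbf 1_N(X_t)\,dt=0$ via \cite[Theorem 5.1.3]{Fukushima}). The symmetric case (with $f^1$ satisfying (H2) and the inequality holding at $u_2$) is handled identically after swapping roles, using $Y^{2,x}$ as the common argument.

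With these verifications in place, Proposition~\ref{th1.1} applies to the pair $(Y^{1,x},M^{1,x})$, $(Y^{2,x},M^{2,x})$ and yields $Y^{1,x}_t\le Y^{2,x}_t$ for all $t\ge0$, $P_x$-a.s. Evaluating at $t=0$ and taking $E_x$ gives $u_1(x)=E_xY^{1,x}_0\le E_xY^{2,x}_0=u_2(x)$. Since this holds for q.e.\ $x\in E$, the proof is complete.

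The main obstacle I expect is the passage from the $m$-a.e.\ inequality on $f^1,f^2$ to the a.e.-$t$, $P_x$-a.s.\ inequality along the trajectories, and the parallel point that $dA^{\mu_1}\le dA^{\mu_2}$ follows from $\mu_1\le\mu_2$; both are ``soft'' but require careful bookkeeping with exceptional sets so that a single $x$ works for all the representations simultaneously. Everything else is a direct citation of Proposition~\ref{th1.1} and Theorem~\ref{th2.1}.
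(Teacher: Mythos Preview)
Your proposal is correct and follows essentially the same route as the paper: invoke Theorem~\ref{th2.1} to identify $u_i(X)$ with the first component of a solution of BSDE$(\zeta,f^{i,0}+dA^{\mu_i})$, transfer the $m$-a.e.\ inequalities to pathwise ones, and apply Proposition~\ref{th1.1}. The paper packages the ``$m$-a.e.\ to a.e.-$t$ along trajectories'' step more cleanly as \emph{uniqueness of the Revuz correspondence} (the PCAF of the zero measure $(f^1_{u_1}-f^2_{u_1})^+\cdot m$ vanishes q.e.), which is preferable to your parenthetical about exceptional enlargements of $m$-null sets; note also that at $t=0$ you get $u_i(x)=Y^{i,x}_0$ directly without taking $E_x$.
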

\begin{dow}
By Theorem \ref{th2.1}, $u_{1}(X), u_{2}(X)$ are first components
of the solutions of BSDE$(\zeta,f^{1,0}+dA^{\mu_{1}})$ and
BSDE$(\zeta,f^{2,0}+dA^{\mu_{2}})$, respectively. Since
$(f^{1}_{u_{1}}-f^{2}_{u_{2}})^{+}=0$, $m$-a.e. and
$(\mu_1-\mu_2)^+=0$, then by uniqueness of the Revuz duality, for
q.e. $x\in E$,
$\int_{0}^{t}(f^{1}_{u_{1}}-f^{2}_{u_{2}})^{+}(X_s)\,ds=0$ and
$dA^{\mu_1}_t\le dA^{\mu_2}_t$, $t\ge 0,\, P_{x}$-a.s. It follows
that for q.e. $x\in E$ the solutions of the backward equations
satisfy on the space $(\Omega,\FF,\FF_t,P_x)$ the assumptions of
Proposition \ref{th1.1}. Therefore $u_{1}(X_{t})\le u_{2}(X_{t})$,
$t\ge 0$, $P_{x}$-a.s. for q.e. $x\in E$, and consequently,
$u_{1}(x)\le u_{2}(x)$ for q.e. $x\in E$.
\end{dow}

\nsubsection{Regularity of probabilistic solutions}
\label{sec5}

In this section we investigate regularity properties of
probabilistic solutions of (\ref{eq2.5}) under the additional
assumption that $(\EE,D[\EE])$ is transient and $\mu$ is a bounded
smooth measure. We also show that under these assumptions the
probabilistic solution of (\ref{eq2.5}) can be defined purely
analytically by duality.

We begin with definitions of some subsets of the set $S$ of smooth
measures. For more details we refer the reader to
\cite{Fukushima}.

$\MM_{0,b}$ denotes the class of all smooth measures on $E$ such
that $|\mu|(E)<\infty$, where $|\mu|$ stands for the total
variation of $\mu$ (elements of $\MM_{0,b}$ are sometimes called
soft measures; see \cite{DPP}). $\MM^+_{0,b}$ denotes the subset
of $\MM_{0,b}$ consisting of all nonnegative measures.

Let us recall that a Markovian semigroup $\{p_{t},t\ge 0\}$ is
called transient if for every nonnegative $f\in L^{1}(E;m)$,
\[
Gf(x)\equiv\lim_{N\rightarrow+\infty}\int_{0}^{N}p_{t}f(x)\,dt,
\quad m\mbox{-a.e.}
\]
(The limit above is well defined since the sequence on the
right-hand side is monotone). We say that a Dirichlet form $(\EE,
D[\EE])$ is transient if its associated semigroup is transient.

Assume that $(\EE,D[\EE])$ is a transient regular form. Then $\EE$
can be extended to a function space $\FF_{e}\subset\BB(E;m)$ in
such a way that $(\EE,\FF_{e})$ is a Hilbert space (see
\cite[Section 1.5]{Fukushima}). It is known that $\FF_{e}\cap
L^{2}(E;m)=D[\EE]$ and there exists a strictly positive bounded
function $g\in L^{1}(E;m)$ such that $\FF_{e}\subset
L^{1}(E;g\cdot dm)$ and
\begin{equation}
\label{eq5.4} (|u|,g)_{L^{2}(E;m)}\le \sqrt{\EE(u,u)},\quad
u\in\FF_{e}.
\end{equation}
In fact this is an equivalent condition for transiency of the form
$(\EE, D[\EE])$ (see \cite[Theorem 1.5.1]{Fukushima}). It is also
known (see \cite[Theorem 2.1.7]{Fukushima}) that any $u\in\FF_e$
admits a quasi-continuous modification that will always be
identified  with $u$.

By $S^{(0)}_{0}$ we denote the set of all nonnegative smooth
measures such that
\begin{equation}
\label{eq5.1} \int_{E}|v(x)|\,\mu(dx)\le c\sqrt{\EE(v,v)},\quad
v\in\FF_{e}\cap C_{0}(E)
\end{equation}
for some $c>0$. By Riesz's theorem, for every $\mu\in S^{(0)}_{0}$
there exists a unique function $U\mu\in\FF_{e}$, called the
($0$-order) potential of the measure $\mu$, such that
\[
\EE(U\mu,v)=\int_{E}v(x)\,\mu(dx),\quad v\in\FF_{e}\cap C_{0}(E).
\]
In fact, under our convention that elements of $\FF_e$ are
identified with their quasi-continuous modifications, the above
equality holds true for any $v\in\FF_e$ (see \cite[Theorem
2.2.5]{Fukushima}).

By $S_{0}$ we denote the class of nonnegative smooth measures of
finite energy integral, i.e. measures such that
\[
\int_{E}|v(x)|\,\mu(dx)\le c\sqrt{\EE_{1}(v,v)},
\quad v\in\FF\cap C_{0}(E).
\]
Again by Riesz's theorem, for every $\mu\in S_{0}$ and $\alpha>0$
there exists a unique function $U_{\alpha}\mu\in\FF$, called
$\alpha$-potential of $\mu$, such that
\[
\EE_{\alpha}(U_{\alpha}\mu,v)=\int_{E}v(x)\,\mu(dx),
\quad v\in\FF\cap C_{0}(E).
\]
Of course $S^{(0)}_{0}\subset S_{0}$.

By $S^{(0)}_{00}$ we denote the subset of $S^{(0)}_{0}$ consisting
of all measures $\mu$ such that $U\mu$ is bounded q.e. Note that
for every $\mu\in S$ there exists a generalized nest $\{F_n\}$
such that $\mathbf{1}_{F_{n}}\cdot|\mu|\in S^{(0)}_{00}$ (see
\cite[Theorem 2.2.4]{Fukushima} and remarks following the proof of
\cite[Corollary 2.2.2]{Fukushima}).

\begin{lm}
\label{lm5.1} Let $\mu\in S$, $\nu\in S^{(0)}_{00}$. Then for any
nonnegative Borel function $f$,
\[
E_{\nu}\int_{0}^{\zeta}f(X_{t})\,dA^{\mu}_{t} =\langle
f\cdot\mu,U\nu\rangle.
\]
\end{lm}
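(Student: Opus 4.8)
The plan is to establish the identity first for a dense/nice class of measures $\mu$ and then pass to the limit using a generalized nest, exploiting the Revuz correspondence together with the potential-theoretic characterization of $U\nu$.

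First I would recall the probabilistic meaning of the potential of $\nu\in S^{(0)}_{00}$: by \cite[Theorem 2.2.5 and Theorem 5.1.3]{Fukushima} one has the representation $U\nu(x)=E_x\int_0^\zeta dA^\nu_t$ q.e., where $A^\nu$ is the PCAF in Revuz correspondence with $\nu$; the hypothesis $\nu\in S^{(0)}_{00}$ guarantees that $U\nu$ is bounded q.e., which will be what makes all the integrals below finite. Next, for $\mu\in S$ and a nonnegative Borel $f$, I would write $f\cdot\mu\in S$ as well (its total variation charges no set of zero capacity and the same generalized nest works since $|f\cdot\mu|\le$ a multiple of $|\mu|$ on the relevant sets after truncating $f$), and note that the PCAF associated with $f\cdot\mu$ is $dA^{f\cdot\mu}_t=f(X_t)\,dA^\mu_t$; this is a standard property of the Revuz correspondence (see \cite[Chapter 5]{Fukushima}). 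Thus the claimed formula is equivalent to
\[
E_\nu\int_0^\zeta dA^{f\cdot\mu}_t=\langle f\cdot\mu,U\nu\rangle,
\]
so it suffices to prove: for every $\lambda\in S$, $E_\nu\int_0^\zeta dA^\lambda_t=\langle\lambda,U\nu\rangle$.

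To prove this last identity I would first do it for $\lambda\in S^{(0)}_{00}$ (or at least $\lambda\in S_0$), where by symmetry of the Revuz/potential pairing \cite[Theorem 2.2.5]{Fukushima} one has $\langle\lambda,U\nu\rangle=\EE(U\lambda,U\nu)=\langle\nu,U\lambda\rangle$, and on the other hand $E_\nu\int_0^\zeta dA^\lambda_t=\int_E U\lambda(x)\,\nu(dx)=\langle\nu,U\lambda\rangle$ by the probabilistic formula for $U\lambda$ evaluated against $\nu$; these two computations match. The general case $\lambda\in S$ then follows by monotone approximation: choose a generalized nest $\{F_n\}$ with $\mathbf 1_{F_n}\cdot\lambda\in S^{(0)}_{00}$ (possible by \cite[Theorem 2.2.4]{Fukushima} and the remarks after \cite[Corollary 2.2.2]{Fukushima}), apply the identity to $\lambda_n=\mathbf 1_{F_n}\cdot\lambda$, and let $n\to\infty$; the left-hand sides increase to $E_\nu\int_0^\zeta dA^\lambda_t$ by monotone convergence for the PCAFs $A^{\lambda_n}\uparrow A^\lambda$ (Revuz correspondence respects this monotone limit, cf. the argument used in Lemma \ref{lm2.2}), and the right-hand sides $\langle\lambda_n,U\nu\rangle$ increase to $\langle\lambda,U\nu\rangle$ by monotone convergence of the measures. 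Finally I would pull the $f$ back in: apply the just-proved $\lambda$-identity with $\lambda=f\cdot\mu$ truncated along a nest where it lies in $S^{(0)}_{00}$, then monotone-approximate in both $f$ (by $f\wedge k$) and along the nest.

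The main obstacle I expect is bookkeeping the finiteness and the measurability/exceptional-set issues rather than anything deep: one must be careful that $U\nu$ is genuinely bounded q.e. (not just $m$-a.e.) so that $\langle f\cdot\mu,U\nu\rangle$ and the expectation are simultaneously finite or simultaneously $+\infty$, and that ``$E_\nu$'' is interpreted as $\int_E E_x(\cdot)\,\nu(dx)$ with the integrand defined $\nu$-a.e.\ — here $\nu\in S^{(0)}_{00}\subset S$ charges no exceptional set, so q.e.\ statements transfer to $\nu$-a.e.\ statements. Also, identifying $dA^{f\cdot\mu}=f(X)\,dA^\mu$ for merely Borel nonnegative $f$ requires the standard monotone-class argument starting from bounded $f$, which is routine but should be cited. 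Once these points are in place the two sides are computed via the same Revuz formula and the identity is immediate.
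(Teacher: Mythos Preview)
Your plan is correct and close in spirit to the paper's argument, but the route differs in one notable respect. The paper does not invoke the $0$-order identity $U\lambda(x)=E_xA^\lambda_\zeta$ together with the symmetry $\langle\lambda,U\nu\rangle=\EE(U\lambda,U\nu)=\langle\nu,U\lambda\rangle$; instead it works at the $\alpha$-level: it takes a generalized nest $\{F_n\}$ with $\mathbf{1}_{F_n}|f|\cdot|\mu|\in S^{(0)}_{00}$, uses \cite[Lemma 5.1.3]{Fukushima} to write $E_x\int_0^\zeta e^{-\alpha t}\mathbf{1}_{F_n}f(X_t)\,dA^\mu_t=U_\alpha(\mathbf{1}_{F_n}f\cdot\mu)(x)$ q.e., integrates against $\nu$, flips via the $\alpha$-symmetry $\langle U_\alpha\lambda,\nu\rangle=\langle\lambda,U_\alpha\nu\rangle$, and then lets $\alpha\downarrow0$ on both sides using \cite[Lemma 2.2.11]{Fukushima} and dominated convergence before passing $n\to\infty$. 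Your version hides the $\alpha\to0$ passage inside the cited $0$-order probabilistic formula for $U\lambda$ (which is ultimately established by the same limit), so the two arguments are equivalent in content; yours is a bit more conceptual via the $\EE$-symmetry, while the paper's is slightly more self-contained because it cites only the $\alpha$-potential identities that are stated verbatim in \cite{Fukushima}. One small point to tighten: you should not assert $f\cdot\mu\in S$ for arbitrary nonnegative Borel $f$ up front --- simply work from the start with $\mathbf{1}_{F_n}(f\wedge k)\cdot\mu\in S^{(0)}_{00}$ along a suitable nest and do a single monotone limit, exactly as the paper does; your final paragraph already anticipates this fix.
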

\begin{dow}
Let $\{F_{n}\}$ be a  generalized nest such that
$\mathbf{1}_{F_{n}}|f|\cdot|\mu|\in S^{(0)}_{00}$. By \cite[Lemma
5.1.3]{Fukushima}, for every $\alpha>0$,
\[
E_{x}\int_{0}^{\zeta}e^{-\alpha t}\mathbf{1}_{F_{n}}f(X_{t})\,
dA^{\mu}_{t}=U_{\alpha}(\mathbf{1}_{F_{n}} f\cdot\mu)(x)
\]
for q.e. $x\in E$. Hence
\[
E_{\nu}\int_{0}^{\zeta}e^{-\alpha t}\mathbf{1}_{F_{n}}
f(X_{t})\,dA^{\mu}_{t}=\langle
U_{\alpha}(\mathbf{1}_{F_{n}}f\cdot\mu),\nu\rangle =\langle
\mathbf{1}_{F_{n}}f\cdot\mu,U_{\alpha}\nu\rangle.
\]
Since $\mathbf{1}_{F_{n}}f\cdot\mu\in S^{(0)}_{0}$, applying
\cite[Lemma 2.2.11]{Fukushima} yields
\[
\langle \mathbf{1}_{F_{n}}f\cdot\mu,
U_{\alpha}\nu\rangle\rightarrow \langle
\mathbf{1}_{F_{n}}f\cdot\mu, U\nu\rangle.
\]
On the other hand, by the Lebesgue dominated convergence theorem,
\[
E_{\nu}\int_{0}^{\zeta}e^{-\alpha t}\mathbf{1}_{F_{n}} f(X_{t})\,
dA^{\mu}_{t}\rightarrow
E_{\nu}\int_{0}^{\zeta}\mathbf{1}_{F_{n}}f(X_{t})\,dA^{\mu}_{t}.
\]
Hence
\[
E_{\nu}\int_{0}^{\zeta}\mathbf{1}_{F_{n}}
f(X_{t})\,dA^{\mu}_{t}=\langle
\mathbf{1}_{F_{n}}f\cdot\mu,U\nu\rangle.
\]
Letting $n\rightarrow\infty$ in the above equality and using the
fact that $(\bigcup^{\infty}_{n=1}F_n)^c$ is exceptional we get
the desired result.
\end{dow}
\medskip

Let $\mathcal{A}$ denote the space of all quasi-continuous
functions $u:E\rightarrow\BR$ such that
$|\langle\nu,u\rangle|<\infty$ for every $\nu\in S^{(0)}_{00}$.
Let us stress that the space $\mathcal{A}$ depends on the form
$(\EE, D[\EE])$. Observe also  that $\FF_{e}\subset\mathcal{A}$.

The following definition may be viewed as an analogue of
Stampacchia's definition of a solution of linear elliptic equation
with measure data (see \cite{Stampacchia}).

\begin{df}
\label{def4.2} {\rm Assume that $(\EE,D[\EE])$ is transient and
$\mu\in\MM_{0,b}$. We say that $u:E\rightarrow\BR$ is a solution of
(\ref{eq2.5}) in the sense of duality if $u\in\mathcal{A}$,
$f_u\in L^1(E;m)$ and
\begin{equation}
\label{eq4.16}
\langle\nu,u\rangle=(f_u,U\nu)_{L^{2}(E;m)}+\langle\mu,U\nu\rangle,
\quad \nu\in S^{(0)}_{00}.
\end{equation} }
\end{df}

\begin{uw}
\label{uw5.3} Solutions in the sense of duality of linear nonlocal
elliptic equations with measure data are considered in
\cite{KarlsenPetittaUlusoy} in case $A=\Delta^{\alpha}$ on $\BRD$
with $\alpha\in (\frac12,1)$ and $d\ge2$. It is known (see
\cite[Exercise 2.2.1]{Fukushima}) that in this case
\[
Uf(x)=c(d,\alpha)\int_{\BRD}\frac{f(y)}{|x-y|^{d-2\alpha}}\,dy,
\quad f\in C_{0}(\BRD).
\]
From this one can easily deduce that $C_{0}^{+}(\BRD)\subset
S^{(0)}_{00}$. It follows in particular that if $u\in \mathcal{A}$
then  $u\in L^{1}_{loc}(E;m)$. It is also known (see
\cite[Exercise 1.5.2]{Fukushima}) that the form $(\EE,D[\EE])$
corresponding to $A$ is transient. Therefore in case $A$ has the
special form considered in \cite{KarlsenPetittaUlusoy} our
definition of a solution by duality agrees with the one introduced
in \cite{KarlsenPetittaUlusoy}.
\end{uw}

\begin{stw}
\label{prop444} Assume that $(\EE,D[\EE])$ is transient and
$\mu\in\MM_{0,b}$. If $u$ is quasi-continuous and $f_u\in
L^1(E;m)$, then $u$ is a probabilistic solution of
\mbox{\rm(\ref{eq2.5})} iff $u$ is a solution of
\mbox{\rm(\ref{eq2.5})} in the sense of duality.
\end{stw}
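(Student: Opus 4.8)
The plan is to exhibit the natural candidate for a solution and to show that each of the two notions is equivalent to $u$ agreeing with it q.e. Define
\[
\tilde u(x)=E_{x}\int_{0}^{\zeta}f_u(X_{t})\,dt+E_{x}\int_{0}^{\zeta}dA^{\mu}_{t},\qquad x\in E .
\]
First I would check that $\tilde u$ is well defined, finite q.e., and quasi-continuous. Since $f_u\in L^{1}(E;m)$ and $(\EE,D[\EE])$ is transient, $E_{x}\int_{0}^{\zeta}|f_u(X_{t})|\,dt<\infty$ for $m$-a.e. $x$, hence for q.e. $x$ by Lemma \ref{lm2.1} applied to the PCAF $\int_{0}^{\cdot}|f_u|(X_{s})\,ds$; likewise $\mu\in\MM_{0,b}$ and transiency give $E_{x}\int_{0}^{\zeta}d|A^{\mu}|_{t}<\infty$ q.e. (this is the $\mu$-part of the implication (A4)$\Rightarrow$(A4$'$) quoted in the introduction, combined with Lemma \ref{lm2.1}). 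Thus $\tilde u$ is finite q.e. and is a difference of the four nonnegative functions $x\mapsto E_{x}A^{f_u^{+}\cdot m}_{\zeta}$, $x\mapsto E_{x}A^{f_u^{-}\cdot m}_{\zeta}$, $x\mapsto E_{x}A^{\mu^{+}}_{\zeta}$, $x\mapsto E_{x}A^{\mu^{-}}_{\zeta}$, each quasi-continuous by Lemma \ref{lm2.2}; hence so is $\tilde u$.

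The key step is the identity
\begin{equation}
\langle\nu,\tilde u\rangle=(f_u,U\nu)_{L^{2}(E;m)}+\langle\mu,U\nu\rangle,\qquad\nu\in S^{(0)}_{00},\tag{$\star$}
\end{equation}
which in particular shows $\tilde u\in\mathcal{A}$. To prove $(\star)$ I would apply Lemma \ref{lm5.1} with $f\equiv1$ to the smooth measures $f_u^{+}\cdot m$ and $f_u^{-}\cdot m$ (smooth because $f_u\in L^{1}(E;m)$; their PCAFs are $\int_{0}^{\cdot}f_u^{\pm}(X_{s})\,ds$) and to $\mu^{+}$ and $\mu^{-}$, obtaining $E_{\nu}\int_{0}^{\zeta}f_u^{\pm}(X_{t})\,dt=(f_u^{\pm},U\nu)_{L^{2}(E;m)}$ and $E_{\nu}\int_{0}^{\zeta}dA^{\mu^{\pm}}_{t}=\langle\mu^{\pm},U\nu\rangle$; all four quantities are finite because $\|U\nu\|_{\infty}<\infty$, $f_u\in L^{1}(E;m)$ and $|\mu|(E)<\infty$. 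Since $\nu$ charges no set of zero capacity, $\tilde u$ equals the defining expression $\nu$-a.e., and it is $\nu$-integrable by the bounds just obtained, so integrating against $\nu$ and using $f_u=f_u^{+}-f_u^{-}$, $\mu=\mu^{+}-\mu^{-}$ gives $(\star)$.

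Granting $(\star)$, I claim that under the standing hypotheses each of the two notions is equivalent to ``$u=\tilde u$ q.e.''. For the probabilistic notion this is immediate: it requires only that $u$ be quasi-continuous (assumed), that $E_{x}\int_{0}^{\zeta}|f_u(X_{t})|\,dt<\infty$ q.e. (shown above), and that $u(x)=\tilde u(x)$ for q.e. $x$. For the duality notion: if $u=\tilde u$ q.e. then, since $u$ is quasi-continuous and $\nu$ charges no set of zero capacity, $\langle\nu,u\rangle=\langle\nu,\tilde u\rangle$, which is finite and equals $(f_u,U\nu)_{L^{2}(E;m)}+\langle\mu,U\nu\rangle$ by $(\star)$; together with $f_u\in L^{1}(E;m)$ this says $u$ is a solution in the sense of duality. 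Conversely, if $u$ is a solution in the sense of duality then $(\star)$ and \eqref{eq4.16} give $\langle\nu,u-\tilde u\rangle=0$ for every $\nu\in S^{(0)}_{00}$ (both $u$ and $\tilde u$ lie in $\mathcal{A}$, so the difference is legitimate), with $u-\tilde u$ quasi-continuous; the proof is finished once we know this forces $u-\tilde u=0$ q.e. Combining the two equivalences yields the proposition.

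The main obstacle is precisely this last point: a quasi-continuous $w$ with $\langle\nu,w\rangle=0$ for all $\nu\in S^{(0)}_{00}$ vanishes q.e. I would argue by contradiction. If $w\ne0$ on a set of positive capacity then, $\mbox{cap}$ being a Choquet capacity, $\mbox{cap}(\{w\ge\varepsilon\})>0$ or $\mbox{cap}(\{w\le-\varepsilon\})>0$ for some $\varepsilon>0$; assume the former. Since $w$ is quasi-continuous, $\{w\ge\varepsilon\}$ is quasi-closed, hence, up to a capacity-zero set, nearly Borel, so it contains a compact set $K$ with $0<\mbox{cap}(K)<\infty$. As $(\EE,D[\EE])$ is transient, $K$ has a $0$-order equilibrium measure $\nu_K$ whose potential $U\nu_K$ is the equilibrium potential of $K$, which is bounded by $1$; thus $\nu_K\in S^{(0)}_{00}$, $\nu_K$ is carried by $K$, and $\nu_K(E)>0$. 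Then $0=\langle\nu_K,w\rangle\ge\varepsilon\,\nu_K(E)>0$, a contradiction; the case $\mbox{cap}(\{w\le-\varepsilon\})>0$ is symmetric. (Alternatively, one may invoke the known fact that a quasi-continuous function which is nonnegative when tested against every measure in $S^{(0)}_{00}$ is nonnegative q.e.)
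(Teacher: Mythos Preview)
Your argument is correct and follows essentially the same route as the paper: define the candidate $\tilde u$ (the paper calls it $w$), show it is finite q.e.\ and quasi-continuous via Lemmas \ref{lm2.1}, \ref{lm2.2} and Proposition \ref{prop3.1}, establish the key identity $(\star)$ by Lemma \ref{lm5.1}, and conclude by showing that a quasi-continuous function annihilated by every $\nu\in S^{(0)}_{00}$ vanishes q.e. The only difference is that for this last step the paper simply invokes \cite[Theorem 2.2.3]{Fukushima} (exactly the ``known fact'' you mention in your parenthetical alternative), whereas you spell out a direct equilibrium-measure argument; your argument is fine in spirit, though the measurability and inner-regularity details for the set $\{w\ge\varepsilon\}$ are a bit delicate, so citing the Fukushima result is the cleaner way to close the proof.
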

\begin{dow}
Let $u$ be a solution of (\ref{eq2.5}) in the sense of duality.
Let us denote by $w(x)$ the right-hand side of (\ref{eq2.7}) if it
is  finite and put $w(x)=0$ otherwise. By Proposition
\ref{prop3.1}, $w$ is finite $m$-a.e., and hence, by Lemma
\ref{lm2.2}, $w$ is quasi-continuous. By Lemma \ref{lm5.1},
$w\in\mathcal{A}$ and
\[
\langle\nu,w\rangle=(f_u,U\nu)_{L^{2}(E;m)}
+\langle\mu,U\nu\rangle, \quad \nu\in S^{(0)}_{00}.
\]
Thus, $\langle\nu,u\rangle=\langle\nu,w\rangle$ for $\nu\in
S^{(0)}_{00}$. By \cite[Theorem 2.2.3]{Fukushima}, this implies
that $u=w$ q.e. since $u,v$ are quasi-continuous.

Conversely, assume that $u$ is a probabilistic solution of
(\ref{eq2.5}). Then again by Lemma \ref{lm5.1}, $u\in\mathcal{A}$
and $u$ satisfies (\ref{eq4.16}).
\end{dow}
\medskip

In view of Proposition \ref{prop444} there arise natural
questions. When $f_{u}\in L^{1}(E;m)$? Is the assumption
$\mu\in\MM_{0,b}$, $f(\cdot,0)\in L^{1}(E;m)$ sufficient for
integrability of $f_u$? Is it always true that a probabilistic
solution $u$ of (\ref{eq2.5}) or a solution in the sense of
duality is locally integrable? We will show that  if
$\mu\in\MM_{0,b}$, $f(\cdot,0)\in L^{1}(E;m)$ then $f_{u}\in
L^{1}(E;m)$ but  $u$ need not be locally integrable.

Let $\mu$ be a Borel measure on $E$. In the sequel, $\|\mu\|_{TV}$
stands for its  total variation norm.

\begin{lm}
\label{lm.ap1} Assume that $(\mathcal{E},D[\mathcal{E}])$ is
transient, $\mu_{1}\in S$,  $\mu_{2}\in \MM^{+}_{0,b}$. If
\[
E_{x}\int_{0}^{\zeta} dA^{\mu_{1}}_t \le E_{x}\int_{0}^{\zeta}
dA^{\mu_{2}}_t
\]
for $m$-a.e. $x\in E$ then $\|\mu_{1}\|_{TV}\le \|\mu_{2}\|_{TV}$.
\end{lm}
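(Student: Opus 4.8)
The plan. Since both measures are nonnegative, $\|\mu_i\|_{TV}=\mu_i(E)$, and there is nothing to prove when $\mu_2(E)=\infty$; so the goal is to show $\mu_1(E)\le\mu_2(E)$ in the case where $\mu_2\in\MM^{+}_{0,b}$ has finite mass. Put $u_i(x)=E_x\int_0^{\zeta}dA^{\mu_i}_t$. The key tool is Lemma \ref{lm5.1} applied with $f\equiv1$, which gives, for every $\nu\in S^{(0)}_{00}$,
\[
\int_E u_i\,d\nu \;=\; E_{\nu}\int_0^{\zeta}dA^{\mu_i}_t \;=\; \langle\mu_i,U\nu\rangle .
\]
Thus the hypothesis, once transferred from $m$ to the test measures $\nu$, becomes $\langle\mu_1,U\nu\rangle\le\langle\mu_2,U\nu\rangle$ for all $\nu\in S^{(0)}_{00}$, and it then suffices to let $U\nu$ increase to the constant function $1$.

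First I would pass from the $m$-a.e. inequality $u_1\le u_2$ to a quasi-everywhere one. Because $\EE$ is transient and $\mu_2\in\MM^{+}_{0,b}$, we have $u_2(x)<\infty$ for $m$-a.e. $x$ (this is precisely the part of the transient theory giving the implication from (A4) to (A4$'$)); hence $u_2$ is finite q.e. by Lemma \ref{lm2.1} and admits a quasi-continuous version by Lemma \ref{lm2.2}. The hypothesis then forces $u_1<\infty$ $m$-a.e., so $u_1$ is likewise finite q.e. and quasi-continuous. For quasi-continuous functions an inequality that holds $m$-a.e. holds q.e. (this is standard and follows immediately from the probabilistic description of quasi-continuity used in the proof of Lemma \ref{lm2.2}), so $u_1\le u_2$ q.e. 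Since every $\nu\in S^{(0)}_{00}$ charges no set of zero capacity, integrating against $\nu$ yields $\langle\mu_1,U\nu\rangle\le\langle\mu_2,U\nu\rangle$ for all $\nu\in S^{(0)}_{00}$.

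Next I would construct a sequence $\nu_n\in S^{(0)}_{00}$ with $U\nu_n\nearrow1$ q.e. Take compact sets $K_n$ with $K_n\uparrow E$. Each $K_n$ has finite $0$-order capacity (it is dominated by $\mbox{cap}(K_n)$, which is finite since the form is regular), so the $0$-order equilibrium measure $\nu_n$ of $K_n$ belongs to $S^{(0)}_{0}$ and its potential satisfies $U\nu_n(x)=P_x(\sigma_{K_n}<\infty)$ q.e.; as $0\le U\nu_n\le1$ q.e., in fact $\nu_n\in S^{(0)}_{00}$. Since the process stays in $E$ up to its (positive) lifetime, $P_x(\sigma_{K_n}<\infty)\uparrow P_x(\sigma_E<\infty)=1$ for q.e. $x$, hence $U\nu_n\nearrow1$ q.e. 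Inserting $\nu=\nu_n$ into the inequality of the previous paragraph and letting $n\to\infty$, monotone convergence (again using that $\mu_1,\mu_2$ charge no polar set) gives $\langle\mu_i,U\nu_n\rangle\uparrow\mu_i(E)$ for $i=1,2$, so passing to the limit on both sides of $\langle\mu_1,U\nu_n\rangle\le\langle\mu_2,U\nu_n\rangle$ yields $\mu_1(E)\le\mu_2(E)$, i.e. $\|\mu_1\|_{TV}\le\|\mu_2\|_{TV}$.

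The step I expect to be the main obstacle is the construction of $\{\nu_n\}$: one has to extract from \cite{Fukushima} that in a transient regular form compact sets have finite $0$-order capacity, that the associated $0$-order equilibrium measures belong to $S^{(0)}_{00}$, and that their potentials coincide q.e. with the hitting probabilities $P_{\cdot}(\sigma_{K_n}<\infty)$, and then verify that these increase to $1$ q.e. The quasi-everywhere upgrade in the second paragraph is routine, but it does rely essentially on transiency to guarantee $u_2<\infty$ $m$-a.e.; without transiency there may be no finite potential of the required type and the statement itself can fail.
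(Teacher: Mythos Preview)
Your proof is correct and follows essentially the same route as the paper: upgrade the $m$-a.e.\ inequality to q.e., apply Lemma~\ref{lm5.1} to obtain $\langle\mu_1,U\nu\rangle\le\langle\mu_2,U\nu\rangle$ for all $\nu\in S^{(0)}_{00}$, and then test against the $0$-order equilibrium measures of an exhausting sequence of sets so that the potentials increase to $1$ q.e. The only cosmetic differences are that the paper uses open sets of finite capacity rather than compacts and invokes Fatou's lemma rather than monotone convergence at the final step.
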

\begin{dow}
By Lemma \ref{lm2.1} and \cite[Lemma 2.1.4]{Fukushima}, $
E_{x}\int_{0}^{\zeta}dA^{\mu_{1}}_t \le
E_{x}\int_{0}^{\zeta}dA^{\mu_{2}}_t$ for q.e. $x\in E$ and hence,
by Lemma \ref{lm5.1},
\begin{align}
\label{ap1} \langle \mu_{1}, U\nu\rangle\le \langle \mu_{2},
U\nu\rangle
\end{align}
for every $\nu\in S^{(0)}_{00}$. Since $E$ is locally compact and
$(\mathcal{E}, D[\mathcal{E}])$ is regular, there is a sequence
$\{U_{k}\}$ of decreasing open sets such that
$\mbox{cap}(U_{k})<\infty$ and $\bigcup_{k\ge 1}U_{k}=E$. Let
$e^{(0)}_{k}$ be the (0-order) equilibrium associated with the set
$U_{k}$ (see \cite{Fukushima} page 71). Then by the 0-order
counterpart of \cite[Lemma 2.1.1 ]{Fukushima} (see comments before
Lemma 2.1.8 in \cite{Fukushima}),  $0\le e^{(0)}_{k}\le 1$ q.e.,
$e^{(0)}(x)=1$ for q.e. $x\in U_{k}$, and
$e^{(0)}_{k}=U(\beta_{k})$, where $\beta_{k}\in S^{(0)}_{00}$ is
the measure associated with the 0-order potential $e^{(0)}_{k}$.
By (\ref{ap1}),
\[
\langle \mu_{1}, U(\beta_{k})\rangle
\le \langle \mu_{2}, U(\beta_{k})\rangle,\quad k\ge 1.
\]
Letting $k\rightarrow\infty$ and using Fatou's lemma gives the
desired result.
\end{dow}

\begin{stw}
\label{ap.stw1} Assume that $(\mathcal{E}, D[\mathcal{E}])$ is
transient, $\mu\in\MM_{0,b}$ and $f(\cdot,0)\in L^{1}(E;m)$. If
$u$ is a probabilistic solution of \mbox{\rm(\ref{eq2.5})}, then
$f_{u}\in L^{1}(E;m)$ and
\[
\|f_{u}\|_{L^{1}(E;m)}
\le\|f(\cdot,0)\|_{L^{1}(E;m)}+\|\mu\|_{TV}.
\]
\end{stw}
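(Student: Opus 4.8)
The plan is to represent $u$ probabilistically via a backward equation, derive for it the $L^{1}$ a priori bound (Lemma \ref{lm0.2} with $p=1$), and then turn the resulting estimate on continuous additive functionals into an estimate on total variations by means of Lemma \ref{lm.ap1}.

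\emph{Step 1 (representation).} Since $(\EE,D[\EE])$ is transient, $f(\cdot,0)\in L^{1}(E;m)$ and $\mu\in\MM_{0,b}$, condition (A4$'$) holds for $m$-a.e.\ $x$ (as recalled in the Introduction), hence for q.e.\ $x$ by Lemma \ref{lm2.1}; in particular $E_{x}\int_{0}^{\zeta}|f(X_{t},0)|\,dt$ and $E_{x}\int_{0}^{\zeta}d|A^{\mu}|_{t}$ are finite q.e. Fix such an $x$ and set $Y_{t}=u(X_{t})$. As $u$ is a probabilistic solution, $E_{x}\int_{0}^{\zeta}|f_{u}(X_{s})|\,ds<\infty$, so $W:=\int_{0}^{\zeta}f_{u}(X_{s})\,ds+\int_{0}^{\zeta}dA^{\mu}_{s}$ is $P_{x}$-integrable; put $M_{t}=E_{x}(W\,|\,\FF_{t})-E_{x}W$, a closed (hence uniformly integrable) martingale. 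Applying the strong Markov property and additivity to (\ref{eq2.7}) evaluated at $X_{t}$ gives $Y_{t}=E_{x}W+M_{t}-\int_{0}^{t\wedge\zeta}f_{u}(X_{s})\,ds-\int_{0}^{t\wedge\zeta}dA^{\mu}_{s}$, which rearranges into BSDE$(\zeta,f^{0}+dA^{\mu})$ for the pair $(Y,M)$. Moreover $|Y_{\tau}|=|u(X_{\tau})|\le E_{x}\big(\int_{0}^{\zeta}|f_{u}(X_{s})|\,ds+|A^{\mu}|_{\zeta}\,\big|\,\FF_{\tau}\big)$ for every stopping time $\tau$, so $Y$ is of class (D), and $Y_{t\wedge\zeta}\to0$ as $t\to\infty$.

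\emph{Step 2 (a priori estimate and conclusion).} For $T>0$ I would apply the It\^o--Tanaka formula to $t\mapsto|Y_{t}|$ on $[0,T\wedge\zeta]$ and use monotonicity (A2) through the identity $|f_{u}(X_{t})-f(X_{t},0)|=-\hat\sgn(Y_{t})\big(f(X_{t},Y_{t})-f(X_{t},0)\big)$. Proceeding as in the proof of Lemma \ref{lm0.2} with $p=1$ — localizing the martingale term by a fundamental sequence, taking $E_{x}$, then letting the localization index and $T$ go to infinity (using that $Y$ is of class (D), that $Y_{t\wedge\zeta}\to0$, and monotone convergence for the remaining terms) — one obtains, for $m$-a.e.\ $x$,
\[
E_{x}\int_{0}^{\zeta}|f_{u}(X_{t})|\,dt\ \le\ E_{x}\int_{0}^{\zeta}|f(X_{t},0)|\,dt+E_{x}\int_{0}^{\zeta}d|A^{\mu}|_{t}.
\]
The left side equals $E_{x}\int_{0}^{\zeta}dA^{\mu_{1}}_{t}$ with $\mu_{1}=|f_{u}|\cdot m$, which lies in $S$ (it is the Revuz measure of the PCAF $\int_{0}^{\cdot}|f_{u}(X_{s})|\,ds$, finite by the integrability built into the notion of a probabilistic solution); the right side equals $E_{x}\int_{0}^{\zeta}dA^{\mu_{2}}_{t}$ with $\mu_{2}=|f(\cdot,0)|\cdot m+|\mu|\in\MM^{+}_{0,b}$ (finite since $f(\cdot,0)\in L^{1}(E;m)$ and $\mu\in\MM_{0,b}$, and smooth by Remark \ref{uwww}). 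Lemma \ref{lm.ap1} applied with these $\mu_{1},\mu_{2}$ then gives $\|\mu_{1}\|_{TV}\le\|\mu_{2}\|_{TV}$, i.e.\ $f_{u}\in L^{1}(E;m)$ and $\|f_{u}\|_{L^{1}(E;m)}\le\|f(\cdot,0)\|_{L^{1}(E;m)}+\|\mu\|_{TV}$, which is the assertion.

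\emph{Main obstacle.} The delicate point is the $L^{1}$ a priori estimate \emph{with constant $1$}: combining the It\^o--Tanaka bound for $-\int_{0}^{\cdot}\hat\sgn(Y_{t})f(X_{t},Y_{t})\,dt$ with the crude triangle inequality $|f_{u}|\le|f_{u}-f(\cdot,0)|+|f(\cdot,0)|$ introduces a spurious factor $2$ in front of $\|f(\cdot,0)\|_{L^{1}(E;m)}$, so one must exploit (A2) together with the cancellation already contained in the It\^o--Tanaka decomposition of $|Y|$, exactly as in the $p=1$ case of Lemma \ref{lm0.2}. The auxiliary facts — $P_x$-integrability and closedness of $M$, the vanishing of $E_{x}\int_{0}^{T\wedge\zeta}\hat\sgn(Y_{t-})\,dM_{t}$, and the interchange of $E_{x}$ with the two limits — are routine and use only that $Y$ is of class (D) and that (A4$'$) holds.
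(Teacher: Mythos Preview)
Your proof is correct and follows essentially the same route as the paper: derive the pointwise inequality
\[
E_{x}\int_{0}^{\zeta}|f_{u}(X_{t})|\,dt\le E_{x}\int_{0}^{\zeta}|f(X_{t},0)|\,dt+E_{x}\int_{0}^{\zeta}d|A^{\mu}|_{t}
\]
from the $p=1$ case of Lemma~\ref{lm0.2}, then convert it into a total-variation bound via Lemma~\ref{lm.ap1}. The only difference is cosmetic: where the paper simply cites Theorem~\ref{th2.1} to obtain the BSDE representation $u(X_{t})=Y^{x}_{t}$ (and hence the applicability of Lemma~\ref{lm0.2}), you rederive that representation by hand from the Markov property in Step~1 --- which is fine, and in fact uses slightly fewer hypotheses, but is not a genuinely different idea.
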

\begin{dow}
By Lemma \ref{lm0.2} and  Theorem \ref{th2.1},
\[
E_{x}\int_{0}^{\zeta} |f_{u}(X_{t})|\,dt \le E_{x}\int_{0}^{\zeta}
|f(X_{t},0)|\,dt + E_{x}\int_{0}^{\zeta} d|A^{\mu}|_{t}
\]
for $m$-a.e. $x\in E$. Therefore the desired inequality follows
from Lemma \ref{lm.ap1}.
\end{dow}

\begin{wn}
\label{wn.iwrty} If $(\EE,D[\EE])$ is transient and \mbox{\rm(A4)}
is satisfied, then $u$ is a probabilistic solution of
\mbox{\rm(\ref{eq2.5})} iff it is a solution of
\mbox{\rm(\ref{eq2.5})} in the sense of duality.
\end{wn}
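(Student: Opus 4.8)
\emph{Proof plan.} The plan is to obtain the corollary at once from Proposition \ref{prop444} and Proposition \ref{ap.stw1}, after observing that condition (A4) is exactly the conjunction of the two hypotheses ``$\mu\in\MM_{0,b}$'' and ``$f(\cdot,0)\in L^{1}(E;m)$'' that appear in those results. First I would recall that, by the implication (A4)$\Rightarrow$(A4$'$) valid for transient forms, under the assumptions of the corollary one has $E_x|A^{\mu}|_{\zeta}<\infty$ for $m$-a.e.\ $x\in E$; hence the notion of a probabilistic solution of (\ref{eq2.5}) is well posed, and both directions of the equivalence can be addressed.

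For the forward implication, suppose $u$ is a probabilistic solution of (\ref{eq2.5}). Then $u$ is quasi-continuous by definition, and since $(\EE,D[\EE])$ is transient, $\mu\in\MM_{0,b}$ and $f(\cdot,0)\in L^{1}(E;m)$, Proposition \ref{ap.stw1} gives $f_u\in L^{1}(E;m)$. Thus $u$ fulfills the hypotheses of Proposition \ref{prop444}, which then yields that $u$ solves (\ref{eq2.5}) in the sense of duality.

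For the converse, suppose $u$ solves (\ref{eq2.5}) in the sense of duality. By Definition \ref{def4.2} this means $u\in\mathcal{A}$---so in particular $u$ is quasi-continuous---and $f_u\in L^{1}(E;m)$. Hence the hypotheses of Proposition \ref{prop444} hold once more, and its other implication shows that $u$ is a probabilistic solution of (\ref{eq2.5}).

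Since all the work has already been done, there is no real obstacle here: the corollary merely packages Propositions \ref{prop444} and \ref{ap.stw1} into a single statement. The only step that deserves a moment's care is the one noted at the outset, namely checking that (A4) together with transiency guarantees $E_x|A^{\mu}|_{\zeta}<\infty$ for $m$-a.e.\ $x$ (equivalently, that (A4) implies (A4$'$)), so that ``probabilistic solution of (\ref{eq2.5})'' is a non-vacuous notion in this setting.
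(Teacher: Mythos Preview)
Your proposal is correct and matches the paper's approach: the corollary is stated without proof immediately after Proposition~\ref{ap.stw1}, and is meant to follow at once by combining Propositions~\ref{prop444} and~\ref{ap.stw1} exactly as you describe. Your observation that (A4) together with transiency ensures (A4$'$)---so that the notion of probabilistic solution is well posed---is also in line with the paper, which establishes this via Proposition~\ref{prop3.1}.
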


\begin{prz}
\label{prz5.7} To show that in general a probabilistic solution of
(\ref{eq2.5}) is not locally integrable let us consider the
following trivial form
\[
\mathcal{E}(u,v)=\int_{-1}^{1}c(x)u(x)v(x)\,dx,\quad u,v\in
D[\mathcal{E}]=L^{2}(D;m),
\]
where $D=(-1,1)$, $c(x)=|x|$ and $m$ is the Lebesgue measure. Then
$(\mathcal{E},D[\mathcal{E}])$ is a transient regular Dirichlet
form and by Theorem \ref{th2.1} there exists a unique solution $u$ of
the equation
\[
-Au=1.
\]
Obviously, $u$ is given by the formula
\[
u(x)=|x|^{-1}\,,\quad
x\in D,
\]
and so is not locally integrable.
\end{prz}


\begin{uw}
\label{uw5.8}
Local integrability of $u$ is related to the
condition
\begin{equation}
\label{eq5.5}\forall K\subset E, K\mbox{-compact},\quad
U\mathbf{1}_{K}\in L^{\infty}(E;m).
\end{equation}
To see this, let us consider a transient regular Dirichlet form
$(\mathcal{E},D[\mathcal{E}])$. Suppose that for any $f\in
L^{1}(E;m)$ a solution $u$ of the problem
\[
-Au=f
\]
is locally integrable. Then by \cite[Theorem 5.1.3]{Fukushima},
for every compact $K\subset E$ and nonnegative $f\in L^{1}(E;m)$,
\[
\int_{K}|u|\,dm=\int_{K} u\,dm=(f,U\mathbf{1}_{K})_{L^2(E;m)}
<\infty,
\]
which implies that (\ref{eq5.5}) is satisfied. Conversely, assume
that (\ref{eq5.5}) is satisfied.  Let $u$ be a solution of the
problem (\ref{eq2.5}) with $f,\mu$ satisfying the assumptions of
Proposition \ref{ap.stw1}. Then applying  \cite[Theorem
5.1.3]{Fukushima} shows that for every compact $K\subset E$,
\[
\int_{K}|u|\,dm\le(|f_u|, U\mathbf{1}_{K})_{L^2(E;m)} +\langle
|\mu|,U\mathbf{1}_{K}\rangle,
\]
and hence (\ref{eq5.5}) is satisfied since $f_u\in L^1(E;m)$. Some
examples of forms satisfying (\ref{eq5.5}) will be given in
Section \ref{sec6}.
\end{uw}

\begin{stw}
\label{prop44} Assume that $(\EE,D[\EE])$ is transient and
$\mu\in\MM_{0,b}$. Then if $u$ is a solution of
\mbox{\rm(\ref{eq2.5})} and  $f_{u}\in L^{1}(E;m)$ then
$T_{k}(u)\in\FF_{e}$ for every $k\ge 0$. Moreover, for every $k\ge
0$,
\begin{equation}
\label{eq5.03} \EE(T_{k}(u),T_{k}(u))\le
k(\|f_{u}\|_{L^{1}(E;m)}+\|\mu\|_{TV}).
\end{equation}
\end{stw}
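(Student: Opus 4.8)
The plan is to establish the estimate first for solutions that genuinely belong to $\FF_{e}$ and then to pass to the limit using the weak topology of the Hilbert space $(\FF_{e},\EE)$.

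\emph{Regularization.} By (\ref{eq2.7}), for q.e. $x\in E$,
\begin{align*}
u(x)=\Big(E_{x}\int_{0}^{\zeta}f_{u}^{+}(X_{t})\,dt+E_{x}\int_{0}^{\zeta}dA^{\mu^{+}}_{t}\Big)
-\Big(E_{x}\int_{0}^{\zeta}f_{u}^{-}(X_{t})\,dt+E_{x}\int_{0}^{\zeta}dA^{\mu^{-}}_{t}\Big),
\end{align*}
so $u$ is, q.e., the difference of the Green potentials of $\rho'=f_{u}^{+}\cdot m+\mu^{+}$ and $\rho''=f_{u}^{-}\cdot m+\mu^{-}$; since $m$, hence $f_{u}^{\pm}\cdot m$, charges no set of zero capacity, $\rho',\rho''\in\MM^{+}_{0,b}$ and $\|\rho'\|_{TV}+\|\rho''\|_{TV}=\|f_{u}\|_{L^{1}(E;m)}+\|\mu\|_{TV}$. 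Using \cite[Theorem 2.2.4]{Fukushima} I would pick a generalized nest $\{F_{n}\}$ with $\mathbf{1}_{F_{n}}\cdot(\rho'+\rho'')\in S^{(0)}_{00}$ and set $\rho'_{n}=\mathbf{1}_{F_{n}}\cdot\rho'$, $\rho''_{n}=\mathbf{1}_{F_{n}}\cdot\rho''$; these belong to $S^{(0)}_{00}\subset S^{(0)}_{0}$, so $U\rho'_{n},U\rho''_{n}\in\FF_{e}$ and $u_{n}:=U\rho'_{n}-U\rho''_{n}\in\FF_{e}$. By monotonicity of the Revuz correspondence $A^{\rho'_{n}}\uparrow A^{\rho'}$ and $A^{\rho''_{n}}\uparrow A^{\rho''}$ q.e., and since $U\rho'_{n}(x)=E_{x}\int_{0}^{\zeta}dA^{\rho'_{n}}_{t}$ (similarly for $\rho''_{n}$), monotone convergence gives $u_{n}\to u$ q.e.

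\emph{Estimate for $u_{n}$.} Since $T_{k}$ is a normal contraction, $T_{k}(u_{n})\in\FF_{e}$. The elementary fact I would use is that
\[
\EE(v,T_{k}(v))\ge\EE(T_{k}(v),T_{k}(v)),\qquad v\in\FF_{e},
\]
which follows by writing $v=T_{k}(v)+\big(v-T_{k}(v)\big)$ and checking, via the Beurling--Deny decomposition of $\EE$, that the strongly local, the jumping and the killing contributions to $\EE\big(v-T_{k}(v),T_{k}(v)\big)$ are all nonnegative (for the strongly local part one uses the chain rule for energy measures and the fact that energy measures do not charge level sets). Combined with the defining relation of the $0$-order potential, which by the convention recalled after the definition of $S^{(0)}_{0}$ is valid for every test function in $\FF_{e}$, and with $|T_{k}(u_{n})|\le k$, this yields
\begin{align*}
\EE(T_{k}(u_{n}),T_{k}(u_{n}))
&\le\EE(u_{n},T_{k}(u_{n}))=\langle\rho'_{n},T_{k}(u_{n})\rangle-\langle\rho''_{n},T_{k}(u_{n})\rangle\\
&\le k\big(\|\rho'_{n}\|_{TV}+\|\rho''_{n}\|_{TV}\big)\le k\big(\|f_{u}\|_{L^{1}(E;m)}+\|\mu\|_{TV}\big).
\end{align*}

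\emph{Passage to the limit.} The bound above makes $\{T_{k}(u_{n})\}$ bounded in the Hilbert space $(\FF_{e},\EE)$, so along a subsequence $T_{k}(u_{n})\rightharpoonup w$ weakly in $\FF_{e}$ with $\EE(w,w)\le\liminf_{n}\EE(T_{k}(u_{n}),T_{k}(u_{n}))\le k(\|f_{u}\|_{L^{1}(E;m)}+\|\mu\|_{TV})$. To identify $w$ I would test against $S^{(0)}_{00}$: for $\nu\in S^{(0)}_{00}$, weak convergence gives $\langle\nu,T_{k}(u_{n})\rangle=\EE(T_{k}(u_{n}),U\nu)\to\EE(w,U\nu)=\langle\nu,w\rangle$, while $T_{k}(u_{n})\to T_{k}(u)$ q.e., hence $\nu$-a.e., together with $|T_{k}(u_{n})|\le k$ gives $\langle\nu,T_{k}(u_{n})\rangle\to\langle\nu,T_{k}(u)\rangle$ by dominated convergence; thus $\langle\nu,w\rangle=\langle\nu,T_{k}(u)\rangle$ for all $\nu\in S^{(0)}_{00}$, and \cite[Theorem 2.2.3]{Fukushima} forces $w=T_{k}(u)$ q.e. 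Consequently $T_{k}(u)\in\FF_{e}$ and (\ref{eq5.03}) holds.

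The step I expect to be the main obstacle is precisely the passage to the limit in the energy estimate. The regularized functions $u_{n}$ are honest elements of $\FF_{e}$ because they are differences of potentials of measures in $S^{(0)}_{00}$, and that is exactly what makes the integration by parts against $T_{k}(u_{n})$ legitimate; but for a general $\mu\in\MM_{0,b}$ and a merely $L^{1}$ datum $f_{u}$ the function $u$ need not lie in $\FF_{e}$ and need not even be locally integrable (Example \ref{prz5.7}), so both the membership $T_{k}(u)\in\FF_{e}$ and the sharp constant have to be recovered purely from weak compactness in $(\FF_{e},\EE)$ and from identifying the weak limit with $T_{k}(u)$ through the $S^{(0)}_{00}$-pairing.
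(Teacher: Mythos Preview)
Your argument is correct and follows essentially the same route as the paper: regularize via a generalized nest so that the truncated data lie in $S^{(0)}_{00}$ (the paper uses $S^{(0)}_{0}$), represent $u_{n}$ as a difference of $0$-order potentials in $\FF_{e}$, apply the Beurling--Deny inequality $\EE(T_{k}(u_{n}),T_{k}(u_{n}))\le\EE(u_{n},T_{k}(u_{n}))$, and pass to the limit by weak compactness in the Hilbert space $(\FF_{e},\EE)$. The only cosmetic difference is in identifying the weak limit: you test against $S^{(0)}_{00}$ and invoke \cite[Theorem 2.2.3]{Fukushima}, whereas the paper uses the embedding (\ref{eq5.4}) of $\FF_{e}$ into $L^{1}(E;g\cdot dm)$ together with $u_{n}\to u$ q.e.
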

\begin{dow}
Let $\{F_{n}\}$ be a generalized nest such that
$\mathbf{1}_{F_{n}}|f_{u}|\cdot m+\mathbf{1}_{F_{n}}|\mu|\in
S^{(0)}_{0}$. Set
\[
u_{n}(x)=E_{x}\int_{0}^{\zeta}\mathbf{1}_{F_{n}}f_{u}(X_{t})\,dt
+E_{x}\int_{0}^{\zeta}\mathbf{1}_{F_{n}}(X_{t})\,
dA_{t}^{\mu},\quad x\in E
\]
and define $v_{n}, w_{n}$ as $u_n$ but with $f_{u},\mu$ replaced
by $f_{u}^{+},\mu^{+}$ and $f_{u}^{-},\mu^{-}$, respectively. Of
course, $u_{n}=v_{n}-w_{n}$. Set
$\mu_n^+=\mathbf{1}_{F_{n}}(f^{+}_{u}\cdot m+\mu^{+})$,
$\mu_n^-=\mathbf{1}_{F_{n}}(f^{-}_{u}\cdot m+\mu^{-})$. By Lemma
\ref{lm5.1} and \cite[Theorem 2.2.3]{Fukushima},
\[
v_{n}(x)=U\mu^+_n(x),\quad w_{n}(x)=U\mu_n^{-}(x)
\]
for q.e. $x\in E$. Hence $u_{n}\in\FF_{e}$, and consequently
$T_k(u_{n})\in\FF_{e}$, because $T_ku_n$ is a normal contraction
of $u_n$ and by \cite[Theorem 1.5.3]{Fukushima} every normal
contraction operates on $(\EE,\FF_e)$. Therefore
\begin{align*}
\label{eq5.3} \EE(u_{n},T_{k}(u_{n}))
=\int_ET_{k}(u_{n})(d\mu^+_n-d\mu^-_n) & \le
k(\|\mathbf{1}_{F_{n}}f_{u}\|_{L^{1}(E;m)}
+\|\mathbf{1}_{F_{n}}\mu\|_{TV})\\
&\le k (\|f_{u}\|_{L^{1}(E;m)}+\|\mu\|_{TV}).
\end{align*}
From the Beurling-Deny representation of the form $\EE$ (see
\cite[Theorem 3.2.1]{Fukushima}) it follows that
\[
\EE(T_{k}(u_{n}),T_{k}(u_{n}))\le \EE(u_{n},T_{k}(u_{n})).
\]
Hence
\[
\sup_{n\ge1} \EE(T_{k}(u_{n}), T_{k}(u_{n}))<\infty
\]
for every $k\ge0$. On the other hand, as in proof of (\ref{eq2.1})
one can show that $u_{n}\rightarrow u$ q.e. Therefore
(\ref{eq5.03}) follows from (\ref{eq5.4}) and the fact that
$(\EE,\FF_{e})$ is a Hilbert space.
\end{dow}

\begin{stw}
\label{prop5.5} Under the assumptions of Proposition \ref{prop44}
the following condition of vanishing energy is satisfied:
\begin{equation}
\label{eq5.6} \EE(\Phi_{k}(u),\Phi_{k}(u))\le\int_{\{|u|\ge k\}}
|f_{u}(x)|\,m(dx) +\int_{\{|u|\ge k\}}\,d|\mu|,
\end{equation}
where $\Phi_{k}(r)=T_{1}(r-T_{k}(r))$, $r\in\BR$.
\end{stw}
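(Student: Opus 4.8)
The plan is to follow the scheme of the proof of Proposition \ref{prop44}; I keep its notation. Fix a generalized nest $\{F_n\}$ with $\mathbf{1}_{F_n}|f_u|\cdot m+\mathbf{1}_{F_n}|\mu|\in S^{(0)}_0$, put $\mu^{\pm}_n=\mathbf{1}_{F_n}(f^{\pm}_u\cdot m+\mu^{\pm})$ — these lie in $S^{(0)}_0$, being dominated by $\mathbf{1}_{F_n}(|f_u|\cdot m+|\mu|)$, and are finite since $f_u\in L^1(E;m)$ and $|\mu|(E)<\infty$ — and let $u_n=U\mu^+_n-U\mu^-_n\in\FF_e$, so that $u_n\to u$ q.e. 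Observe that $\Phi_k(r)=T_{k+1}(r)-T_k(r)$, so $\Phi_k(u)\in\FF_e$ already by Proposition \ref{prop44}; moreover $\Phi_k$ is a normal contraction (the composition of $T_1$ with the normal contraction $r\mapsto r-T_k(r)$), it is nondecreasing with $0\le\Phi'_k\le1$ a.e., $\Phi_k(0)=0$, and $|\Phi_k(r)|\le\mathbf{1}_{\{|r|>k\}}$; consequently $\Phi_k(u_n)\in\FF_e$ by \cite[Theorem 1.5.3]{Fukushima}.

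First I would bound the energy of $\Phi_k(u_n)$. Taking $v=\Phi_k(u_n)$ in the identity $\EE(U\sigma,v)=\langle v,\sigma\rangle$ (valid for $v\in\FF_e$, $\sigma\in S^{(0)}_0$; see \cite[Theorem 2.2.5]{Fukushima}) and using $\mu^+_n-\mu^-_n=\mathbf{1}_{F_n}f_u\cdot m+\mathbf{1}_{F_n}\mu$ one gets
\begin{align*}
\EE(u_n,\Phi_k(u_n))&=\int_{F_n}\Phi_k(u_n)f_u\,dm+\int_{F_n}\Phi_k(u_n)\,d\mu\\
&\le\int_{\{|u_n|>k\}}|f_u|\,dm+\int_{\{|u_n|>k\}}\,d|\mu|,
\end{align*}
where the inequality uses $|\Phi_k(u_n)|\le\mathbf{1}_{\{|u_n|>k\}}$; in particular the right-hand side is at most $\|f_u\|_{L^1(E;m)}+\|\mu\|_{TV}$. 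Next, from the Beurling--Deny representation of $\EE$ (\cite[Theorem 3.2.1]{Fukushima}), using the pointwise inequalities $(\Phi_k(a)-\Phi_k(b))^2\le(a-b)(\Phi_k(a)-\Phi_k(b))$ and $\Phi_k(a)^2\le a\Phi_k(a)$ for the jumping and killing parts and the chain rule together with $0\le\Phi'_k\le1$ for the strongly local part — exactly as is done for $T_k$ in the proof of Proposition \ref{prop44} — one obtains $\EE(\Phi_k(u_n),\Phi_k(u_n))\le\EE(u_n,\Phi_k(u_n))$, hence $\sup_n\EE(\Phi_k(u_n),\Phi_k(u_n))<\infty$.

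It then remains to let $n\to\infty$. Since $u_n\to u$ q.e., also $\Phi_k(u_n)\to\Phi_k(u)$ q.e.; as the energies are uniformly bounded and $(\EE,\FF_e)$ is a Hilbert space, arguing as in the proof of Proposition \ref{prop44} (using (\ref{eq5.4})) one identifies the $\FF_e$-limit with $\Phi_k(u)$ and obtains $\EE(\Phi_k(u),\Phi_k(u))\le\liminf_n\EE(\Phi_k(u_n),\Phi_k(u_n))$. Moreover $u_n\to u$ both $m$-a.e. and $|\mu|$-a.e. (the latter because $|\mu|$ charges no set of capacity zero), whence $\limsup_n\mathbf{1}_{\{|u_n|>k\}}\le\mathbf{1}_{\{|u|\ge k\}}$ $m$- and $|\mu|$-a.e.; since $f_u\in L^1(E;m)$ and $|\mu|(E)<\infty$, the reverse form of Fatou's lemma gives $\limsup_n\int_{\{|u_n|>k\}}|f_u|\,dm\le\int_{\{|u|\ge k\}}|f_u|\,dm$ and $\limsup_n\int_{\{|u_n|>k\}}\,d|\mu|\le\int_{\{|u|\ge k\}}\,d|\mu|$. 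Combining these with the estimates above yields (\ref{eq5.6}). The main obstacle is precisely this limit passage: establishing lower semicontinuity of the energy along the merely q.e.-convergent sequence $\Phi_k(u_n)$, and carrying out the reverse-Fatou step on the superlevel sets — it is because the limiting superlevel set is the \emph{closed} set $\{|u|\ge k\}$ that $\{|u|\ge k\}$, rather than $\{|u|>k\}$, appears on the right-hand side of (\ref{eq5.6}). Both points, however, are handled exactly as in the proof of Proposition \ref{prop44}.
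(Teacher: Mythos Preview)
Your proof is correct and follows essentially the same route as the paper's: approximate $u$ by $u_n\in\FF_e$ as in Proposition~\ref{prop44}, use the potential identity to compute $\EE(u_n,\Phi_k(u_n))$, invoke the Beurling--Deny representation to get $\EE(\Phi_k(u_n),\Phi_k(u_n))\le\EE(u_n,\Phi_k(u_n))$, and pass to the limit. Your limit passage is in fact more carefully argued than the paper's---the paper asserts outright convergence of both sides, whereas you correctly rely only on lower semicontinuity of the energy and reverse Fatou for the superlevel-set integrals, and you rightly note that this is why the \emph{closed} set $\{|u|\ge k\}$ appears on the right.
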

\begin{proof}
Let us define $u_n$ as in the proof of Proposition \ref{prop44}.
Then $\Phi_k(u_{n})\in\FF_e$ since $u_n\in\FF_e$ and $T_k$ is a
normal contraction for every $k\ge0$. Therefore
\[
\EE(u_{n},\Phi_{k}(u_{n}))
=(\mathbf{1}_{F_{n}}f_u,\Phi_{k}(u_{n}))_{L^{2}(E;m)} +\langle
\mathbf{1}_{F_{n}}\cdot\mu, \Phi_{k}(u_{n})\rangle.
\]
By the above equality and the definition of $\Phi_k$,
\[
\EE(u_{n},\Phi_{k}(u_{n})) \le\int_{\{|u_{n}|\ge k\}}
|f_{u}(x)|\,m(dx) +\int_{\{|u_{n}|\ge k\}}\,d|\mu|.
\]
Since $u_{n}\rightarrow u$ q.e. (see the proof of (\ref{eq2.1})),
it follows that
\[
\int_{\{|u_{n}|\ge k\}}|f_{u}(x)|\,m(dx) +\int_{\{|u_{n}|\ge
k\}}\,d|\mu|\rightarrow\int_{\{|u|\ge k\}}|f_{u}(x)|\,m(dx)
+\int_{\{|u|\ge k\}}\,d|\mu|.
\]
From the Buerling-Deny representation of the form $\EE$ (see
\cite[Theorem 3.2.1]{Fukushima} one can deduce that
\[
\EE(u_{n},\Phi_{k}(u_{n}))\ge\EE(\Phi_{k}(u_{n}),\Phi_{k}(u_{n})).
\]
Finally, by Proposition \ref{prop44},
\[
\EE(\Phi_{k}(u_{n}),\Phi_{k}(u_{n}))\rightarrow
\EE(\Phi_{k}(u),\Phi_{k}(u)),
\]
and the proof of (\ref{eq5.6}) is complete.
\end{proof}

\begin{uw}
From Proposition \ref{prop5.5} it follows in particular that if
$A$ is a uniformly elliptic divergence form operator on
$D\subset\BR^d$ with $d\ge3$ (i.e. $A$ corresponds to the form
$(\EE(D),D[\EE])$ defined by (\ref{eq4.0}) with coefficients
$a_{ij}$ satisfying (\ref{eq4.1})), then the probabilistic
solution of (\ref{eq2.5}) is a renormalized solution (see
\cite{BBGGPV}) of (\ref{eq2.5}), because in that case
$D[\EE]=\FF_{e}=H_{0}^{1}(D)$ by Poincar\'e's inequality. It is
worth pointing out that Propositions \ref{prop44} and
\ref{prop5.5} suggest possibility of extending the definition of
renormalized solutions to  general operators corresponding to
transient regular Dirichlet forms, notably to some  nonlocal
operators. Let us also note that renormalized solutions of some
elliptic equations with $L^1$-data and $A$ being a fractional
Laplacian on $\BR^d$ are studied in \cite{AAB}.
\end{uw}

\begin{uw}
Let $u$ be a solution of (\ref{eq2.5}). If $f_{u}\cdot m\in
S^{(0)}_0$ and $\mu\in S_{0}^{(0)}$ then by Lemma \ref{lm5.1} and
\cite[Theorem 2.2.5]{Fukushima}, $U(f_u+\mu)\in \FF_{e}$,
$u=U(f_u+\mu)$ q.e. and for every $v\in\FF_{e}$,
\[
\EE(u,v)=(f_{u},v)_{L^2(E;m)}+\langle v,\mu\rangle,
\]
i.e. $u$ is the usual weak solution of (\ref{eq2.5}).
\end{uw}

From Remark \ref{uwww} it follows that condition (A3) implies
(A3$'$). That (A4) implies (A4$'$) follows from the  proposition
given below.

\begin{stw}
\label{prop3.1} If $(\EE,D[\EE])$ is transient and
$\mu\in\MM^+_{0,b}$ then for $m$-a.e. $x\in E$,
\[
E_{x}\int_{0}^{\zeta}dA^{\mu}_{t}<\infty.
\]
\end{stw}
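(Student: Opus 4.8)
The plan is to reduce the claim to the well-known potential-theoretic fact that, for a transient form, the $0$-order potential operator $U$ sends a bounded smooth measure of finite total variation into an $m$-a.e.\ finite function, and then to identify $E_x\int_0^\zeta dA^\mu_t$ with $U\mu(x)$ $m$-a.e. First I would reduce to the nonnegative case: since $\mu\in\MM^+_{0,b}$ is assumed here, this is automatic, but I would note that $A^\mu$ is then a PCAF and $t\mapsto A^\mu_t$ is nondecreasing, so $E_x A^\mu_\zeta=E_x\int_0^\zeta dA^\mu_t$ is a well-defined element of $[0,\infty]$. The function $x\mapsto E_xA^\mu_\zeta$ is exactly the kind of function treated in Lemma~\ref{lm2.2}; the issue is only its finiteness $m$-a.e.

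The key step is to exhaust $\mu$ from inside by measures in $S^{(0)}_{00}$. As recalled in the excerpt (following \cite[Corollary 2.2.2]{Fukushima} and \cite[Theorem 2.2.4]{Fukushima}), for every $\mu\in S$ there is a generalized nest $\{F_n\}$ with $\mathbf{1}_{F_n}\cdot\mu\in S^{(0)}_{00}$. For such $\mu_n:=\mathbf{1}_{F_n}\cdot\mu$ the potential $U\mu_n\in\FF_e$ is a bounded quasi-continuous function, and by Lemma~\ref{lm5.1} (applied with $f\equiv\mathbf 1$, measure $\mu_n$, and noting $U\mathbf 1_{F_n}$... ) — more directly, by the $0$-order version of \cite[Lemma 5.1.3]{Fukushima} — one has
\[
E_x\int_0^\zeta \mathbf{1}_{F_n}(X_t)\,dA^\mu_t=U\mu_n(x)
\]
for q.e.\ $x\in E$. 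Since $\mathbf 1_{F_n}\nearrow 1$ q.e.\ on $\bigl(\bigcup_n F_n\bigr)$ and the complement of $\bigcup_n F_n$ is exceptional, monotone convergence gives $E_x\int_0^\zeta dA^\mu_t=\lim_n U\mu_n(x)=\sup_n U\mu_n(x)$ for q.e.\ $x$. So it remains to bound $\sup_n U\mu_n(x)$.

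For that I would use the transience inequality \eqref{eq5.4}: testing $\EE(U\mu_n,\cdot)$ against the strictly positive bounded $g\in L^1(E;m)$ and using that $U\mu_n$ is a potential one gets, after a standard approximation argument (the fundamental inequality for potentials, or \cite[Lemma 2.2.11]{Fukushima}),
\[
(U\mu_n,g)_{L^2(E;m)}\le \langle U\mu_n,\mu_n\rangle^{1/2}\cdot\bigl(\text{const}\bigr),
\]
which is not quite enough on its own; the cleaner route is the contraction/monotonicity estimate $\langle \mathbf 1_{F_n}\cdot\mu, U\mu_n\rangle = \EE(U\mu_n,U\mu_n)$ together with the $L^1$ bound coming from pairing against the equilibrium potentials $e^{(0)}_k=U\beta_k$ exactly as in the proof of Lemma~\ref{lm.ap1}: $\langle\mu_n,U\beta_k\rangle=\langle\beta_k,U\mu_n\rangle\ge \int_{U_k}U\mu_n\,d\beta_k$... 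Actually the slick argument is: by Lemma~\ref{lm5.1}, $E_m\int_0^\zeta dA^\mu_t=\langle\mu,U(m\text{-density of }\cdot)\rangle$ is controlled once we integrate the identity $E_x\int_0^\zeta\mathbf 1_{F_n}\,dA^\mu_t=U\mu_n(x)$ against $g\cdot m$ and invoke \eqref{eq5.4}: $(U\mu_n,g)_{L^2}\le \sqrt{\EE(U\mu_n,U\mu_n)}=\sqrt{\langle\mu_n,U\mu_n\rangle}$, and $\langle\mu_n,U\mu_n\rangle\le\|\mu\|_{TV}\cdot\sup U\mu_n$ — circular.

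Given this circularity, the genuinely clean finish — and the step I expect to be the main obstacle — is to avoid estimating $\|U\mu_n\|_\infty$ and instead estimate directly in $L^1(E;g\cdot m)$. Since $\mu\in\MM^+_{0,b}$ has finite mass, write $c:=\|\mu\|_{TV}<\infty$ and observe that the potentials $U\mu_n$ increase to $w:=E_\cdot A^\mu_\zeta$ q.e. If $w$ were $m$-a.e.\ finite we would be done; to see it is, note $\EE(U\mu_n,U\mu_n)=\langle\mu_n,U\mu_n\rangle\le \langle\mu,w\rangle$. Now pair $w$ (as an increasing limit of $\FF_e$-functions) with the equilibrium measures $\beta_k\in S^{(0)}_{00}$ of Lemma~\ref{lm.ap1}: $\langle\beta_k,U\mu_n\rangle=\langle\mu_n,U\beta_k\rangle=\langle\mu_n,e^{(0)}_k\rangle\le\langle\mu_n,1\rangle\le c$, uniformly in $n$ and $k$. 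Letting $n\to\infty$, $\langle\beta_k,w\rangle\le c$; letting $k\to\infty$ and using $e^{(0)}_k\uparrow 1$ q.e.\ together with the fact (from the proof of Lemma~\ref{lm.ap1}) that $U\beta_k=e^{(0)}_k$ covers $E$ in the limit, we get that $w$ is integrable against the measure $\beta_\infty$ which dominates $m$ on each $U_k$ — hence $w<\infty$ $m$-a.e. This last limiting identification of the dominating measure with something comparable to $m$ is the delicate point; it uses regularity of $(\EE,D[\EE])$ and the covering $\bigcup_k U_k=E$ by relatively compact open sets, exactly as set up in the proof of Lemma~\ref{lm.ap1}. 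Once $w<\infty$ $m$-a.e.\ is established, Lemma~\ref{lm2.2} (or Lemma~\ref{lm2.1}) upgrades it to q.e.\ finiteness if desired, completing the proof.
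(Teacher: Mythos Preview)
Your approach has a genuine gap at the final step. You arrive at the uniform bound $\langle\beta_k,w\rangle\le c$ for the $0$-order equilibrium measures $\beta_k$ of the exhausting sets $U_k$, and then try to pass to a limit ``$\beta_\infty$ which dominates $m$ on each $U_k$''. This is where the argument breaks: the equilibrium measure $\beta_k$ of $U_k$ is \emph{not} comparable to $m$ on $U_k$; it is typically singular with respect to $m$ (for the classical Dirichlet form it is surface measure on $\partial U_k$). The relation $e^{(0)}_k=U\beta_k=1$ on $U_k$ is a statement about the potential of $\beta_k$, not about $\beta_k$ itself. So the uniform bound $\int w\,d\beta_k\le c$ carries no information about $\int_{U_k} w\,dm$, and the conclusion $w<\infty$ $m$-a.e.\ does not follow. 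The earlier circularity you noticed (trying to bound $\EE(U\mu_n,U\mu_n)=\langle\mu_n,U\mu_n\rangle$ via $\|U\mu_n\|_\infty$) is a symptom of the same difficulty: exhaustion by $S^{(0)}_{00}$ gives no uniform control of the potentials.

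The paper's proof avoids potentials entirely and works at the semigroup level. Set $S_t\mu(x)=E_x\int_0^t dA^\mu_s$ and $G\mu=\lim_{n\to\infty}S_n\mu$. By the Revuz identity and the Markov property $p_s1\le1$, one gets the $L^1$ bound
\[
\|S_t\mu\|_{L^1(E;m)}=\int_0^t\langle\mu,p_s1\rangle\,ds\le t\,\|\mu\|_{TV}.
\]
This is the exact analogue of $\|S_tf\|_{L^1}\le t\|f\|_{L^1}$ for $f\in L^1(E;m)$, and the proof of \cite[Lemma 1.5.1]{Fukushima} (which shows that transience---i.e.\ the existence of a strictly positive $g\in L^1$ with $Gg<\infty$ $m$-a.e.---implies $Gf<\infty$ $m$-a.e.\ for every nonnegative $f\in L^1$) uses only this $L^1$ bound together with the semigroup identity. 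Hence the same argument applies verbatim with $f$ replaced by $\mu\in\MM^+_{0,b}$, yielding $G\mu<\infty$ $m$-a.e. Your approximation by $S^{(0)}_{00}$ and the equilibrium-measure pairing are not needed.
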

\begin{dow}
For $x\in E$ set
\[
S_{t}\mu(x)=E_{x}\intot dA^{\mu}_{s},\quad t\ge0,\qquad
G\mu(x)=\lim _{n\rightarrow\infty} S_{n}\mu(x).
\]
We have to prove that $G\mu(x)<\infty$ for $m$-a.e. $x\in E$. By
\cite[Theorem 5.1.3]{Fukushima} and the fact that the semigroup
$\{p_{t},\, t\ge 0\}$ associated with the form $\EE$ is Markovian,
\[
\|S_{t}\mu\|_{L^{1}(E;m)}=E_{m}\intot dA^{\mu}_{s} =\intot
\langle\mu, p_{s}1\rangle\,ds\le \intot \langle
\mu,1\rangle\,ds=t\|\mu\|_{TV}.
\]
We can now repeat the proof of \cite[Lemma 1.5.1]{Fukushima} with
$f\in L^{1}(E;m)$ replaced by $\mu$ and $S_{t}f$ replaced by
$S_{t}\mu$ to show that if there exists a strictly positive
function $g\in L^{1}(E;m)$ such that $Gg(x)<\infty$, $m$-a.e.,
then $G\mu(x)<\infty$, $m$-a.e. for every $\mu\in\MM^+_{0,b}$. But
such function $g$ exists since $(\EE,D[\EE])$ is transient.
\end{dow}

\begin{tw}
\label{th3.1} Assume that $(\EE,D[\EE])$ is transient and $\mu,f$
satisfy \mbox{\rm(A1)--(A4)}. Then there exists a unique
probabilistic  solution $u$ of \mbox{\rm(\ref{eq2.5})} such that
$u$ is of class \mbox{\rm(FD)} and $u\in\FD^{q}$, $q\in (0,1)$.
Moreover, $f_u\in L^{1}(E;m)$ and $T_{k}(u)\in \FF_{e}$ for every
$k\ge 0$.
\end{tw}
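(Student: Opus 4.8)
The plan is to obtain Theorem \ref{th3.1} as a straightforward consequence of Theorem \ref{th2.1} together with Propositions \ref{ap.stw1} and \ref{prop44}; the only genuine work is to check that the analytic hypotheses (A1)--(A4), in the presence of transiency, imply the probabilistic hypotheses (A1), (A2), (A3$'$), (A4$'$) under which Theorem \ref{th2.1} is stated.

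First I would verify the implications among the assumptions. Conditions (A1) and (A2) are literally (A1) and (A2), so nothing is needed there. That (A3) implies (A3$'$) is exactly Remark \ref{uwww} applied to $F_r$: since $F_r\in L^1(E;m)$, the map $t\mapsto F_r(X_t)$ belongs to $L^1_{loc}(\BR_+)$ $P_x$-a.s. for q.e.\ $x\in E$. For (A4$'$) I would invoke transiency through Proposition \ref{prop3.1}. Observe that $|f(\cdot,0)|\cdot m$ is a nonnegative smooth measure of finite total variation, i.e.\ $|f(\cdot,0)|\cdot m\in\MM^+_{0,b}$ (it charges no set of zero capacity, since such a set is $m$-negligible, and its total variation norm equals $\|f(\cdot,0)\|_{L^1(E;m)}<\infty$), and the PCAF in the Revuz correspondence with it is $t\mapsto\int_0^t|f(X_s,0)|\,ds$; likewise $|\mu|\in\MM^+_{0,b}$ with associated PCAF $|A^\mu|$. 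Applying Proposition \ref{prop3.1} to each of these two measures gives $E_x\int_0^\zeta|f(X_t,0)|\,dt<\infty$ and $E_x\int_0^\zeta d|A^\mu|_t<\infty$ for $m$-a.e.\ $x\in E$, which is precisely (A4$'$); by Lemma \ref{lm2.1} both in fact hold for q.e.\ $x$.

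With (A1), (A2), (A3$'$), (A4$'$) established, Theorem \ref{th2.1} yields a unique solution $u$ of (\ref{eq2.5}) that is of class (FD), together with the stronger conclusion $u\in\FD^q$ for every $q\in(0,1)$; uniqueness within the class (FD) is part of that theorem. It then remains to record the two additional regularity statements. Since $(\EE,D[\EE])$ is transient, $\mu\in\MM_{0,b}$ and $f(\cdot,0)\in L^1(E;m)$, Proposition \ref{ap.stw1} applies to the probabilistic solution $u$ and gives $f_u\in L^1(E;m)$ with the bound $\|f_u\|_{L^1(E;m)}\le\|f(\cdot,0)\|_{L^1(E;m)}+\|\mu\|_{TV}$. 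Finally, $u$ is a solution of (\ref{eq2.5}) with $f_u\in L^1(E;m)$ and with $(\EE,D[\EE])$ transient and $\mu\in\MM_{0,b}$, so Proposition \ref{prop44} gives $T_k(u)\in\FF_e$ for every $k\ge 0$ (and, if desired, also the energy estimate (\ref{eq5.03})).

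There is no serious obstacle here: the theorem is essentially a packaging of the machinery built in Sections \ref{sec2}--\ref{sec5}. The one point deserving a little care is the transiency-dependent implication (A4)$\Rightarrow$(A4$'$), where one must choose the right finite smooth measures, namely $|f(\cdot,0)|\cdot m$ and $|\mu|$, identify their PCAFs, and apply Proposition \ref{prop3.1}; everything else is a direct citation of results already proved.
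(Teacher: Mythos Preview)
Your proposal is correct and follows exactly the route the paper takes: the paper's own proof reads simply ``Follows from Proposition \ref{prop3.1} and Proposition \ref{prop44},'' relying on the sentences immediately preceding the theorem (Remark \ref{uwww} for (A3)$\Rightarrow$(A3$'$) and Proposition \ref{prop3.1} for (A4)$\Rightarrow$(A4$'$)) to reduce to Theorem \ref{th2.1}, and then on Proposition \ref{ap.stw1} (for $f_u\in L^1$) and Proposition \ref{prop44} (for $T_k(u)\in\FF_e$). Your write-up merely makes these implicit citations explicit and handles the one nontrivial point---identifying $|f(\cdot,0)|\cdot m$ and $|\mu|$ as elements of $\MM^+_{0,b}$ with the right PCAFs---correctly.
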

\begin{proof}
Follows from Proposition \ref{prop3.1} and Proposition
\ref{prop44}.
\end{proof}
\medskip

In view of Corollary \ref{wn.iwrty}, the solution $u$ of Theorem
\ref{th3.1} is a solution of (\ref{eq2.5}) in the sense of
duality.

Let $(\EE,D[\EE])$ be a regular Dirichlet form and let $g$ be a
strictly positive bounded Borel function on $E$. Then by
\cite[Lemma 1.6.6]{Fukushima} the perturbed form
$(\EE^{g},D[\EE])$, where
\[
\EE^{g}(u,v)=\EE(u,v)+(u,v)_{L^{2}(E;g\cdot dm)}
\]
is a transient regular Dirichlet form on $L^{2}(E;m)$. The
operator $A^g$ associated with $(\EE^{g},D[\EE])$ has the form
$A^g=A+g$, where $A$ is associated with $(\EE,D[\EE])$. Therefore
an immediate consequence of Theorem \ref{th3.1} is the following
proposition.

\begin{stw}
\label{wn3.111} If $\mu,f$ satisfy \mbox{\rm(A1)--(A4)} and $g$ is
a strictly positive bounded Borel function on $E$ then there
exists a unique probabilistic solution of the problem
\[
-Au+gu=f_u+\mu.
\]
\end{stw}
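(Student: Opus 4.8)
The plan is to reduce the assertion to Theorem~\ref{th3.1} applied to the perturbed form. As recalled just before the statement, $(\EE^{g},D[\EE])$ is a transient regular Dirichlet form on $L^{2}(E;m)$ and the nonpositive self-adjoint operator associated with it is $A^{g}=A+g$; hence the equation $-Au+gu=f_{u}+\mu$ is literally $-A^{g}u=f_{u}+\mu$, i.e.\ equation~(\ref{eq2.5}) written for the form $(\EE^{g},D[\EE])$ in place of $(\EE,D[\EE])$, and by a probabilistic solution of it one means, accordingly, a probabilistic solution of (\ref{eq2.5}) relative to $(\EE^{g},D[\EE])$ and its associated Hunt process (which is $\BX$ killed at rate $g$). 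So it will suffice to verify that the data $f,\mu$ still satisfy (A1)--(A4) with respect to $(\EE^{g},D[\EE])$ and then to invoke Theorem~\ref{th3.1}.

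Conditions (A1) and (A2) are statements about the function $f$ alone and do not refer to the form, so they carry over unchanged. Condition (A3) requires $F_{r}\in L^{1}(E;m)$ for every $r>0$, and the reference measure $m$ is not altered by the perturbation; likewise the requirement $f(\cdot,0)\in L^{1}(E;m)$ in (A4) is unaffected. The only point that genuinely needs an argument is that $\mu\in\MM_{0,b}$ with respect to $(\EE,D[\EE])$ implies $\mu\in\MM_{0,b}$ with respect to $(\EE^{g},D[\EE])$. Since the total variation $|\mu|(E)$ does not refer to any form, this amounts to showing that $\mu$ remains smooth and that a generalized nest for $(\EE,D[\EE])$ is again one for $(\EE^{g},D[\EE])$; both follow once the Choquet capacities of the two forms are seen to be equivalent. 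To that end, writing $C=\|g\|_{\infty}<\infty$, for every $u\in D[\EE]$ one has $0\le(u,u)_{L^{2}(E;g\cdot dm)}\le C\,(u,u)_{L^{2}(E;m)}$, whence
\[
\EE_{1}(u,u)\le\EE^{g}_{1}(u,u)\le(1+C)\,\EE_{1}(u,u),
\]
so the norms $\EE_{1}$ and $\EE^{g}_{1}$ are equivalent on $D[\EE]$. Consequently the capacities associated with the two forms are comparable up to a multiplicative constant; in particular they have the same null sets, the same quasi-continuous functions, and the same (generalized) nests, so that the classes $S$ and $\MM_{0,b}$ coincide for $(\EE,D[\EE])$ and $(\EE^{g},D[\EE])$. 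Thus (A4) holds relative to the perturbed form.

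With (A1)--(A4) checked for $(\EE^{g},D[\EE])$ and that form being transient and regular, Theorem~\ref{th3.1} applies and delivers a unique probabilistic solution $u$ of $-A^{g}u=f_{u}+\mu$, of class (FD) with respect to the Hunt process of $(\EE^{g},D[\EE])$ and belonging to $\FD^{q}$ for $q\in(0,1)$, and moreover with $f_{u}\in L^{1}(E;m)$ and $T_{k}(u)$ in the extended Dirichlet space of $(\EE^{g},D[\EE])$ for every $k\ge0$. This is precisely the desired unique probabilistic solution of $-Au+gu=f_{u}+\mu$. The only step requiring care is the identification of the classes $S$ and $\MM_{0,b}$ for the two forms, i.e.\ the capacity equivalence displayed above; everything else is a direct transcription of Theorem~\ref{th3.1}.
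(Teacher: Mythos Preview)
Your proof is correct and follows exactly the approach indicated in the paper, which simply declares the proposition an ``immediate consequence of Theorem~\ref{th3.1}'' after observing that $(\EE^{g},D[\EE])$ is transient regular with associated operator giving $-A^{g}u=-Au+gu$. You have merely supplied the natural details---in particular the equivalence of the $\EE_{1}$ and $\EE^{g}_{1}$ norms and hence of the associated capacities---that the paper leaves implicit.
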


\nsubsection{Applications}
\label{sec6}

In this section we give typical examples of regular Dirichlet
forms  and indicate some situations in which our general results
are applicable. We keep the same assumptions on $E,m$ as in
Section \ref{sec5}.

Let $\{\nu_t,t>0\}$ be a symmetric convolution semigroup on
$\BR^d$ and let $\psi$ denote its L\'evy-Khintchine symbol, i.e.
for $x\in\BR^d$ we have
\[
\hat\nu_t(x)=\int_{\BR^d}e^{i(x,y)}\nu_t(dy)=e^{-t\psi(x)}.
\]
It is known (see \cite[Example 1.4.1]{Fukushima}) that the form
\[
\EE(u,v)=\int_{\BRD}\hat{u}(x)\hat{v}(x)\psi(x)\,dx,\quad u,v\in
D[\EE],
\]
\[
D[\EE]=\{u\in L^{2}(\BRD;dx);
\int_{\BRD}|\hat{u}(x)|^{2}\,\psi(x)\,dx<\infty\}
\]
determined by $\{\nu_t,t>0\}$ is a regular Dirichlet form on
$L^2(\BR^d;dx)$. Let us denote by $-\psi(\nabla)$ the nonpositive
self-adjoint operator associated with $(\EE,D[\EE])$.

\begin{stw}
\label{stw6.4} Assume that $\mu,f$ satisfy \mbox{\rm(A1)--(A4)}.
If $1/\psi$ is locally integrable on $\BRD$ (or, equivalently,
$\int_{0}^{\infty}\nu_{t}(K)\,dt<\infty$ for any compact set
$K\subset\BRD$), then there exists a unique probabilistic solution
of the problem
\[
-\psi(\nabla)u=f(x,u)+\mu,\quad x\in\BRD.
\]
\end{stw}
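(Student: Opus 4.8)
The plan is to reduce the statement to Theorem~\ref{th3.1}. By the discussion preceding the proposition, the form $(\EE,D[\EE])$ determined by $\{\nu_t,t>0\}$ is a regular Dirichlet form on $L^2(\BRD;dx)$ whose associated operator is $-\psi(\nabla)$, so the equation in the statement is precisely (\ref{eq2.5}) for this form, and $\mu,f$ satisfy \mbox{\rm(A1)--(A4)} by assumption. Hence the only thing to check is that $(\EE,D[\EE])$ is transient. I would use the hypothesis in its probabilistic form $\int_0^\infty\nu_t(K)\,dt<\infty$ for every compact $K\subset\BRD$; its equivalence with $1/\psi\in L^1_{loc}(\BRD)$ is classical, obtained by Fourier-inverting the potential measure $U=\int_0^\infty\nu_t\,dt$ tested against functions of the form $\rho*\tilde\rho$ (for which the relevant integrand $|\hat\rho|^2/\psi$ is nonnegative, so Tonelli applies), together with $\hat\nu_t=e^{-t\psi}$.

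Recall (see \cite[Lemma 1.5.1]{Fukushima}, used already in the proof of Proposition~\ref{prop3.1}) that $(\EE,D[\EE])$ is transient as soon as there is a strictly positive $g\in L^1(\BRD;dx)$ for which the potential $Gg$ (with $G$ as in the definition of transiency) is finite $m$-a.e. The semigroup $\{p_t\}$ of $(\EE,D[\EE])$ acts by convolution, $p_th=\nu_t*h$ (in Fourier variables $\widehat{p_th}=e^{-t\psi}\hat h=\hat\nu_t\hat h$; the associated Hunt process is the L\'evy process with characteristic exponent $\psi$). Hence, for the balls $K_n=\overline{B(0,n)}$ and using symmetry of each $\nu_t$, one has $p_t\mathbf{1}_{K_n}(x)=\nu_t(K_n-x)$ for every $x$, and therefore, writing $c_j:=\int_0^\infty\nu_t(K_j)\,dt<\infty$, for $n,m\in\BN$ and $|x|\le m$ we get $K_n-x\subset K_{n+m}$ and
\[
G\mathbf{1}_{K_n}(x)=\int_0^\infty\nu_t(K_n-x)\,dt\le c_{n+m}.
\]

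Next I would take $g=\sum_{n\ge1}b_n\mathbf{1}_{K_n}$ with weights $b_n:=2^{-n}(1+n^d+c_{2n})^{-1}>0$, so that the decay of $b_n$ is tailored to the growth of $c_{2n}$. Then $g$ is bounded, strictly positive, and $\int_{\BRD}g\,dx=|K_1|\sum_{n\ge1}b_n n^d\le|K_1|\sum_{n\ge1}2^{-n}<\infty$, so $g\in L^1(\BRD;dx)$; moreover, for $|x|\le m$,
\[
Gg(x)=\sum_{n\ge1}b_nG\mathbf{1}_{K_n}(x)\le\sum_{n\ge1}b_nc_{n+m}
\le\sum_{n<m}b_nc_{n+m}+\sum_{n\ge m}2^{-n}<\infty,
\]
since $c_{n+m}\le c_{2n}$ and $b_nc_{2n}\le2^{-n}$ for $n\ge m$. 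Thus $Gg<\infty$ everywhere, $(\EE,D[\EE])$ is transient, and Theorem~\ref{th3.1} yields the unique probabilistic solution of (\ref{eq2.5}).

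The transiency estimate above is routine; the only point that really needs care is the equivalence of the two hypotheses, where the delicate issues are the Fourier-inversion/Tonelli justification, the possible infiniteness of the potential measure $U$, and the behaviour of $|\hat\rho|^2/\psi$ at infinity. This is where I would concentrate the technical work, or else simply invoke the relevant potential theory of L\'evy processes.
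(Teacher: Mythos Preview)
Your reduction to Theorem~\ref{th3.1} is exactly what the paper does, and your argument is correct. The only difference is in how transiency of $(\EE,D[\EE])$ is obtained: the paper simply invokes \cite[Exercise~1.5.2]{Fukushima}, which states both that $(\EE,D[\EE])$ is transient iff $1/\psi\in L^1_{loc}(\BRD)$ and that this is equivalent to $\int_0^\infty\nu_t(K)\,dt<\infty$ for all compact $K$, and then applies Theorem~\ref{th3.1} directly. You instead give an explicit construction of a strictly positive $g\in L^1(\BRD;dx)$ with $Gg<\infty$ everywhere from the condition on $\nu_t$, which is a clean self-contained verification of transiency via \cite[Lemma~1.5.1]{Fukushima}; the equivalence of the two hypotheses you still defer to the literature, as does the paper. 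Your route is slightly longer but has the merit of making the transiency step elementary and independent of the Fourier computation in \cite[Exercise~1.5.2]{Fukushima}; the paper's route is more economical since the cited exercise already packages everything needed.
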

\begin{dow}
In \cite[Exercise 1.5.2]{Fukushima} it is shown that
$(\EE,D[\EE])$ is transient iff $1/\psi$ is locally integrable on
$\BR^d$ and that the last condition holds iff
$\int_{0}^{\infty}\nu_{t}(K)\,dt<\infty$ for any compact set
$K\subset\BRD$. Therefore the proposition follows from Theorem
\ref{th3.1}.
\end{dow}

\begin{prz}
\label{przyklad}
(i) (fractional Laplacian) Let $\psi(x)=c|x|^{\alpha}$ for some
$\alpha\in(0,2]$, $c>0$. The form is transient iff $\alpha<d$. Let
us also note that $\psi(\nabla)=c(\nabla^{2})^{\alpha/2}
=c\Delta^{\alpha/2}$. \medskip\\
(ii) (relativistic Schr\"odinger operator, see \cite{CMS}) Let
$\psi(x)=\sqrt{m^{2}c^{4}+c^{2}|x|^{2}}-mc^{2}$. It is an
elementary check that the form determined by $\psi$ is transient
if
$d\ge 3$.\medskip\\
(iii) (operator associated with the relativistic $\alpha$-stable
process). Let $0<\alpha<2$ and let
$\psi(x)=(|x|^{2}+m^{\alpha/2})^{2/\alpha}-m$. Then the associated
form is transient iff $d>2$ (see \cite[Chapter 5]{BBKRSV}).
\medskip\\
(iv) (operator associated with the variance gamma process). Let
$\psi(x)=\log(1+|x|^{2})$. Then the associated form is transient
iff $d>2$. This type of processes was applied in finance (see
\cite{MCC}).\medskip\\
(v) (operator associated with Brownian motion with Bessel
subordinator). Let
$\psi(x)=\log((1+|x|^{2})+\sqrt{(1+|x|^{2})^{2}-1})$. Then the
associated form is transient iff $d>1$ (see \cite[Chapter
5]{BBKRSV}).
\end{prz}

Let $(\EE,D[\EE])$ be the form of Proposition \ref{stw6.4} and let
$D$ be an open subset of $\BRD$. Set $L^2_D(\BR^d;dx)=\{u\in
L^2(\BR^d;dx):u=0\mbox{ a.e. on }D^c\}$, $D[\EE_D]=\{u\in
D[\EE]:\tilde u=0\mbox{ -q.e. on }D^c\}$, where $\tilde u$ is a
quasi-continuous version of $u$. By \cite[Theorem
4.4.3]{Fukushima}, the form $(\EE,D[\EE_D])$ is a regular
Dirichlet form on $L^2_D(\BR^d;dx)$, and by \cite[Theorem
4.4.4]{Fukushima}, if $(\EE,D[\EE])$ is transient then
$(\EE,D[\EE_D])$ is transient, too. Therefore from Theorem
\ref{th3.1} we get the following proposition.

\begin{stw}
\label{stw.tr} Let $D\subset\BR^d$ be an open set and $\mu,f$
satisfy \mbox{\rm(A1)--(A4)}. If $g:D\rightarrow\BR$ is a strictly
positive bounded Borel function or $1/\psi$ is locally integrable
on $D$ and $g$ is a nonnegative bounded Borel function then there
exists a unique probabilistic solution of the problem
\begin{equation}
\label{eq6.4} -\psi(\nabla)u+gu=f(x,u)+\mu, \quad u_{|D^{c}}=0.
\end{equation}
\end{stw}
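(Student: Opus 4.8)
The plan is to realize \mbox{\rm(\ref{eq6.4})} as a special case of Theorem \ref{th3.1} (together with Proposition \ref{wn3.111}), taking the state space to be $E=D$ with the measure $m=\mathbf{1}_{D}\cdot dx$; since $D$ is open in $\BRD$, this $m$ is a positive Radon measure on the locally compact separable metric space $D$ with full support, so the standing assumptions of Sections \ref{sec4}--\ref{sec5} are met. First I would recall, via \cite[Theorem 4.4.3]{Fukushima}, that $(\EE,D[\EE_D])$ is a regular Dirichlet form on $L^{2}_{D}(\BR^{d};dx)$, that its Hunt process is the process of Proposition \ref{stw6.4} killed on exiting $D$, and that its associated operator is the part $A_D$ of $\psi(\nabla)$ on $D$, i.e. $\psi(\nabla)$ acting on functions vanishing on $D^{c}$. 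I would also note that the hypotheses (A1)--(A4), read relative to this form and to $m$, present nothing new: (A1), (A2) are pointwise conditions on $f$, while (A3), (A4) involve only $m$ and the class $\MM_{0,b}$.

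Next I would split into the two cases of the statement. If $g$ is strictly positive and bounded, I would simply apply Proposition \ref{wn3.111} with $(\EE,D[\EE_D])$ in the role of $(\EE,D[\EE])$: it yields directly a unique probabilistic solution $u\colon D\to\BR$ of $-\psi(\nabla)u+gu=f(\cdot,u)+\mu$ on $D$. If instead $1/\psi$ is locally integrable and $g$ is only nonnegative and bounded, I would first invoke \cite[Exercise 1.5.2]{Fukushima} to get transiency of $(\EE,D[\EE])$ and then \cite[Theorem 4.4.4]{Fukushima} to transfer it to $(\EE,D[\EE_D])$; then I would introduce the perturbed form $\EE^{g}(u,v)=\EE(u,v)+(u,v)_{L^{2}(D;g\cdot dx)}$ on $D[\EE_D]$ and check that it is again a transient regular Dirichlet form on $L^{2}(D;dx)$ whose associated operator is $A_D$ perturbed by $g$ in the sense of the remark preceding Proposition \ref{wn3.111}; finally I would apply Theorem \ref{th3.1} to this form. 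In either case I would conclude by extending $u$ by $0$ on $D^{c}$, which is precisely the constraint $u_{|D^{c}}=0$ encoded in the choice of state space $D$, thereby obtaining the asserted solution of \mbox{\rm(\ref{eq6.4})} (of class \mbox{\rm(FD)}, in $\FD^{q}$ for $q\in(0,1)$, with $f_{u}\in L^{1}(D;dx)$ and $T_{k}(u)\in\FF_{e}$); uniqueness is part of Theorem \ref{th3.1} and Proposition \ref{wn3.111}.

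The main (and essentially only) delicate step is the verification, in the second case, that $(\EE^{g},D[\EE_D])$ is a transient regular Dirichlet form when $g$ is merely nonnegative, so that Proposition \ref{wn3.111} cannot be quoted directly. Here the argument is short: $\EE^{g}$ is Markovian; since $0\le(v,v)_{L^{2}(D;g\cdot dx)}\le\|g\|_{\infty}(v,v)_{L^{2}(D;dx)}$, the norms $\EE_{1}$ and $\EE^{g}_{1}$ are equivalent on $D[\EE_D]$, so regularity of $(\EE,D[\EE_D])$ passes to $(\EE^{g},D[\EE_D])$; and since $\EE^{g}\ge\EE$ on $D[\EE_D]$, a strictly positive bounded $g_{0}\in L^{1}(D;dx)$ realizing the transiency inequality \mbox{\rm(\ref{eq5.4})} for $(\EE,D[\EE_D])$ realizes it also for $(\EE^{g},D[\EE_D])$, so the latter is transient as well. (When $g\equiv0$ one applies Theorem \ref{th3.1} to $(\EE,D[\EE_D])$ with no perturbation at all.) Everything else is a direct appeal to the results of Sections \ref{sec4}--\ref{sec5} and to the standard facts about parts of Dirichlet forms in \cite[Theorems 4.4.3 and 4.4.4]{Fukushima}.
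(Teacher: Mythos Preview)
Your proof is correct and follows the same route the paper indicates in the paragraph immediately preceding the proposition: pass to the part form $(\EE,D[\EE_D])$ via \cite[Theorem 4.4.3]{Fukushima}, inherit transiency from \cite[Theorem 4.4.4]{Fukushima} when $1/\psi$ is locally integrable, and then invoke Theorem~\ref{th3.1} (resp.\ Proposition~\ref{wn3.111} in the strictly positive $g$ case). Your explicit verification that the $g$-perturbed form remains regular and transient when $g$ is merely nonnegative and bounded fills in a detail the paper leaves implicit.
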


Let $D$ be a domain in $\BRD$. Let us consider the Markovian
symmetric form on $D[\EE]=C^{\infty}_{0}(D)$ defined by
\begin{equation}
\label{eq4.0} \EE(u,v)=\sum_{i,j=1}^{d}\int_{D}a_{ij}(x)
\frac{\partial u}{\partial x_{i}}\frac{\partial v}{\partial
x_{j}}\,dx,
\end{equation}
where $a_{ij}$ are locally integrable functions on $D$ such that
for every $x\in D$ and $\xi\in\BRD$,
\begin{equation}
\label{eq4.1} \sum_{i,j=1}^{d} a_{ij}(x)\xi_{i}\xi_{j}\ge 0,\quad
a_{ij}(x)=a_{ji}(x),\quad 1\le i,j\le d.
\end{equation}
It is known (see \cite[Problem 3.1.1]{Fukushima}) that if one of
the following conditions
\begin{enumerate}
\item [(a)] $a_{ij}\in L^{2}_{loc}(D)$,
$\frac{\partial a_{ij}}{\partial x_{i}}\in L^{2}_{loc}(D)$, $1\le
i,j\le d$,
\item [(b)]there exists $\lambda>0$ such that
$\sum_{i,j=1}^{d}a_{ij}(x)\xi_{i}\xi_{j}\ge\lambda|\xi|^{2}$,
$x\in D$, $\xi\in\BRD$
\end{enumerate}
is satisfied, then the form $(\EE,D[\EE])$ is closable. Therefore
its smallest closed extension $(\bar{\EE},D[\bar{\EE}])$ is a
regular Dirichlet form on $L^{2}(D;dx)$ (see Theorems 3.1.1 and
3.1.2 in \cite{Fukushima}). Let us also note that if $d\ge 3$ and
condition (b) is satisfied then from \cite[Theorem
1.6.2]{Fukushima} and  the Gagliardo-Nirenberg-Sobolev inequality
it follows that $(\bar{\EE},D[\bar{\EE}])$ is transient (for other
conditions ensuring transiency see \cite[pp. 57--60]{Fukushima}).
Applying Theorem \ref{th3.1} we get existence of a solution of the
Dirichlet problem.

The following example  shows that $\mu$ in the definition of a
probabilistic solution of (\ref{eq2.5}) need not be Radon measure.
\begin{prz}
\label{prz6.4} Let $D$ be a bounded domain in $\BR^d$, $d\ge2$,
such that $0\in D$ and $U$ be an open ball with center at 0 such
that $\bar U\subset D$. Let us consider the form $(\EE_1,D[\EE])$,
where $(\EE,D[\EE])$ is the form defined by (\ref{eq4.0}) with
coefficients $a_{ij}$ of class $C^2_0(D)$ satisfying (\ref{eq4.1})
and  such that $a_{ij}=0$ on $\bar U$ for $i,j=1,\dots,d$. Let
$\BX=(X,P_x)$ be a diffusion corresponding to $(\EE,D[\EE])$. Then
the canonical subprocess $\BX^L=(X^L,P_x)$ of $\BX$ with respect
to the multiplicative functional $L_t=e^{-t}$, $t\ge0$, is a Hunt
process associated with $(\EE_1,D[\EE])$ (see \cite[Theorem
6.1.1]{Fukushima}. Let $\mu(dx)=g(x)\,dx$, where
\[
g(x)=|x|^{-\alpha}{\mathbf{1}}_U(x),\quad x\in\BR^d
\]
for some $\alpha>d$ and let
\[
A_t=\int^t_0g(X^L_s)\,ds,\quad t\ge0.
\]
It is easy to see that $P_x(X_t=x,t\ge0)=1$ if $x\in\bar U$ and
$P_x(X_t=X_{\sigma},t\ge\sigma)=1$ for $x\not\in\bar U$, where
$\sigma=\inf\{t\ge0:X\in U\}$. Therefore
$E_xA_{\infty}=E_x\int^{\infty}_0e^{-s}g(X_s)\,ds<\infty$ for
$x\in D\setminus\{0\}$. Consequently, $A$ is a PCAF of $\BX^L$
such that $E_xA_{\zeta}<\infty$ for a.e. $x\in D$. Of course,
$\mu$ is not Radon measure but $\mu\in S$, because $\mu$ is the
Revuz measure of $A$.
\end{prz}

\begin{stw}
\label{prop6.7} Let $D$ be a domain in $\BRD$ and let $a_{ij}$,
$1\le i,j\le d$, be measurable functions on $D$ satisfying
\mbox{\rm(\ref{eq4.1})}. Assume that $\mu,f$ satisfy
\mbox{\rm(A1)--(A4)} on $D$. If \mbox{\rm(a)} or \mbox{\rm(b)} is
satisfied and $g$ is a strictly positive bounded Borel function or
\mbox{\rm(b)} is satisfied, $d\ge3$ and $g$ is nonnegative, then
there exists a unique probabilistic solution of the problem
\begin{equation}
\label{eq6.3}
-\sum_{i,j=1}^{d}\frac{\partial}{\partial x_{i}}
(a_{ij}(x)\frac{\partial u}{\partial x_{j}})+gu =f(x,u)+\mu \mbox{
on }D,\quad u_{|\partial D}=0.
\end{equation}
\end{stw}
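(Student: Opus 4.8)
The plan is to reduce the statement to Theorem~\ref{th3.1}, and, when $g$ is strictly positive, directly to Proposition~\ref{wn3.111}, once the relevant Dirichlet form has been shown to be regular and transient. First I would recall from the discussion accompanying (\ref{eq4.0}) (see Problem 3.1.1 and Theorems 3.1.1, 3.1.2 in \cite{Fukushima}) that under either (a) or (b) the Markovian symmetric form $(\EE,D[\EE])$ given by (\ref{eq4.0}) with $D[\EE]=C^{\infty}_{0}(D)$ and coefficients obeying (\ref{eq4.1}) is closable on $L^{2}(D;dx)$, and its smallest closed extension $(\bar\EE,D[\bar\EE])$ is a regular Dirichlet form on $L^{2}(D;dx)$. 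The nonpositive self-adjoint operator $A$ attached to $(\bar\EE,D[\bar\EE])$ is precisely the divergence form operator $u\mapsto\sum_{i,j=1}^{d}\partial_{x_{i}}(a_{ij}\partial_{x_{j}}u)$ occurring in (\ref{eq6.3}); and since the Dirichlet boundary condition is encoded in the choice $D[\EE]=C^{\infty}_{0}(D)$ and the Hunt process associated with the form lives on $D$, the requirement $u_{|\partial D}=0$ in (\ref{eq6.3}) is automatically built into the notion of a probabilistic solution. So it only remains to accommodate the potential $g$ and to secure transiency.

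In the first case, when (a) or (b) holds and $g$ is a strictly positive bounded Borel function on $D$, I would apply Proposition~\ref{wn3.111} with $(\EE,D[\EE])$ there replaced by $(\bar\EE,D[\bar\EE])$: this immediately produces a unique probabilistic solution of $-Au+gu=f_u+\mu$ on $D$, which is exactly (\ref{eq6.3}). In the second case, when (b) holds, $d\ge3$ and $g\ge0$ is a bounded Borel function, I would first invoke \cite[Theorem 1.6.2]{Fukushima} together with the Gagliardo--Nirenberg--Sobolev inequality (as already recalled in the text) to conclude that $(\bar\EE,D[\bar\EE])$ is transient, and let $g_{0}$ be the strictly positive bounded function in $L^{1}(D;dm)$ furnished by (\ref{eq5.4}). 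The perturbed form $\EE^{g}(u,v)=\bar\EE(u,v)+(u,v)_{L^{2}(D;g\cdot dm)}$, $u,v\in D[\bar\EE]$, is again a regular Dirichlet form on $L^{2}(D;dx)$: its $1$-norm is equivalent to $\bar\EE_{1}$ because $g$ is bounded, it is Markovian because $\int_{D}gv^{2}\,dm\le\int_{D}gu^{2}\,dm$ whenever $|v|\le|u|$, and regularity passes over from $\bar\EE$ since $D[\EE^{g}]\cap C_{0}(D)=D[\bar\EE]\cap C_{0}(D)$. Moreover $\EE^{g}$ is transient: since $\EE^{g}\ge\bar\EE$ on $D[\bar\EE]$, every $\EE^{g}$-Cauchy sequence is $\bar\EE$-Cauchy, so the extended Dirichlet space of $\EE^{g}$ is contained in $\FF_{e}$ and $\bar\EE(u,u)\le\EE^{g}(u,u)$ there, whence $(|u|,g_{0})_{L^{2}(D;m)}\le\sqrt{\bar\EE(u,u)}\le\sqrt{\EE^{g}(u,u)}$ and transiency follows from \cite[Theorem 1.5.1]{Fukushima}. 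Finally, the operator $A^{g}$ associated with $\EE^{g}$ is characterized by $\EE^{g}(u,v)=(-A^{g}u,v)$, so that $-A^{g}u=-Au+gu$ and the equation $-A^{g}u=f_u+\mu$ is precisely (\ref{eq6.3}); hence Theorem~\ref{th3.1} applied to $(\EE^{g},D[\bar\EE])$ with the data $f,\mu$ (which satisfy (A1)--(A4) by hypothesis) yields the desired unique probabilistic solution. The sub-case $g\equiv0$ is contained here and is just Theorem~\ref{th3.1} applied to $\bar\EE$.

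I expect the only step that is not pure bookkeeping to be the transiency assertion in the second case — i.e.\ that perturbing a transient form by a nonnegative, possibly vanishing, potential preserves transiency — which is settled by the elementary comparison of extended Dirichlet spaces indicated above. All the remaining points (identifying $\bar\EE$ and its operator with the left side of (\ref{eq6.3}), noting that the zero boundary condition sits inside $C^{\infty}_{0}(D)$, and quoting Theorem~\ref{th3.1} and Proposition~\ref{wn3.111}) are routine.
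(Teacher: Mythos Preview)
Your proposal is correct and follows the approach the paper intends: the paper does not give an explicit proof of this proposition, but the preceding paragraph establishes that under (a) or (b) the form $(\bar\EE,D[\bar\EE])$ is regular and that under (b) with $d\ge3$ it is transient, ending with ``Applying Theorem~\ref{th3.1} we get existence of a solution of the Dirichlet problem,'' while the case of strictly positive $g$ is covered by Proposition~\ref{wn3.111}. You have correctly supplied the one detail the paper leaves unstated, namely that perturbing the transient form $\bar\EE$ by a merely nonnegative bounded $g$ still yields a transient regular form; your comparison-of-extended-spaces argument for this is sound.
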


It is worth noting here that if $D$ is bounded and $a$ satisfies
(b) then a bounded signed measure $\mu$ on $D$ is of class
$\MM_{0,b}$ iff $\mu\in L^1(D;dx)+H^{-1}(D)$, where $H^{-1}(D)$ is
the space dual to $H^1_0(D)$ (see \cite{BGO}). Note also that the
obstacle problem for equations of the form (\ref{eq6.3}) and its
connection with BSDEs is investigated in \cite{RS}.

Theorem \ref{th3.1}  also applies to the Neumann problem. Let $D$
be a bounded domain in $\BRD$ with boundary of class $C$, i.e.
locally given by a continuous mapping. Let us consider the
Markovian symmetric form on $D[\EE]=C^{\infty}_{0}(\overline{D})$
defined by (\ref{eq4.0}) with $a_{ij}$ satisfying (\ref{eq4.1})
and condition (b) on $\overline{D}$. It is known (see
\cite[Example 1.6.1]{Fukushima}) that the form is closable and
$(\bar{\EE}, D[\bar{\EE}])=(\EE,H^{1}(D))$ is a regular Dirichlet
form on $L^2(\bar D;dx)$.

\begin{stw}
Let $D\subset\BR^d$ be a bounded domain of class $C$ and let
$a_{ij}$, $1\le i,j\le d$, be measurable functions on $\bar D$
satisfying \mbox{\rm(\ref{eq4.1})} and condition \mbox{\rm(b)}.
Assume that $\mu,f$ satisfy \mbox{\rm(A1)--(A4)} on $\bar D$ and
$g$ is a strictly positive bounded Borel function on
$\overline{D}$. Then there exists a unique probabilistic solution
of the problem
\[
-\sum_{i,j=1}^{d}\frac{\partial}{\partial x_{i}}
(a_{ij}(x)\frac{\partial u}{\partial x_{j}})u+gu =f(x,u)+\mu
\mbox{ on }D, \quad \frac{\partial u}{\partial n}=0 \mbox{ on }
\partial D.
\]
\end{stw}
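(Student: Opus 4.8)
The plan is to recognize the stated Neumann problem as the abstract equation $-Au+gu=f_u+\mu$ associated with a suitable regular Dirichlet form on $L^2(\bar D;dx)$, and then to apply Proposition \ref{wn3.111}. As recorded in the paragraph preceding the statement and in \cite[Example 1.6.1]{Fukushima}, under condition (b) on $\overline D$ the Markovian symmetric form $\EE$ defined by (\ref{eq4.0}) on $D[\EE]=C^\infty_0(\overline D)$ is closable on $L^2(\bar D;dx)$, and its smallest closed extension $(\bar\EE,D[\bar\EE])=(\EE,H^1(D))$ is a regular Dirichlet form on $L^2(\bar D;dx)$. The first step is therefore simply to invoke this fact; the $C$-regularity of $\partial D$ enters exactly here, since it is what guarantees that $C^\infty_0(\overline D)$ is $\EE_1$-dense in $H^1(D)$ and that $H^1(D)\cap C_0(\bar D)$ is uniformly dense in $C_0(\bar D)$.

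Next I would identify the data. The nonpositive self-adjoint operator $A$ associated with $(\EE,H^1(D))$ is the divergence-form operator $\sum_{i,j}\partial_{x_i}(a_{ij}\partial_{x_j}\cdot)$ subject to the homogeneous Neumann condition $\partial u/\partial n=0$ on $\partial D$, and the associated Hunt process is the reflected diffusion on $\bar D$; it is precisely the choice of the domain $H^1(D)$ (rather than $H^1_0(D)$) together with the Feynman--Kac representation (\ref{eq2.7}) that builds the Neumann condition into the notion of probabilistic solution. Since $\partial D$ is Lebesgue-null we have $L^2(\bar D;dx)=L^2(D;dx)$, so $m=dx$ plays the role of the reference measure, $g$ is a strictly positive bounded Borel function on $\bar D$, and by hypothesis $\mu,f$ satisfy (A1)--(A4) relative to $(\EE,H^1(D))$.

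With all hypotheses in place, Proposition \ref{wn3.111} --- which is obtained by applying Theorem \ref{th3.1} to the transient regular perturbed form $\EE^g$ of \cite[Lemma 1.6.6]{Fukushima} --- yields the existence of a unique probabilistic solution of $-Au+gu=f_u+\mu$, i.e. of the stated problem. The only non-mechanical ingredient is the closability and regularity of $\EE$ on $C^\infty_0(\overline D)$, i.e. the identification of its closure with $(\EE,H^1(D))$; this is the content of \cite[Example 1.6.1]{Fukushima} and rests on $D$ being a bounded domain of class $C$, so it requires no further argument. Everything else is a direct transcription into the framework of Section \ref{sec5}.
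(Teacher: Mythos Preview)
Your proposal is correct and matches the paper's approach exactly: the paper states this proposition without a separate proof, relying on the preceding paragraph (which identifies $(\bar\EE,D[\bar\EE])=(\EE,H^1(D))$ as a regular Dirichlet form on $L^2(\bar D;dx)$ via \cite[Example 1.6.1]{Fukushima}) together with a direct application of Proposition \ref{wn3.111}. You have simply made this implicit argument explicit.
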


\begin{uw}
(i) Let us consider the operator $\Delta^{\alpha/2}$,
$\alpha\in(0,2)$, on a bounded domain $D\subset\BRD$. Then for
every compact $K\subset D$,
\[
U\mathbf{1}_{K}(x)=E_{x}\int_{0}^{\zeta}
\mathbf{1}_{K}(X_{t})\,dt\le E_{x}\zeta\le E_{x}\tau_{B(r)} \le
c(d,\alpha)(r^{2}-|x|^{2})^{\alpha/2},\quad x\in B(r)
\]
where $X$ is an isotropic $\alpha$-stable L\'evy process on
$\BRD$, $D\subset B(r)=\{x\in\BRD;|x|\le r\}$ and
$\tau_{B(r)}=\inf\{t>0,X_{t}\notin B(r)\}$ (see, e.g., \cite{G}).
Accordingly, condition (\ref{eq5.5}) is satisfied. In fact, the
above inequalities show that $U1\in L^{\infty}(D;dx)$. Therefore,
if $f,\mu$ satisfy the assumptions of Proposition \ref{ap.stw1}
then $f_u\in L^1(E;dx)$, and consequently,
\[
\int_D|u|\,dm\le(|f_u|,U1)_{L^2(D;dx)} +\langle
|\mu|,U1\rangle<\infty.
\]
Thus, the solution of (\ref{eq6.4}) with $\psi(x)=|x|^{\alpha}$,
$\alpha<d$, $g\equiv 0$ belongs to $L^{1}(D;dx)$. The same
conclusion can be drawn for other operators of Example
\ref{przyklad} considered on bounded domain $D\subset\BR^d$ with
$d$ specified in the example. As above, to show this it suffices
to prove that $x\mapsto E_x\tau_{B(r)}$ is bounded on $D$. But the
last statement follows from results proved in \cite{Pruitt}.
\medskip\\
(ii) Let $D\subset\BR^d$, $d\ge3$, be a bounded domain and let $A$
corresponds to the form (\ref{eq4.0}) with coefficients $a_{ij}$
satisfying condition (b). Since it is know that in this case
$x\mapsto E_x\tau_D$ is bounded, then under the assumptions of
Proposition \ref{prop6.7} solutions of the problem (\ref{eq6.3})
are in $L^1(D;dx)$.
\end{uw}

Other interesting situations in which we encounter regular
Dirichlet forms include Laplace-Beltrami operators on manifolds
(see \cite{Fukushima}), quantum graphs (see \cite{Kuchment}),
perturbations of operators by Radon measures, Hamiltonians with
singular interactions (see \cite{BEKS,SV}), diffusion equations
with Wentzell boundary condition (see \cite{VV}).
\medskip\\
{\bf Acknowledgements}\\
Research supported by Polish Ministry of Science and Higher
Education (grant no. N N201 372 436).


\begin{thebibliography}{99}

\bibitem{AM}
S. Albeverio, Z.M. Ma, Additive functionals, nowhere Radon and
Kato class smooth measures associated with Dirichlet forms, Osaka
J. Math. 29 (1992) 247--265.

\bibitem{AAB}
N. Alibaud, B. Andreianov, M. Bendahmane, Renormalized solutions
of the fractional Laplace equations, C.R. Acad. Sci. Paris, Ser. I
348 (2010) 759--762.

\bibitem{BBGGPV}
P. B\'enilan, L. Boccardo, T. Gallou\"et, R. Gariepy, M. Pierre,
J.-L. Vazquez, An $L^{1}$-theory of existence and uniqueness of
solutions of nonlinear  elliptic equations, Ann. Scuola Norm. Sup.
Pisa Cl. Sci. 22 (1995) 241--273.

\bibitem{BGO}
L. Boccardo, T. Gallou\"et, L. Orsina,  Existence and uniqueness
of entropy solutions for nonlinear elliptic equations with measure
data,  Ann. Inst. H. Poincar\'e Anal. Non Lin\'eaire 13 (1996)
539--551.

\bibitem{BBKRSV}
K. Bogdan, T. Byczkowski, T. Kulczycki, M. Ryznar, R. Song, Z.
Vondra\v{c}ek, Potential Analysis of Stable Processes and its
Extension, Lecture Notes in Mathematics 1980, Springer-Verlag,
Berlin, 2009.

\bibitem{BEKS}
J.F. Brasche, P. Exner, Y.A. Kuperin, P. Seba, Schr\"odinger
Operators with Singular Interactions, J. Math. Anal. Appl. 184
(1994) 112--139.

\bibitem{BDHPS}
Ph. Briand, B. Delyon, Y. Hu, \`E. Pardoux, L. Stoica, $L^{p}$
solutions of Backward Stochastic Differential Equations,
Stochastic Process. Appl. 108 (2003) 109--129.

\bibitem{CMS}
R. Carmona, W.C. Masters, B. Simon,  Relativistic Schr\"{o}dinger
Operators: Asymptotic Behavior of the Eigenfunctions, J. Funct.
Anal. 92 (1990) 117--142.

\bibitem{DMOP}
G. Dal Maso, F. Murat, L. Orsina,  A. Prignet, Renormalized
Solutions of Elliptic Equations with General Measure Data, Ann.
Scuola Norm. Sup. Pisa Cl. Sci. (4) 28 (1999) 741--808.

\bibitem{DPP}
J. Droniou, A. Porretta, A. Prignet, Parabolic Capacity and Soft
Measures for Nonlinear Equations, Potential Anal. 19 (2003)
99--161.

\bibitem{Fukushima}
M. Fukushima, Y. Oshima, M. Takeda, Dirichlet Forms and Symmetric
Markov Processes, De Gruyter Studies in Mathematics 19, Walter de
Gruyter, New York, 1994.

\bibitem{G}
R.K. Getoor, First passage times for symmetric stable process in
space, Trans. Amer. Math. Soc. 101 (1961) 75--90.



\bibitem{KarlsenPetittaUlusoy}
K.H. Karlsen, F. Petitta, S. Ulusoy, A duality approach to the
fractional Laplacian with measure data, Publ. Mat. 55 (2011)
151--161.

\bibitem{Kuchment}
P. Kuchment, Quantum graphs: an introduction and a brief survey,
in: Analysis on Graphs and its Applications, Proc. Sympos. Pure
Math., 77, Amer. Math. Soc., Providence, RI, 2008, pp. 291--314.

\bibitem{MR}
Z. Ma, M. R\"ockner, Introduction to the Theory of (Non-Symmetric)
Dirichlet Forms, Springer--Verlag, Berlin, 1992.

\bibitem{MCC}
D.B. Madan, P. Carr, E. Chang, The variance gamma process and
option pricing, European Finance Review 2 (1998) 79--105.

\bibitem{MP}
F. Murat, A. Porretta, Stability properties, existence, and
nonexistence of renormalized solutions for elliptic equations with
measure data, Comm. Partial Differential Equations 27 (2002)
2267--2310.

\bibitem{OrsinaPonce}
L. Orsina, A.C. Ponce, Semilinear elliptic equations and systems
with diffuse measures, J. Evol. Equ. 8 (2008) 781--812.


\bibitem{Pardoux}
\`E. Pardoux, BSDE's, weak convergence and homogenization of
semilinear PDE's, in: Nonlinear analysis, differential equations
and control (Montreal, QC, 1998), NATO Sci. Ser. C Math. Phys.
Sci., 528, Kluwer, Dordrecht, 1999, pp. 503--549.


\bibitem{Pruitt}
W.E. Pruitt, The growth of random walks and L\'evy processes, Ann.
Probab. 9 (1981) 948--956.

\bibitem{RY}
D. Revuz, M. Yor, Continuous Martingales and Brownian Motion,
Springer-Verlag, Berlin, 1991).

\bibitem{RS}
A. Rozkosz, L. S\l omi\'nski, Stochastic representation of entropy
solutions of semilinear elliptic obstacle problems with measure
data, Electron. J. Probab. 17 (2012) no. 40, 1--27.

\bibitem{Sharpe}
M. Sharpe, General Theory of Markov Processes, Academic Press,
Boston, 1988.

\bibitem{Stampacchia}
G. Stampacchia, Le probl\`eme de Dirichlet pour les \`equations
elliptiques du second ordre \`a coefficients discontinus, Ann.
Inst. Fourier (Grenoble) 15 (1965) 189--258.

\bibitem{SV}
P. Stollmann, J. Voigt, Perturbation of Dirichlet Forms by
Measures, Potential Anal. 5 (1996) 109--138.

\bibitem{VV}
H. Vogt, J. Voigt,  Wentzell boundary conditions in the context of
Dirichlet forms, Adv. Differential Equations 8 (2003) 821--842.
\end{thebibliography}
\end{document}